\tikzstyle{block} = [rectangle, draw, fill=white!50,
\tikzstyle{line} = [draw, -latex']
\tikzstyle{arrow} = [thick,->,>=stealth]
\DeclareMathOperator*{\argmin}{arg\,min}
\DeclareMathOperator*{\argmax}{arg\,max}
\numberwithin{equation}{section}
\newtheorem{theorem}{Theorem}[section]
\newtheorem{definition}{Definition}[section]
\newtheorem{proposition}{Proposition}[section]
\newtheorem{lemma}{Lemma}[section]
\newtheorem{corollary}{Corollary}[section]
\begin{document}
	
	\title{Calm local optimality for couple-constrained minimax problems 
		\thanks{The alphabetical order of the paper indicates the equal contribution to the paper.}
		}
	\author{Xiaoxiao Ma\thanks{National Supercomputing Center in Shenzhen, Shenzhen, Guangdong 518055, China. Email: xxma.research@outlook.com.}, Jane Ye\thanks{Corresponding author. Department of Mathematics and Statistics, University of Victoria, Victoria, B.C., Canada V8W 2Y2.  The research of this author was partially supported by NSERC. Email: janeye@uvic.ca.}  }
	\date{}
	\maketitle
	\begin{abstract}
	Recently, a new local optimality concept for minimax problems, termed calm local minimax points, has been introduced.
	In this paper, we extend this concept to a general class of nonsmooth, nonconvex–nonconcave minimax problems with coupled constraints, where the inner feasible set depends on the outer variable. We derive comprehensive first-order and second-order necessary and sufficient optimality conditions for calm local minimax points in the setting of nonsmooth, nonconvex–nonconcave minimax problems with coupled constraints. Furthermore, we show how these conditions apply to problems with set constraints, as well as those involving systems of inequalities and equalities. By unifying existing formulations that often rely on stronger assumptions within the framework of calm local minimax points, we show that our results hold under weaker assumptions than those previously required.

		\vskip 10 true pt
		\noindent {\bf Key words.}\quad Minimax problem, Coupled constraints, Local optimality, Calmness, First-order optimality condition, Second-order optimality condition
		\vskip 10 true pt
		
		\noindent {\bf AMS subject classification:} {90C26, 90C46, 49J52, 49J53}.
		
	\end{abstract}
	\newpage

\section{Introduction}
We consider the following minimax problem:
\begin{equation}\label{minimax}\tag{Min-Max}
	\min_{x\in X} \max_{y\in Y(x)} f(x,y),
\end{equation}
where the objective function $f:\mathbb{R}^n\times\mathbb{R}^m\rightarrow\mathbb{R}$ is possibly nonsmooth, and the sets $X \subseteq\mathbb{R}^n$, $Y(x)\subseteq\mathbb{R}^m$ (for each $x \in X$) are nonempty, closed, but may be nonconvex. Throughout the paper, we assume that for each $x \in X$, the solution set of the inner maximization problem $\max _{y^{\prime} \in Y(x)} f(x, y^{\prime})$ is nonempty.

The minimax problem with  coupled constraints \eqref{minimax} exhibits marked differences from the simple minimax problem where the feasible set {of $y$} is independent of variable $x$:
As shown in \cite{Tsaknakis2021} for the case of linear coupled constraints, such problems may violate the classical max–min inequality and can be NP-hard, even when the objective is strongly convex–strongly concave and $Y(x)$ contains only a linear inequality constraint. Minimax problems with  coupled-constraints arise in various applications, including adversarial training \cite{Aleksander18,Tsaknakis2021} and generative adversarial networks \cite{Gauthier2019,Ian2014}. For example, \cite{LuMei2023,Tsaknakis2021} illustrate how such problems can be applied to model resource allocation and network flow scenarios in the presence of adversarial attacks. Further applications, along with algorithmic and theoretical developments, are discussed in \cite{DaiWa2022,DaiZh-2020,DaiZh-2022,Goktas2022,Goktas2023,GoktasZhao2022,LuMei2023,Tsaknakis2021,ZhangWang2022}.

A point $(\bar x,\bar y) \in X\times Y(\bar x)$  is said to be a global minimax point  of problem (Min-Max) if for any $x \in X, y\in  Y(\bar x)$,
$$f(\bar x, y) \leq f(\bar x,\bar y) \leq \max _{y^{\prime} \in Y(x)} f(x, y^{\prime}).$$
However in optimization,  a global optimal solution is hard to find and in practice one usually try to find stationary points or local optimal solutions as local surrogates for global optimal solutions. 
To address this difficulty, Jin et al. \cite{JNJ20}  introduced the notion of local minimax points for unconstrained minimax problems.  The notion was extended to the  couple-constrained case by Dai and Zhang \cite{DaiZh-2020} as follows.  A
  point $(\bar x,\bar y) \in X\times Y(\bar x)$  is said to be a local minimax point  of problem (Min-Max) if there exists a $\delta_0>0 $ and a radius function $\tau:\mathbb{R}_+\rightarrow \mathbb{R}_+$ satisfying $\tau (\delta) \rightarrow 0$ as $\delta \downarrow 0$, such that for any $\delta\in (0,\delta_0]$ and any $x\in X \cap \mathbb{B}_\delta(\bar x)$, $y\in Y( \bar x) \cap \mathbb{B}_\delta(\bar y)$ we have
$$f(\bar x, y) \leq f(\bar x,\bar y) \leq \max _{y^{\prime} \in Y(x)\cap \mathbb{B}_{\tau(\delta)}(\bar y)} f(x, y^{\prime}).$$Subsequent works \cite{DaiZh-2020, JC22, JNJ20, Zhang2022} have further explored properties and optimality conditions for local minimax problems, and \cite{Xu2023} studied related bilevel problems.

More recently, Ma et al. \cite{Ma2023}  demonstrated that, to explicitly characterize local minimax behavior, it is essential to study a special subclass termed calm local minimax points which is a special local minimax point where   the radius function $\tau$  is calm, i.e., there is $\kappa>0 $ such that $\tau(\delta)\leq \kappa \delta$,
 and presented a detailed analysis of first- and second-order optimality conditions for the   simple minimax problem.


In this paper, we show that calm local minimax points play a central role in analyzing local optimality of minimax problems, as many existing results in fact implicitly rely on this subclass rather than on general local minimax points. This highlights the necessity of a deeper study of calm local minimax points. However, existing research on (calm) local minimax points either assumes smooth data or restricts attention to the simple minimax problem. Extending the analysis to the couple-constrained case \eqref{minimax} is nontrivial, due to its intrinsic differences. 

In this paper we derive second-order optimality conditions for problem (Min-Max) only under the assumption of {twice  semidifferentiability} for the objective function $f$ and the semidifferentiability {and the calmness} of the set-valued map $Y(x)$. 

In practice, constraint sets $X$ and $Y(x)$ are usually defined by a sysem of smooth equality and inequality constraints. For this special but important case we derive our optimality conditions in terms of Lagrange functions.  To illustrate our results in the smooth equality and inequality constrainted case, for simplicity, consider the special case where
\begin{eqnarray*}
X:= \{ x\in \mathbb{R}^n | \phi(x)\leq 0\},\quad
Y(x):= \{ y\in \mathbb{R}^m | \varphi (x,y)\leq 0\},
\end{eqnarray*}
where $f:\mathbb{R}^n\times\mathbb{R}^m\rightarrow\mathbb{R}$, $\phi:\mathbb{R}^n\rightarrow\mathbb{R}^p $ and $\varphi:\mathbb{R}^n\times\mathbb{R}^m\rightarrow\mathbb{R}^q$ are twice continuously differentiable. Denote the Lagrangian function of the minimax problem by
\begin{eqnarray*}
L(x,y,\alpha,\beta) := f(x,y) + \phi(x)^T\alpha - \varphi(x,y)^T\beta.
\end{eqnarray*}
 Let $(\bar x,\bar y)$ be a calm local minimax point and suppose  that Mangasarian-Fromovitz constraint qualification (MFCQ) holds for the system $\varphi(\bar x, y)\leq 0$ at $\bar y$ and metric subregulariy constraint qualification (MSCQ) holds for the sytem $\phi(x)\leq 0$ at $\bar x$, then there exists multipliers $\alpha, \beta$ such that the following dual first-order optimality  condition holds:
\begin{equation}\label{multipliersetminimax} \nabla_{(x,y)} L(\bar x,\bar y,\alpha,\beta)=0, 0\leq -\phi(\bar x) \perp \alpha \geq 0,  0\leq -\varphi ( \bar x, \bar y) \perp \beta \geq 0.\end{equation}
To our knowledge,  there is no dual first-order necessary optimality condition existed in such generality. In \cite[Theorem 4.1]{Ma2023}, only the first-order optimality conditions in primary form for the simple minimax problem was given. Note that in the case where linear independence constraint qualification (LICQ) instead of MFCQ holds for the sysem $\varphi(\bar x, y)\leq 0$ at $\bar y$, the multipliers $\beta $ are unique and the condition is the same as the one derived in 
 \cite[Theorems 3.1]{DaiZh-2020} under the Jacobian uniqueness condition which essentially implies that the solution map of the inner maximization problem has a single-valued  twice continuously differentiable localization.
 To state the  second-order optimality condition we derived, define
  the critical cone for the outer minimization problem as  $$ C_{\min}(\bar x,\bar y)
	=\{ u|\nabla \phi_i(\bar x)^T u \leq 0, i\in I_\phi(\bar x), \sup\limits_{h \in \mathbb{L}(\bar x,\bar y;u)} \nabla f(\bar x,\bar y)^T(u,h) =  0\},$$
	where  $\mathbb{L}(\bar x,\bar y;u) :=\{h'|\nabla \varphi_i(\bar x,\bar y)^T (u,h') \leq 0, i \in I_{\varphi}(\bar x,\bar y)\}.$
 Then for any critical direction $u \in C_{\min}(\bar x,\bar y)$, there exist $h$ 
  in the set  $$ C(\bar x,\bar y;u)= 
	 \{h'| \nabla \varphi_i(\bar x,\bar y)^T (u,h') \leq 0, i \in I_{\varphi}(\bar x,\bar y), \nabla f(\bar x,\bar y)^T (u,h') = 0 \}
$$
 and  $(\alpha,\beta)$ which is a multiplier  of the minimax problem satisfying (\ref{multipliersetminimax}) such that the second-order necessary optimality condition holds
\begin{equation}\label{withhnew}
\nabla^2_{(x,y)} L(\bar x,\bar y,\alpha,\beta)((u,h),(u,h))  \geq 0.
\end{equation}
We have also shown that under a strong second-order optimality condition for the inner maximization problem, the above necessary optimality condition becomes a suffficent one when the inequality in  (\ref{withhnew}) is changed to a strict inequality.
Under some extra conditions we also give an exact and explicit form of the left-hand side of \eqref{withhnew}, thereby yielding both second-order necessary and sufficient optimality conditions with explicitly defined $h$. This improves upon \cite[Corollary 5.1]{Ma2023}, where only an upper bound of the left-hand side of \eqref{withhnew} is obtained by relaxing the feasibility of $h$, and thus no explicit sufficient condition with explicitly defined $h$  was  derived.

	The main contributions of this paper are as follows:
	\begin{itemize}
		\item We develop a comprehensive framework of first- and second-order optimality conditions for calm local minimax points in the couple-constrained setting~\eqref{minimax} under very weak and general assumptions. This extends and even improves existing results, which have largely been limited to smooth data or the simple constraints form. Moreover, we provide a constructive characterization of the existence direction $h$ in the second-order optimality conditions, thereby offering a clear and explicit representation rather than a purely existential statement. This explicit form reveals the intrinsic structure underlying the optimality conditions.
		
		\item We develop a refined sensitivity analysis of value functions under general constraint systems. In particular, we derive a detailed first-order sensitivity analysis of the localized value function, overcoming technical difficulties caused by nonsmoothness and localization constraints, and establish its role as a foundation for deriving necessary and sufficient optimality conditions.
		
		\item We unify several existing approaches to optimality conditions for local minimax points within the framework of calm local minimax points. Our analysis demonstrates that many formulations—often derived under stronger assumptions such as the Jacobian uniqueness condition—can be interpreted as special cases of this framework. This highlights calm local minimax points as the natural and essential concept for a complete theory of local optimality in minimax problems.
\end{itemize}

The remainder of this paper is organized as follows. In Section~\ref{4.2}, we present preliminaries from variational analysis along with some preliminary results. Section~\ref{section-points} introduces the notion of calm local minimax points for couple-constrained minimax problems and gives several equivalent characterizations. In Section~\ref{nonsmooth-section}, we establish first- and second-order necessary and sufficient optimality conditions for calm local minimax points under coupled constraints. Section~\ref{setconstrained} extends the analysis to problems with set constraints, as well as inequalities and equalities systems, and compares the results with existing ones. Conclusions are given in Section \ref{concl}.

\section{Notations and Preliminaries}\label{4.2}
In this section, we will introduce variational analysis tools that are essential for our analysis of local optimality in the minimax problem.

{\bf Notations:} We denote by $\mathbb{R}_+^r (\mathbb{R}_-^r)$ the nonnegative (nonpositive) orthant. For any $z \in \mathbb{R}^r$, $\|z\|$ denotes its Euclidean norm. For $z \in \mathbb{R}^r$ and $\epsilon>0$, we denote by $\mathbb{B}_\epsilon(z):=\{z' \,| \,\|z'-z\|\leq \epsilon\}$ the closed ball centered at $z$ with radius $\epsilon$ and by $\mathbb{B}$ the closed unit ball. For any two vectors $a, b$ in $\mathbb{R}^r$, we denote by $\langle a, b\rangle$ the inner product. For any $z \in \mathbb{R}^r$ and $S \subseteq \mathbb{R}^r$, $\mathrm{dist}(z,S):=\inf_{z'\in S}\|z-z'\|$. For a set $S \subseteq \mathbb{R}^r$,  the indicator function is defined by {$\delta_S( z)= 0 $  { if }  $z \in S$ and $\delta_S( z)= \infty $ \text { otherwise},  and $S^{\perp}:=\{\alpha \in \mathbb{R}^r | \langle \alpha, z\rangle=0, \forall z \in S\}$ denotes the orthogonal complement. For a set  $S\subseteq \mathbb{R}^r$, a point $\bar z \in \mathbb{R}^r$, and a sequence $z_k$, the notation $z_k \stackrel{S}{\rightarrow} \bar{z}$ means that the sequence $z_k\in S$ goes to $\bar z$. The notation $l(t) = o(t)$ means $l(t)/t \to 0$ as $ t \downarrow 0$. For a set-valued mapping $\Gamma: \mathbb{R}^n \rightrightarrows \mathbb{R}^m$, ${\rm gph\ }\Gamma:=\{(x,y) \in \mathbb{R}^n \times \mathbb{R}^m | x \in \mathbb{R}^n, y\in \Gamma(x)\}$ denotes the graph of $\Gamma$.  For a single-valued map $\Phi: \mathbb{R}^r \rightarrow \mathbb{R}$, we denote by $\nabla \Phi(z) \in \mathbb{R}^{r}$ and $\nabla^2 \Phi(z) \in \mathbb{R}^{r \times r}$ the gradient vector of $\Phi$ at $z$ and the Hessian matrix of $\Phi$ at $z$, respectively. {If $ f=(f_1,\dots, f_m): \mathbb{R}^n \rightarrow \mathbb{R}^m$ is a vector function that is twice differentiable at $\bar z \in \mathbb{R}^n$,  we denote by $\nabla f(\bar z) \in \mathbb{R}^{m\times n}$ its Jacobian and $\nabla^2 f(\bar z)$ its second derivative. Throughout the paper, the notation $\nabla^2 f(\bar z)(w,v)$ means that
		$$\nabla^2 f(\bar z)(w,v):=(w^T\nabla^2 f_1(\bar z)v, \dots, w^T \nabla^2 f_m(\bar z)v) \mbox{ for all } v, w \in \mathbb{R}^n. $$  } For a matrix $A \in \mathbb{R}^{n \times m}, A^T$ is its transpose and rank$\{A\}$ denotes its rank. For a symmetric matrix $A \in \mathbb{R}^{r \times r},$ $A\prec 0\ $ means that the matrix $A$ is a negative definite matrix and $A^{-1}$ is the inverse matrix.

Consider  a set-valued map $\Gamma: \mathbb{R}^n \rightrightarrows \mathbb{R}^m$. The Painlevé-Kuratowski outer limit and inner limit of $\Gamma$ with respect to a set $S$ at $\bar z$ is defined by
$$
\limsup _{z \overset{S}{\to} \bar z} \Gamma(z):=\left\{v \in \mathbb{R}^m \mid \exists  z_k \overset{S}{\to} \bar z, v_k \rightarrow v \text { s.t. } v_k \in \Gamma\left(z_k\right) \text { for each } k\right\},
$$
$$
\liminf _{z \overset{S}{\to} \bar z} \Gamma(z):=\left\{v \in \mathbb{R}^m \mid \forall z_k \overset{S}{\to} \bar z, \exists v_k \rightarrow v \text { s.t. } v_k \in \Gamma\left(z_k\right) \text { for each } k\right\},
$$
respectively.

\begin{definition}[tangent and normal cones \cite{BonSh00,RoWe98}]
Given  $S\subseteq\mathbb{R}^r, \bar z\in S$, the tangent/contingent cone and the inner tangent cone to $S$ at $\bar z$ are defined by 
\begin{eqnarray*}
 T_S(\bar z)&:=&
  \big\{w \in\mathbb{R}^r \, \big| \, \exists \ t_k\downarrow 0,\; w_k\to w \ \ {\rm with}
 \ \ \bar z+t_k w_k\in S \big\},\\
 T^i_S(\bar z)&:=&
  \big\{w \in\mathbb{R}^r \, \big| \, \forall \ t_k\downarrow 0,\; \exists w_k\to w \ \ {\rm with}
 \ \ \bar z+t_k w_k\in S \big\},
 \end{eqnarray*}
respectively.
 The regular/Fr\'echet normal cone 
 and the limiting/Mordukhovich normal cone to $S$ at $\bar z$ are given, respectively, by
 \begin{eqnarray*}
 \widehat{N}_S(\bar z)&:=&\left\{z^*\in \mathbb{R}^r \, \big| \, \langle z^*, z-\bar z\rangle \le
 o\big(\|z-\bar z\|\big) \ \forall z\in S\right\},\\
 N_S(\bar z)&:=&
 \left \{z^* \in \mathbb{R}^r \, \Big| \, \exists \ z_k \overset{S}{\to} \bar z, z^*_{k}\rightarrow z^* \ {\rm with }\  z^*_{k}\in \widehat{N}_{S}(z_k) \right \}.
\end{eqnarray*}
\end{definition}

The regular normal cone to $S$ at $\bar{z}$ \cite[Proposition 6.5]{RoWe98} can also be characterized by
\begin{equation}\label{normalcone-equ}
\widehat{N}_{S}(\bar{z}):=\left\{z^* \in \mathbb{R}^{r} \mid \langle z^*, w\rangle \leq 0 \quad \forall w\in T_S(\bar z)\right\} = T_S(\bar z)^{\circ}.
\end{equation}
For a closed set $S$, one always has $\widehat{N}_{S}(\bar{z}) \subseteq {N}_{S}(\bar{z})$, where the two cones agree and reduce to the normal cone of convex analysis if $S$ is convex. 
A set $S \subseteq \mathbb{R}^r$ is said to be geometrically derivable at $\bar z \in S$ if 
$T_S(\bar z) = \lim_{t \downarrow 0} \frac{S - \bar z}{t},$
or equivalently, if $T_S(\bar z) = T_S^i(\bar z).$
 Convex sets are geometrically derivable.

It is well-known that in the convex case, the normal cone and the tangent cone are polar to each other.

\begin{proposition} [Tangent-Normal Polarity] \cite[Theorem 6.28]{RoWe98}\label{prop_polar} For a closed convex set $S$ and $\bar{z} \in S$, one has
$$
T_{S}(\bar{z})=N_{S}(\bar{z})^{\circ}, \quad T_{S}(\bar{z})^{\circ}=N_{S}(\bar{z}) .
$$
\end{proposition}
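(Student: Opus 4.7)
The plan is to deduce both polarity identities from facts already at hand in the excerpt: the characterization \reff{normalcone-equ} of the regular normal cone as $T_S(\bar z)^\circ$, the coincidence $\widehat N_S(\bar z) = N_S(\bar z)$ for closed convex sets noted in the paragraph immediately preceding the proposition, and the classical bipolar theorem for closed convex cones. The structure is to establish the second identity $T_S(\bar z)^\circ = N_S(\bar z)$ first, then invoke bipolarity to recover the first identity $T_S(\bar z) = N_S(\bar z)^\circ$.

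For the second identity, \reff{normalcone-equ} gives $\widehat N_S(\bar z) = T_S(\bar z)^\circ$ with no convexity hypothesis needed. Since $S$ is closed and convex, the regular and limiting normal cones agree, i.e.\ $\widehat N_S(\bar z) = N_S(\bar z)$, so composing the two equalities yields $T_S(\bar z)^\circ = N_S(\bar z)$. For the first identity, the bipolar theorem asserts $(K^\circ)^\circ = K$ for every closed convex cone $K$; applied to $K = T_S(\bar z)$, this produces
$$T_S(\bar z) = (T_S(\bar z)^\circ)^\circ = N_S(\bar z)^\circ,$$
using the identity just established. This step requires verifying that $T_S(\bar z)$ is a closed convex cone: closedness follows from the sequential definition by a diagonal extraction, conicity from rescaling $t_k$, and convexity from the observation that for $w_1, w_2 \in T_S(\bar z)$ realized along sequences $\bar z + t_k w_k^i \in S$ (aligned to a common parameter $t_k$), the points $\bar z + t_k(\lambda w_k^1 + (1-\lambda)w_k^2) = \lambda(\bar z + t_k w_k^1) + (1-\lambda)(\bar z + t_k w_k^2)$ lie in $S$ by its convexity.

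The main obstacle is the technical alignment needed to put two tangent-realizing sequences onto a common parameter scale $t_k$ before forming the convex combination. This is handled by a standard reparametrization argument, or, more transparently, by noting that in the convex setting $T_S(\bar z)$ coincides with the closure of the feasible-direction cone $\bigcup_{t>0} t^{-1}(S - \bar z)$, whose convexity is inherited directly from $S$. Once the closedness and convexity of $T_S(\bar z)$ are in hand, the bipolar theorem, itself resting on the separating-hyperplane theorem, can be invoked as a black box to close the argument.
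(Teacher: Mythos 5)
Your argument is correct, but note that the paper does not prove this proposition at all: it is quoted verbatim from \cite[Theorem 6.28]{RoWe98}, so there is no in-paper proof to compare against. What you supply is the standard textbook derivation: the polarity $\widehat N_S(\bar z)=T_S(\bar z)^{\circ}$ from \reff{normalcone-equ} (valid without convexity), the coincidence $\widehat N_S(\bar z)=N_S(\bar z)$ for closed convex $S$ (also only cited in the paper, from the convex-analysis description of the normal cone, so no circularity), and then the bipolar theorem applied to $T_S(\bar z)$. The only point needing care is exactly the one you flag, convexity of $T_S(\bar z)$; your resolution via $T_S(\bar z)=\operatorname{cl}\bigcup_{t>0}t^{-1}(S-\bar z)$ is sound, and in fact for convex $S$ the sets $t^{-1}(S-\bar z)$ are nested increasing as $t\downarrow 0$ (since $\bar z\in S$), which both proves convexity of the union and makes your original common-parameter alignment unnecessary. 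So the proposal is a valid self-contained proof of a result the paper treats as a black-box citation; the citation buys brevity, your derivation buys a complete argument from facts already displayed in Section 2.
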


In particular, the set $T_{S}(\bar{z})$ is closed and convex when $S$ is closed.

\begin{definition}[graphical derivatives]\cite[Definition 8.33]{RoWe98}\label{def2.2} Consider a mapping $Y: \mathbb{R}^n \rightrightarrows \mathbb{R}^m$ and a point $\bar{x} \in \operatorname{dom} Y$. The graphical derivative of $Y$ at $\bar{x}$ for any $\bar{y} \in Y(\bar{x})$ is the mapping $D Y(\bar{x},\bar{y}): \mathbb{R}^n \rightrightarrows \mathbb{R}^m$ defined by
$$
h \in D Y(\bar{x},\bar{y})(u) \quad\Longleftrightarrow \quad (u, h) \in T_{\operatorname{gph} Y}(\bar{x}, \bar{y}).
$$
\end{definition}

Using the Painlevé-Kuratowski outer limit, the graphical derivative defined in Definition~\ref{def2.2} can be expressed as
\begin{equation}\label{Graphical}
D Y(\bar{x},\bar{y})(u) = \limsup_{t \downarrow 0, u' \to u} \frac{Y(\bar x+tu')-\bar y}{t} \quad \forall u\in\mathbb{R}^n.
\end{equation}

\begin{definition}[semidifferentiability of set-valued mappings]\cite[page 332]{RoWe98}\label{semid-set}
Consider  a set-valued map $Y: \mathbb{R}^n \rightrightarrows \mathbb{R}^m$ and $(\bar x,\bar y) \in  \operatorname{gph}Y$. The limit
$$\lim_{t \downarrow 0, u' \to u} \frac{Y(\bar x+tu')-\bar y}{t},$$
if it exists, is the semiderivative at $\bar x$ for $\bar y$ and $u$. If it exists for every vector $u \in \mathbb{R}^n$, then $Y$ is semidifferentiable at $\bar x$ for $\bar y$.
\end{definition}

\begin{definition}[Lipschitz-like property and calmness of set-valued mappings]\label{lip-calm}
Consider  a set-valued map $Y: \mathbb{R}^n \rightrightarrows \mathbb{R}^m$ and $(\bar x,\bar y) \in  \operatorname{gph}Y$. We say that $Y$ is Lipschitz-like or pseudo Lipschitz continuous or satisfies Aubin property around $(\bar x,\bar y)$ \cite[Definition 1]{Aubin84},  \cite{DoRo2009}, if
there exists a constant $l>0$ and neighborhoods $U$ of $\bar x$ and $V$ of  $\bar y$ such that
$$Y(x) \cap V \subseteq Y(x')+l\|x-x'\|\mathbb{B} \quad \forall x,x'\in U.$$

We say that $Y$ is calm (or pseudo upper-Lipschitz continuous) around $(\bar x,\bar y)$ \cite[Definition 2.8]{YeYe1997}, if
there exists a constant $l>0$ and neighborhoods $U$ of $\bar x$ and $V$ of  $\bar y$ such that
$$Y(x) \cap V \subseteq \bar y+l\|x-\bar x\|\mathbb{B} \quad \forall x\in U.$$
\end{definition}

It is straightforward to verify that the Lipschitz-like property implies the calmness property for a set-valued mapping.

\begin{definition}[metric subregularity constraint qualification]
Consider the constraint system
\begin{equation}\label{constraints} S=\{z \in \mathbb{R}^r|g(z)\in \Sigma\},
\end{equation}
where $g:\mathbb{R}^r \rightarrow \mathbb{R}^{q}$ and  $\Sigma \subseteq \mathbb{R}^q$  is closed. Let $\bar z\in S$ where $S$ is the constraint system defined by (\ref{constraints}). We say that the metric subregularity constraint qualification (MSCQ)  for  $S$ holds at  $\bar z$ if there exists a neighborhood $U$ of $\bar z$ and a constant $\rho>0$ such that
$${\rm dist}(z,S) \leq \rho\ {\rm dist}(g(z),\Sigma) \qquad \forall z\in U.$$
\end{definition}
Sufficient conditions for MSCQ of the inequalities and equalities system can be found in \cite[Theorem 7.4]{YeZhou18}, e.g., the first-order sufficient condition for metric subregularity (FOSCMS), the second-order sufficient condition for metric subregularity (SOSCMS), the Mangasarian-Fromovitz constraint qualification (MFCQ), and the linear constraint qualification, i.e., $g$ is affine and $\Sigma$ is the union of finitely many polyhedral convex sets.

The following discussions are important for deriving the first-order sufficient optimality conditions. 
\begin{proposition}\label{prop2.1}
Consider  a set-valued map $Y: \mathbb{R}^n \rightrightarrows \mathbb{R}^m$ and $(\bar x,\bar y) \in  \operatorname{gph}Y$. Then (i) $\Leftrightarrow$ (ii). If we further assume that $Y$ is calm around $(\bar x,\bar y)$, then (ii) $\Rightarrow$ (iii) (and thus (i) $\Rightarrow$ (iii)).
\begin{itemize}
\item[(i)] $Y$ is semidifferentiable at $\bar x$ for $\bar y$.
\item[(ii)] For any $u \in \mathbb{R}^n$,
$$D Y(\bar x,\bar y)(u) =  \liminf_{t\downarrow 0,u' \to u} \frac{Y(\bar x+tu')-\bar y}{t} \neq \emptyset.$$
\item[(iii)] There exists $\kappa>0$ such that for any $u \in \mathbb{R}^n$, $h \in D Y(\bar x,\bar y)(u)$, $t_k \downarrow 0, u_k \to u$, there exists a sequence $y_k \in Y(\bar x+t_ku_k)$ such that $(y_k-\bar y)/t_k \to h$ as $k \to \infty$ and $\|y_k-\bar y\| \leq \kappa \|x_k-\bar x\|$ with $x_k:=\bar x+t_ku_k$ for sufficiently large $k$.
\end{itemize}
\end{proposition}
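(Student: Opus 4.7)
The plan is to split the proposition into two parts: first the equivalence (i) $\Leftrightarrow$ (ii), which amounts to bookkeeping with Painlev\'e--Kuratowski limits, and then the derivation of (iii) from (ii) using the calmness hypothesis, after which (i) $\Rightarrow$ (iii) follows by concatenation.

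For the equivalence, I would invoke the identity \eqref{Graphical} expressing $DY(\bar x,\bar y)(u)$ as the outer limit $\limsup_{t\downarrow 0, u'\to u}[Y(\bar x+tu')-\bar y]/t$, together with the standard fact that a Painlev\'e--Kuratowski set-valued limit exists precisely when the outer and inner limits coincide. Semidifferentiability as in Definition~\ref{semid-set} is existence of this limit for every $u$, which by the identity is exactly the condition $DY(\bar x,\bar y)(u) = \liminf_{t\downarrow 0,u'\to u}[Y(\bar x+tu')-\bar y]/t$ for all $u$. The nonemptiness of the common value transfers in both directions, so (i) $\Leftrightarrow$ (ii) reduces to this observation.

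For (ii) $\Rightarrow$ (iii), fix the calmness data from Definition~\ref{lip-calm}: a constant $l>0$ and neighborhoods $U$ of $\bar x$ and $V$ of $\bar y$ with $Y(x)\cap V \subseteq \bar y + l\|x-\bar x\|\mathbb{B}$ for all $x\in U$. Given $u\in\mathbb{R}^n$, $h\in DY(\bar x,\bar y)(u)$, and sequences $t_k\downarrow 0$, $u_k\to u$, assumption (ii) lets me view $h$ as an element of the inner limit, and by definition of $\liminf$ this produces a sequence $h_k\to h$ with $\bar y+t_k h_k\in Y(\bar x+t_k u_k)$ for each $k$. Setting $y_k := \bar y+t_k h_k$ and $x_k := \bar x+t_k u_k$ gives $y_k\in Y(x_k)$ and $(y_k-\bar y)/t_k=h_k\to h$. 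Since $t_k\downarrow 0$ and $(h_k)$ is bounded, $y_k\to\bar y$; combined with $x_k\to\bar x$, eventually $x_k\in U$ and $y_k\in V$, so calmness yields $\|y_k-\bar y\|\le l\|x_k-\bar x\|$ for sufficiently large $k$. Taking $\kappa := l$ gives (iii).

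The main subtlety is uniformity of $\kappa$: because $l$ and the neighborhoods $U,V$ are intrinsic to $Y$ at $(\bar x,\bar y)$, the constant is automatically independent of $u$, $h$, $(t_k)$, and $(u_k)$, as required. A small degenerate case worth noting is $u=0$ with $u_k\equiv 0$: then $x_k=\bar x$ forces $y_k=\bar y$ through calmness, but the same reasoning applied on the liminf side shows that under (ii) one has $DY(\bar x,\bar y)(0)\subseteq\{0\}$, so necessarily $h=0$ and the choice $y_k:=\bar y$ trivially satisfies all three requirements.
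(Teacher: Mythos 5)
Your proof is correct and follows essentially the same route as the paper: the equivalence (i) $\Leftrightarrow$ (ii) is read off from \eqref{Graphical} and the coincidence of outer and inner Painlev\'e--Kuratowski limits, and (ii) $\Rightarrow$ (iii) is obtained by extracting $h_k\to h$ with $\bar y+t_kh_k\in Y(\bar x+t_ku_k)$ from the inner limit, noting $y_k\to\bar y$, and applying calmness to get the uniform bound $\|y_k-\bar y\|\le\kappa\|x_k-\bar x\|$ for large $k$. Your extra remarks on the uniformity of $\kappa$ and the degenerate case $u=0$ are harmless additions not present in (and not needed beyond) the paper's argument.
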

\begin{proof}
By the definition of the Painlevé-Kuratowski inner limit, together with \eqref{Graphical} and Definition~\ref{semid-set}, the statement that $Y$ is semidifferentiable at $\bar{x}$ for $\bar{y}$ is equivalent to condition~(ii).
We next prove that condition~(ii) combined with the calmness of $Y$ yields condition~(iii). Condition (ii) states that for any $u \in \mathbb{R}^n$, $h \in D Y(\bar x,\bar y)(u)$, $t_k \downarrow 0, u_k \to u$, there exists a sequence $h_k \to h$ such that $y_k:=\bar y+t_kh_k \in Y(\bar x+t_ku_k)$ for any $k$. Clearly, $y_k \to \bar y$ as $k$ goes to infinity. By the definition of the calmness, see Definition \ref{lip-calm}, there exists $\kappa>0$ such that $\|y_k-\bar y\| \leq \kappa \|x_k-\bar x\|$ for sufficiently large $k$.
\end{proof}

\begin{definition}[Robinson stability] \cite[Definition 1.1]{Gfrerer2017}
Let $Y(x)$ be a set-valued map defined by $Y(x):= \{ y\in \mathbb{R}^m | \varphi (x,y) \in D\}$ where $\varphi:\mathbb{R}^n \times \mathbb{R}^m  \to \mathbb{R}^q$ and $D\subseteq \mathbb{R}^{q}$ is closed. We say $Y$ satisfies the Robinson stability (RS) property at $(\bar x,\bar y) \in \operatorname{gph}{Y}$ with modulus $\kappa \geq 0$ if there are neighborhoods $U$ of $\bar x$ and $V$ of  $\bar y$ such that
$$\operatorname{dist}(y; Y(x)) \leq \kappa \operatorname{dist}(\varphi(x,y);D) \quad \forall (x,y)\in U\times V.$$
\end{definition}
RS property is also called R-regulariy in e.g. \cite[Definition 2.6]{Bednarczuk2020}.
It is obvious that $Y$ satisfies the Robinson stability (RS) property at $(\bar x,\bar y)$ means that for each fixed $ x$ in   a neighborhood of $\bar x$, MSCQ holds for the system $Y(x)$ at $\bar y$ and the modulus is uniform  for all $x$ in the neighborhood.
\begin{proposition}[sufficient conditions for the Robinson stability]\label{prop2.3}
Let $Y(x):= \{ y\in \mathbb{R}^m | \varphi (x,y) \in D\}$ where $D$ is closed and $\varphi$ be differentiable in $y$ and with continuous partial derivative $\nabla_y \varphi$. Given $(\bar x,\bar y) \in \operatorname{gph}{Y}$. If one of the following properties holds,
\begin{itemize}
\item[(i)] $D=\mathbb{R}^{q_1}_- \times \{0\}^{q_2}$ and MFCQ holds for system $\varphi(\bar x,y )\in D$   at   $\bar y$, i.e., the vectors $\nabla_y \varphi_i(\bar x,\bar y), i=1,...,q_2$ are linearly independent and also there exists $w \in \mathbb{R}^m$ such that $\nabla_y \varphi_i(\bar x,\bar y)w=0,i=1,...,q_2$ and $\nabla_y \varphi_i(\bar x,\bar y)w<0, i\in I(\bar x,\bar y):=\{i=1,..,q_1 | \varphi_i(\bar x,\bar y)=0\}$, 
\item[(ii)] $\varphi(x, y)=a(x) + By + c$, where $a: \mathbb{R}^n \to \mathbb{R}^q$ is continuous, $B \in \mathbb{R}^{q \times m}$ and $c \in \mathbb{R}^p$, $D$ is the union of finitely many convex polyhedral sets, and the feasible region $Y(x)$ is nonempty near $\bar x$,
{\item[(iii)]  $D=\mathbb{R}^{q_1}_- \times \{0\}^{q_2}$ and $Y$ satisfies the relaxed constant rank constraint qualification (RCRCQ) at $(\bar x,\bar y)$, i.e.,  for any index set $K \subseteq I(\bar x,\bar y)$,
$$rank\{\nabla_y \varphi_i(x,y): i \in I_0 \cup K\} = rank \{\nabla_y \varphi_i (\bar x,\bar y): i \in I_0 \cup K\}$$
for all $(x,y)$ in a neighbourhood of $(\bar x,\bar y)$,}
\end{itemize}  where $I(\bar x,\bar y):=\{i=1,..,q_1 | \varphi_i(\bar x,\bar y)=0\}, I_0 = \{j | j = 1,2,...,q_2\}.$
Then $Y$ satisfies the RS property at $(\bar x,\bar y)$. 
\end{proposition}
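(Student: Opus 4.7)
The plan is to address the three cases (i)--(iii) separately, since each relies on a different classical result from variational analysis concerning metric regularity or subregularity, and in each case the RS property amounts to showing that the error bound for the inner inclusion $\varphi(x,\cdot)\in D$ holds with a modulus that is uniform in $x$ near $\bar x$. In all three cases the strategy is the same: establish a pointwise estimate for fixed $x$ via a known constraint qualification, then observe that the quantities governing the modulus are continuous (or constant) in $x$, so the modulus can be chosen independently of $x$ on a small neighborhood of $\bar x$.

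For case (i), I would first invoke Robinson's classical theorem: MFCQ for $\varphi(\bar x,\cdot)\in D$ at $\bar y$ is equivalent to metric regularity of the single-variable mapping $y\mapsto \varphi(\bar x,y)-D$ at $(\bar y,0)$. Because $\varphi$ and $\nabla_y\varphi$ are continuous at $(\bar x,\bar y)$, the MFCQ conditions (linear independence of the equality gradients and existence of a strict Mangasarian--Fromovitz direction) persist at nearby $(x,y)$, with a uniform lower bound on the strict-feasibility direction. The quantitative version of Robinson's theorem then yields a common modulus $\kappa$ on a uniform neighborhood, which is precisely the RS estimate. I would cite \cite{Gfrerer2017} for the sharp form.

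For case (ii), the key observation is that for each fixed $x$ the set $\{y:\varphi(x,y)\in D\}=\{y:By\in D-a(x)-c\}$ is the preimage of a finite union of convex polyhedra under a fixed linear map $y\mapsto By$. Robinson's theorem on piecewise polyhedral multifunctions (or Hoffman's lemma applied to each polyhedral piece, combined with continuity of the indicator of the selected piece) gives a globally upper-Lipschitzian estimate with a constant that depends only on $B$ and $D$ and not on the right-hand side. Pushing the perturbation through $a(x)$ and using continuity of $a$ together with nonemptiness of $Y(x)$ near $\bar x$ yields the required uniform estimate.

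For case (iii), I would appeal to the known fact that RCRCQ implies metric subregularity of the constraint mapping; the extra content is that the subregularity modulus can be chosen uniformly under perturbations of the base point, which is essentially the definition of RS. The crucial stability is provided by RCRCQ itself: since the rank of the relevant gradient families is constant on a neighborhood of $(\bar x,\bar y)$, one can carry out the standard RCRCQ-based proof of metric subregularity (via Carath\'eodory-type selection of active gradients and the rank theorem) at every nearby base point, obtaining a common modulus. The main obstacle I anticipate is in this last case: unlike MFCQ, RCRCQ is only a qualification for metric subregularity rather than metric regularity, so one cannot simply invoke perturbation stability of metric regularity; one must use the uniform-in-perturbation version of the RCRCQ-implies-subregularity argument, which is where I would invoke the corresponding results from the literature on constant-rank CQs rather than reprove them from scratch.
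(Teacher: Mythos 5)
Your proposal is correct and follows essentially the same route as the paper: in each of the three cases the paper simply invokes the corresponding known result (Gfrerer--Mordukhovich for the MFCQ case together with the MFCQ/metric-regularity equivalence, Robinson's polyhedral-multifunction theory via Bai--Ye for the linear case, and Bednarczuk--Minchenko--Rutkowski for the RCRCQ case), which is exactly the literature your sketch defers to. The additional detail you supply on why the moduli can be chosen uniformly in $x$ is the standard mechanism underlying those cited results and does not change the approach.
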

\begin{proof}
Statement~(i) follows from \cite[Corollary 3.7]{Gfrerer2017} and the equivalence between the MFCQ and metric regularity for systems of inequalities and equalities systems; see, e.g., \cite[Theorem 4.1]{Dontchev2006}. Statement~(ii) is taken from \cite[Proposition 3.2]{BaiYe2023}. Statement~(iii) follows from \cite[Theorem 4.2]{Bednarczuk2020}.
\end{proof}

More sufficient conditions for the RS property can be found in \cite{Bednarczuk2020,Mehlitz2022,Minchenko2011}.

Next, we show that the Robinson stability is a sufficient condition for the semidifferentiability of a set-valued mapping.

\begin{proposition}\label{prop2.2}
Let $Y(x):= \{ y\in \mathbb{R}^m | \varphi (x,y) \in D\}$, $\varphi$ be differentiable, and $D$ be geometrically derivable. Suppose that $Y$ satisfies the Robinson stability (RS) property at $(\bar x,\bar y) \in \operatorname{gph}{Y}$ with modulus $\kappa \geq 0$.
 Then, $Y$ is semidifferentiable at $\bar x$ for $\bar y$ and Lipschitz-like around $(\bar x,\bar y)$.
\end{proposition}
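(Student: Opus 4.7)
My plan is to derive both conclusions directly from the Robinson stability (RS) inequality, combined with the differentiability of $\varphi$ and the geometric derivability of $D$. The Lipschitz-like property will follow from a short distance estimate, while semidifferentiability will be reduced, via Proposition~\ref{prop2.1}(i)$\Leftrightarrow$(ii), to showing that every element of the graphical derivative $DY(\bar x,\bar y)(u)$ is realized by the inner limit $\liminf_{t\downarrow 0,u'\to u}(Y(\bar x+tu')-\bar y)/t$; the construction of the realizing sequence is where geometric derivability of $D$ enters crucially.

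For the Lipschitz-like property, I would shrink the RS neighborhoods if necessary so that $\varphi$ is locally Lipschitz on $U\times V$ with some constant $L$ (using that $\varphi$ is differentiable on a neighborhood of $(\bar x,\bar y)$ with locally bounded derivatives). For any $x,x'\in U$ and $y\in Y(x)\cap V$, the inclusion $\varphi(x,y)\in D$ gives $\operatorname{dist}(\varphi(x',y),D)\le \|\varphi(x',y)-\varphi(x,y)\|\le L\|x-x'\|$, and the RS inequality then yields $\operatorname{dist}(y,Y(x'))\le \kappa L\|x-x'\|$, which is the Aubin property with modulus $\kappa L$ and in particular implies that $Y$ is calm at $(\bar x,\bar y)$.

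For semidifferentiability, since calmness of $Y$ has just been established, Proposition~\ref{prop2.1} reduces the task to verifying the inclusion $DY(\bar x,\bar y)(u)\subseteq \liminf_{t\downarrow 0,u'\to u}(Y(\bar x+tu')-\bar y)/t$ for every $u$ (the reverse inclusion is automatic). Fix $u$ and $h\in DY(\bar x,\bar y)(u)$, i.e.\ $(u,h)\in T_{\operatorname{gph} Y}(\bar x,\bar y)$. Unwinding the outer-limit definition of the tangent cone and applying a first-order Taylor expansion of $\varphi$, one checks that $d:=\nabla\varphi(\bar x,\bar y)(u,h)$ lies in $T_D(\varphi(\bar x,\bar y))$. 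Given arbitrary sequences $t_k\downarrow 0$ and $u_k\to u$, geometric derivability of $D$ produces $d_k\to d$ with $\varphi(\bar x,\bar y)+t_k d_k\in D$ along the prescribed $t_k$. With $y_k':=\bar y+t_k h$, the Taylor expansion of $\varphi$ and the convergence $\nabla\varphi(\bar x,\bar y)(u_k,h)-d_k\to 0$ give
$$\operatorname{dist}\bigl(\varphi(\bar x+t_k u_k,y_k'),D\bigr)\le t_k\|\nabla\varphi(\bar x,\bar y)(u_k,h)-d_k\|+o(t_k)=o(t_k),$$
and RS then delivers $y_k\in Y(\bar x+t_k u_k)$ with $\|y_k-y_k'\|\le \kappa\operatorname{dist}(\varphi(\bar x+t_k u_k,y_k'),D)=o(t_k)$, so that $h_k:=(y_k-\bar y)/t_k\to h$ realizes $h$ in the inner limit.

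The main subtlety I expect is the careful combination of the $o(t_k)$ remainder from the Taylor expansion with the $o(1)$ gap $\nabla\varphi(\bar x,\bar y)(u_k,h)-d_k$ obtained from geometric derivability. The key point is that geometric derivability allows one to choose $d_k$ along the \emph{prescribed} sequence $t_k$, rather than only along some subsequence extracted from the outer-limit description of $T_D(\varphi(\bar x,\bar y))$; this is precisely what lets RS convert the distance-to-$D$ estimate into the distance-to-$Y(\bar x+t_k u_k)$ estimate required by the inner-limit formulation of semidifferentiability.
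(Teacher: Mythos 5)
Your proof is correct and follows essentially the same route as the paper: geometric derivability of $D$ converts membership in $T_D(\varphi(\bar x,\bar y))$ into an $o(t)$ estimate for $\operatorname{dist}(\varphi(\bar x+tu',\bar y+th),D)$ along every sequence $t_k\downarrow 0$, and Robinson stability then transfers this to $\operatorname{dist}(\bar y+t_kh, Y(\bar x+t_ku_k))=o(t_k)$, realizing $h$ in the inner limit. The only (harmless) differences are that you prove the Aubin property directly from the RS inequality and handle the perturbation $u_k\to u$ head-on, whereas the paper cites \cite[Theorem 5.1]{Gfrerer2017} for Lipschitz-likeness and uses it to reduce to the fixed direction $u$, and invokes \cite[Proposition 4.2]{ABM21} for the identity $DY(\bar x,\bar y)(u)=\mathbb{L}_{\operatorname{gph}Y}(\bar x,\bar y;u)$ where you use only the elementary inclusion $DY(\bar x,\bar y)(u)\subseteq\mathbb{L}_{\operatorname{gph}Y}(\bar x,\bar y;u)$ obtained from a Taylor expansion.
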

\begin{proof}
Since $\operatorname{dist}((x,y); \operatorname{gph}Y) \leq \operatorname{dist}(y; Y(x))$, the RS property implies that MSCQ holds for the system $\varphi(x,y)\in D$ at $(\bar x,\bar y)$. By \cite[Proposition 4.2]{ABM21},
$$\mathbb{L}_{\operatorname{gph} Y}(\bar x,\bar y;u) = D Y(\bar x,\bar y)(u),$$
where
$$\mathbb{L}_{\operatorname{gph} Y}(\bar x,\bar y;u):=\{h|\nabla \varphi(\bar x,\bar y)(u,h) \in T_D(\varphi(\bar x,\bar y))\}.$$

By Proposition \ref{prop2.1} (ii) and the fact \eqref{Graphical}, we need to show that $$D Y(\bar x,\bar y)(u) \subseteq \liminf_{t\downarrow 0,u' \to u} \frac{Y(\bar x+tu')-\bar y}{t}.$$ Under the assumed RS property, $Y$ is Lipschitz-like around $(\bar x,\bar y)$ \cite[Theorem 5.1]{Gfrerer2017}, and thus, $\liminf_{t\downarrow 0,u' \to u} \frac{Y(\bar x+tu')-\bar y}{t}= \liminf_{t\downarrow 0} \frac{Y(\bar x+tu)-\bar y}{t} $. Thus, we only need to prove that $$\mathbb{L}_{\operatorname{gph} Y}(\bar x,\bar y;u) \subseteq \liminf_{t\downarrow 0} \frac{Y(\bar x+tu)-\bar y}{t}.$$

Let $h \in \mathbb{L}_{\operatorname{gph} Y}(\bar x,\bar y;u)$. Then, by the definition of $\mathbb{L}_{\operatorname{gph} Y}(\bar x,\bar y;u)$ and the assumed geometric derivability, we have
$$
\nabla \varphi(\bar x,\bar y)(u,h) \in T_D(\varphi(\bar x,\bar y)) = T^i_D(\varphi(\bar x,\bar y)).
$$

By the definition of inner tangent sets, we have
$$
\operatorname{dist}\left(\varphi(\bar x,\bar y)+t \nabla \varphi(\bar x,\bar y)(u,h), D\right)=o(t) \quad \forall t \geq 0 .
$$

Thus, it follows from the Robinson stability and the above equality that
$$
\begin{aligned}
& \quad \operatorname{dist}\left(\bar y+th, Y\left(\bar x+tu\right)\right) \\
& \leq \kappa \operatorname{dist}\left(\varphi\left(\bar x+tu, \bar y+th\right), D\right) \\
& =\kappa \operatorname{dist}\left(\varphi(\bar x,\bar y)+t \nabla \varphi(\bar x,\bar y)(u,h) +o(t), D\right) \\
& =\kappa \operatorname{dist}\left(\varphi(\bar x,\bar y)+t \nabla \varphi(\bar x,\bar y)(u,h), D\right) +o(t) \\
& =o(t),
\end{aligned}
$$
where $\kappa \geq 0$, which implies that $h \in \liminf_{t\downarrow 0} \frac{Y(\bar x+tu)-\bar y}{t}$. Thus, we have
$$
\mathbb{L}_{\operatorname{gph} Y}(\bar x,\bar y;u) \subseteq \liminf_{t\downarrow 0} \frac{Y(\bar x+tu)-\bar y}{t}.
$$
\end{proof}

\begin{definition} \text{\cite[Definitions 8.1 and 7.20]{RoWe98}}]
Consider a function $\psi: \mathbb{R}^r \rightarrow \overline{\mathbb{R}}$, a point $\bar z$ with $\psi(\bar z)$ finite, and $w \in  \mathbb{R}^r$.  The subderivative and the superderivative of  $\psi$ at $\bar z$ for  $w$ is defined by
\begin{eqnarray*} {\rm d}\psi(\bar z)(w)&:=& \liminf\limits_{{t\downarrow 0} \atop {w'\to w}} \frac{\psi(\bar{z}+tw')-\psi(\bar{z})}{t},\\
{\rm d}^+\psi (\bar z)(w) &:=& \limsup\limits_{{t\downarrow 0} \atop {w'\to w}} \frac{\psi(\bar z+tw')-\psi(\bar z)}{t},
\end{eqnarray*}
respectively.
When  the limit
$${\rm d}\psi(\bar z)(w)={\rm d}^+\psi (\bar z)(w)=\lim_{{t\downarrow 0} \atop {w'\to w}} \frac{\psi(\bar z+tw')-\psi(\bar z)}{t}$$  exists, we say $\psi$ is semidifferentiable at $\bar z$ for $w$ (or Hadamard differentiable at $\bar z$ in direction $w$). Further if $\psi$ is semidifferentiable at $\bar z$ for every $w$, we say that $\psi$ is semidifferentiable at $\bar z$. It is easy to see that if $\psi$ is semidifferentiable at $\bar z$ for $w$, then
\begin{equation}\label{eqn2.2new}{\rm d}(-\psi)(\bar z)(w)=-{\rm d}\psi(\bar z)(w).\end{equation}
 \end{definition}

According to \cite[Theorem 7.21]{RoWe98}, if the function $\psi$ is semidifferentiable at the point $\bar{z}$, then the subderivative ${\rm d}\psi(\bar{z})(w)$ is finite for every $w \in \mathbb{R}^r$. Moreover, $\psi$ is continuous at $\bar{z}$, and the mapping $w \mapsto {\rm d}\psi(\bar{z})(w)$ is both positively homogeneous and continuous. Furthermore, when $\psi$ is semidifferentiable at $\bar{z}$ for direction $w$, it follows that $\psi$ is directionally differentiable at $\bar{z}$ in $w$, and its subderivative and superderivative coincide with the classical directional derivative of $\psi$ at $\bar{z}$ in the direction $w$. That is,
\begin{equation*}{\rm d}\psi(\bar z)(w)={\rm d}^+\psi (\bar z)(w)=\psi'(\bar z; w):=\lim _{t \downarrow 0} \frac{\psi(\bar z+t w)-\psi(\bar z)}{t}.\label{eqn2.2}\end{equation*}
Furthermore, if $\psi$ is Lipschitz continuous around $\bar z$, then directional differentiability and semidifferentiability of $\psi$ at $\bar{z}$ are equivalent. In addition, when $\psi$ is continuously differentiable at $\bar{z}$, it is necessarily semidifferentiable there, and for any direction $w$, the subderivative satisfies  $ {\rm d}\psi(\bar{z})(w) = \psi'(\bar{z}; w) = \nabla \psi(\bar{z})^{T} w,$ as stated in \cite[Exercise 8.20]{RoWe98}.

\begin{definition}\label{def2.6}  \text{\cite[Defintions 13.3 and 13.6]{RoWe98}}] Let $\psi: \mathbb{R}^r \rightarrow \overline{\mathbb{R}}$, $\psi(\bar z)$ be finite and $\bar v, w\in  \mathbb{R}^r$. The second subderivative  of $\psi$ at $\bar z$ for $\bar v$  and $w$ is
 \begin{eqnarray*}
 {\rm d}^2\psi(\bar z;\bar v)(w):= \liminf\limits_{{t\downarrow 0} \atop {w'\to w}} \frac{\psi(\bar{z}+tw')-\psi(\bar{z})-t\langle \bar{v}, w'\rangle}{\frac{1}{2}t^2}.
\end{eqnarray*}
On the other hand, the second subderivative  of $\psi$ at $\bar z$ for $w$ (without mention of $\bar v$) is defined by
 \begin{eqnarray}\label{secondsub}
 {\rm d}^2\psi(\bar z)(w):= \liminf\limits_{{t\downarrow 0} \atop {w'\to w}} \frac{\psi(\bar{z}+tw')-\psi(\bar{z})-t {\rm d} \psi(\bar z)(w')}{\frac{1}{2}t^2},
\end{eqnarray}
where the sum of $\infty$ and $-\infty$ is interpreted as $\infty$.
The function $\psi$ is twice semidifferentiable at $\bar z$ if it is semidifferentiable at $\bar z$ and  the ``liminf'' in  (\ref{secondsub}) is replaced by the ``lim''
 for any $w \in  \mathbb{R}^r$.
 \end{definition}

If $\psi$ is twice semidifferentiable at $\bar{z}$ in direction $w$, then $\psi$ is also twice directionally differentiable at $\bar{z}$ in the same direction, and its second subderivative coincides with the classical second directional derivative at $\bar{z}$ in direction $w$. That is,
 \begin{equation*}{\rm d}^2\psi(\bar z)(w)=\psi''(\bar z; w):=\lim _{t \downarrow 0} \frac{\psi(\bar z+t w)-\psi(\bar z) -t \psi'( \bar z;w)}{\frac{1}{2} t^2}. \label{eqn2.5new}\end{equation*} 
It follows directly that if $\psi$ is twice semidifferentiable at $\bar{z}$, then
 \begin{equation}\label{twice-epi}
{\rm d}^2 (-\psi)(\bar z)(w)=-{\rm d}^2 \psi(\bar z)(w).
 \end{equation}

According to \cite[Exercise 13.7]{RoWe98}, if $\psi$ is twice semidifferentiable at $\bar{z}$, then the second subderivative ${\rm d}^2 \psi(\bar{z})(w)$ is finite for all $w \in \mathbb{R}^r$. Moreover, when $\psi$ is twice continuously differentiable at $\bar{z}$, it is also twice semidifferentiable there, and for $\bar{v} = \nabla \psi(\bar{z})$, the equality
${\rm d}^2 \psi(\bar{z}; \bar{v})(w) = {\rm d}^2 \psi(\bar{z})(w) = w^{T} \nabla^2 \psi(\bar{z}) w$ holds, as illustrated in \cite[Example 13.8]{RoWe98}.

 By Definition \ref{def2.6}, for a set $S \subseteq \mathbb{R}^r, \bar z\in S,$ $\bar v \in \mathbb{R}^r$, and $w \in T_S(\bar z)$, the second subderivative of the indicator function $\delta_S$ at $\bar z$ for
 $\bar v$ and $w$ is
  \begin{equation*}\label{indicator}
\begin{aligned}
 {\rm d}^2\delta_S(\bar z;\bar v)(w):= \liminf\limits_{{t\downarrow 0} \atop {w'\to w}} \frac{\delta_S(\bar{z}+tw')-t\langle \bar{v}, w'\rangle}{\frac{1}{2}t^2}
= \liminf\limits_{{t\downarrow 0,w'\to w} \atop {\bar{z}+tw' \in S}} \frac{-2\langle \bar{v}, w'\rangle}{t}.
\end{aligned}
 \end{equation*}

The following definition is needed to address nonsmooth functions $\psi$.

\begin{definition}\cite[Definition 2.5]{Ma2023}\label{Defn2.7}
Let $S \subseteq \mathbb{R}^r$, $\psi: \mathbb{R}^r \rightarrow \mathbb{R}$ be semidifferentiable at $\bar z \in S$.
 The second subderivative  of $\delta_{S}$ at $\bar z$ for $ {\rm d}\psi(\bar z)$   and $w$ is
$${\rm d}^2 \delta_S(\bar z; {\rm d}\psi(\bar z))(w):= \liminf\limits_{t \downarrow 0, w' \to w} \frac{\delta_{S}(\bar z+tw')-t{\rm d}\psi(\bar z)(w')}{\frac{1}{2}t^2},$$
where the sum of $\infty$ and $-\infty$ is interpreted as $\infty$.
\end{definition}

Next, we define a second-order constraint qualification that will be needed in the analysis of the second-order sufficient optimality conditions.

Let $\psi: \mathbb{R}^r \rightarrow \overline{\mathbb{R}}$, $\psi(\bar z)$ be finite and $\bar v, w\in  \mathbb{R}^r$. Recall that the function $\psi$ is said to be twice epi-differentiable at $\bar z$ for $\bar v$ if for any $w \in \mathbb{R}^r$ and any sequence $t_k\downarrow 0$ there exists a sequence $w_k\rightarrow w$ such that
\begin{equation*}
	{\rm d}^2\psi(\bar z;\bar v)(w) = \lim_{k\rightarrow \infty}  \frac{\psi(\bar{z}+t_kw_k)-\psi(\bar{z})-t_k\langle \bar{v}, w_k\rangle}{\frac{1}{2}t_k^2}.
\end{equation*}

We introduce the following concept, which is a slightly stronger variant of twice epi-differentiability. Note that when $\psi(x,y)$ does not depend on $x$, this notion of strong twice epi-differentiability reduces to the classical twice epi-differentiability \cite[Defintion13.6]{RoWe98}.

\begin{definition}[strongly twice epi-differentiable]\label{strongepi}  Let $\psi: {R}^{n+m} \rightarrow \bar{{R}}$, $\psi(\bar x,\bar y)$ be finite. We say 
$\psi$ is twice strongly epi-differentiable at $(\bar x,\bar y)$ for   $\xi=(\xi_1,\xi_2)$ if for all $(u,h)$ and  any $t_k\downarrow 0, u_k \rightarrow u$, there  exists {$h_k \rightarrow h$} such that 
 $${\rm d}^2\psi((\bar x,\bar y); (\xi_1,\xi_2))(u,h)=\lim_{k \rightarrow \infty } \frac{\psi(\bar{x}+t_ku_k, \bar y+t_k h_k)-\psi(\bar{x},\bar y)-t_k \langle (\xi_1,\xi_2),(u_k,h_k)\rangle }{\frac{1}{2}t_k^2}. $$
 \end{definition}

\section{Concepts of optimality for the minimax problem}\label{section-points}

In this section, we explore the concept of calm local minimax points and give several equivalent characterizations. These discussions offer deeper insights into the structure and interpretation of this notion. The concept was first introduced in \cite{Ma2023} for the simple minimax problem; here, we generalize it to minimax problems with coupled constraints, as formulated in \eqref{minimax}.

\begin{definition}[calm local minimax point]\label{calmmm}
A point $(\bar{x}, \bar{y}) \in X \times  Y(\bar x)$ is a calm local minimax point of problem \eqref{minimax}, if there exist a $\delta_0>0$ and a radius function $\tau: \mathbb{R}_{+} \rightarrow \mathbb{R}_{+}$ satisfying $\tau(\delta) \rightarrow 0$ as $\delta \downarrow 0$ and $\tau$ is calm at $0$, i.e., $\tau(\delta) \leq \kappa \delta$ for some $\kappa>0$, such that for any $\delta \in\left(0, \delta_0\right]$ and any $x\in X\cap \mathbb{B}_\delta(\bar x), y\in Y(\bar x)\cap \mathbb{B}_\delta(\bar y)$, we have
\begin{equation*}\label{localmm1}
f(\bar{x}, y) \leq f(\bar{x}, \bar{y}) \leq \max_{y' \in  Y(x)\cap\mathbb{B}_{\tau( \delta)}(\bar y)} f (x, y' ).
\end{equation*}
\end{definition}

We will need following definitions for the subsequent discussions.

\begin{definition}[inner calmness \text{\cite[Definition 2.2]{BGO19}}]\label{innercalm}
	Consider  a set-valued map $\Gamma: \mathbb{R}^n \rightrightarrows \mathbb{R}^m$.
	Given $\bar x\in X$ and $\bar y \in  \Gamma(\bar x)$, we say that the set-valued map $\Gamma$ is inner calm at $(\bar{x}, \bar{y})$ w.r.t. $X$ if there exist $\kappa >0$ and  $\delta_0>0$  such that
	$$\bar y \in \Gamma(x)+\kappa \|x-\bar x\| \mathbb{B} \quad \forall x\in \mathbb{B}_{\delta_0}(\bar x) \cap X,$$
	or equivalently (\cite[Lemma 2.2]{BGO19} or \cite[Definition 2.2]{Benko-2021}), if there exists $\kappa >0$ such that for any $x_k \rightarrow \bar x$ with $x_k \in X$ there exists a sequence $y_k$ satisfying $y_k \in \Gamma (x_k )$ and for sufficiently large $k$,
	$\|y_k-\bar y\| \leq \kappa \|x_k-\bar x\|.$
\end{definition}

{
\begin{definition}[inner semicontinuity \text{\cite[Definition 1.63]{M06}}]
Consider  a set-valued map $\Gamma: \mathbb{R}^n \rightrightarrows \mathbb{R}^m$. Given $\bar x\in X$ and $\bar y \in  \Gamma(\bar x)$, we say that the set-valued map $\Gamma$ is inner semicontinuous at $(\bar{x}, \bar{y})$ if for any $x_k \rightarrow \bar x$, there exists a sequence $y_k \in \Gamma\left(x_k\right)$ converging to $\bar y$.
\end{definition}
}

Throughout the paper, we assume that $Y(x)\cap\mathbb{B}_{\tau( \delta)}(\bar y) \not= \emptyset$ for some $\delta_0 >0$ and any $\delta \in\left(0, \delta_0\right], x \in X\cap \mathbb{B}_{\delta}(\bar x)$.  A sufficient condition for the above assumption is the inner calmness of the set-valued mapping $Y$ at $(\bar x,\bar y)$, i.e., for any $x_k \to \bar x$ with $x_k \in X$, there exists $\kappa >0$ and $y_k \in Y(x_k)$ such that $\|y_k -\bar y\| \leq \kappa \|x_k -\bar x\|$. By \cite[Theorem 5.2]{Bednarczuk2020}, when $Y(x):=\{y|g(x,y) \leq 0\}$ (where $g$ is locally Lipschitz continuous near $(\bar x, \bar y)$) is inner semicontinuous at $(\bar x,\bar y)$ and the RCRCQ holds at $(\bar x,\bar y)$, $Y$ is inner calm at $(\bar x,\bar y)$. If there exists a sequence $y_k \to \bar y$ such that $g(\bar x,y_k) <0$, then $Y$ is inner semicontinuous at $(\bar x,\bar y)$ \cite[Lemma 5.2]{Still2018}.

The subsequent equivalent definitions of the calm local minimax point will play a crucial role in establishing the relevant optimality conditions. The proof follows from \cite[Proposition 3.1, Lemma 3.1, Proposition 3.2]
{Ma2023}.

\begin{proposition}[equivalent definitions for the calm local minimax point]
\label{prop3.1}
Given $(\bar x,\bar y) \in X \times Y(\bar x)$. Then, the following concepts are equivalent.
\begin{itemize}
\item[(i)] $(\bar x,\bar y)$ is a calm local minimax point to problem \eqref{minimax}.
\item[(ii)] There exist constants $\delta_0>0$ and $\kappa>0$, such that for any $\delta\in(0, \delta_0]$ and any $x\in X\cap \mathbb{B}_{\delta}(\bar{x})$, $y\in Y(\bar x)\cap \mathbb{B}_{\delta}(\bar{y})$, we have
			\begin{equation}\label{equiv1}
				f(\bar{x}, y) \leq f(\bar{x},\bar{y})
				\leq V_{\kappa \delta}(x),
			\end{equation}
		where 
		\begin{equation}\label{localv}	V_{\epsilon_0}(x):=\max_{y' \in  Y(x)\cap\mathbb{B}_{\epsilon_0}(\bar y)} f (x, y' )\end{equation} for  $\epsilon_0 >0$ is the localized value function. 
\item[(iii)] There exist a constant $\delta_0>0$ and a radius function $\tau: \mathbb{R}_{+} \rightarrow \mathbb{R}_{+}$ satisfying $\tau(\delta) \rightarrow 0$ as $\delta \downarrow 0$ and $\tau$ is calm at $0$, such that for any $x\in X\cap \mathbb{B}_{\delta_0}(\bar x), y\in Y(\bar x)\cap \mathbb{B}_{\delta_0}(\bar y)$, we have
\begin{equation*}\label{equiv11}
f(\bar{x}, y) \leq f(\bar{x}, \bar{y}) \leq V_{\tau}(x),
\end{equation*}
where
\begin{equation}\label{localizedV}
V_{\tau}(x):=\max_{y' \in Y(x) \cap \mathbb{B}_{\tau(\|x-\bar x\|)}(\bar y)} f(x,y').
\end{equation}

\item[(iv)] $(\bar x,\bar y)$ is a local minimax point and the optimal solution mapping
\begin{equation}\label{localizedS}
S_{\tau}(x):=\argmax_{y' \in Y(x) \cap \mathbb{B}_{\tau(\|x-\bar x\|)}(\bar y)} f(x,y').
\end{equation}
is inner calm at $(\bar{x}, \bar{y})$.
\end{itemize}
\end{proposition}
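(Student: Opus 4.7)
The plan is to establish the chain of implications (i)$\Rightarrow$(ii)$\Rightarrow$(iii)$\Rightarrow$(iv)$\Rightarrow$(i), using only the localized value function $V_\epsilon$, the radius function $\tau$ and the defining inner-calmness estimate. The whole argument proceeds by comparing balls of the form $\mathbb{B}_{\tau(\delta)}(\bar y)$ with $\mathbb{B}_{\kappa\delta}(\bar y)$ and tracking how the maximum of $f(x,\cdot)$ changes as the admissible ball is enlarged or replaced.

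First, for (i)$\Rightarrow$(ii), note that if $\tau$ is calm at $0$ with $\tau(\delta)\le \kappa\delta$, then $\mathbb{B}_{\tau(\delta)}(\bar y)\subseteq \mathbb{B}_{\kappa\delta}(\bar y)$ gives
\[
\max_{y'\in Y(x)\cap \mathbb{B}_{\tau(\delta)}(\bar y)} f(x,y') \;\le\; \max_{y'\in Y(x)\cap \mathbb{B}_{\kappa\delta}(\bar y)} f(x,y')\;=\;V_{\kappa\delta}(x),
\]
so the minimax inequality in (i) transfers to \eqref{equiv1}. For (ii)$\Rightarrow$(iii), I would simply set $\tau(\delta):=\kappa\delta$ (which is trivially calm and vanishing) and observe that for $x\in X\cap\mathbb{B}_{\delta_0}(\bar x)$ one may apply (ii) with the specific choice $\delta=\|x-\bar x\|$: this yields $f(\bar x,\bar y)\le V_{\kappa\|x-\bar x\|}(x)=V_\tau(x)$, which is exactly \eqref{localizedV}. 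The ``$f(\bar x,y)\le f(\bar x,\bar y)$'' part is common to all four statements and carries over immediately.

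For (iii)$\Rightarrow$(iv), first observe that (iii) obviously implies that $(\bar x,\bar y)$ is a local minimax point (the radius function $\tau$ here is even calm, hence in particular a valid radius function in the sense of Dai--Zhang). To obtain inner calmness of $S_\tau$ at $(\bar x,\bar y)$, pick any sequence $x_k\to\bar x$ with $x_k\in X$, and for each $k$ choose $y_k\in S_\tau(x_k)$, which is nonempty under the standing assumption $Y(x)\cap\mathbb{B}_{\tau(\|x-\bar x\|)}(\bar y)\ne\emptyset$ together with attainment of the inner maximum. By construction $y_k\in\mathbb{B}_{\tau(\|x_k-\bar x\|)}(\bar y)$, and the calmness of $\tau$ with constant $\kappa$ yields $\|y_k-\bar y\|\le\kappa\|x_k-\bar x\|$, which is precisely the sequential form of inner calmness from Definition~\ref{innercalm}.

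Finally, for (iv)$\Rightarrow$(i) (which then closes the loop), let $\tau$ be the radius function from the local minimax property of $(\bar x,\bar y)$ and let $\kappa'>0$ be an inner-calmness constant of $S_\tau$. Define the new calm radius function $\tilde\tau(\delta):=\kappa'\delta$. For each $x\in X\cap\mathbb{B}_{\delta_0}(\bar x)$, inner calmness provides some $y_x\in S_\tau(x)$ with $\|y_x-\bar y\|\le \kappa'\|x-\bar x\|\le \kappa'\delta_0$, so $y_x\in Y(x)\cap\mathbb{B}_{\tilde\tau(\|x-\bar x\|)}(\bar y)$ and $f(x,y_x)=V_\tau(x)\ge f(\bar x,\bar y)$ by the local minimax property; hence $V_{\tilde\tau(\|x-\bar x\|)}(x)\ge f(\bar x,\bar y)$, which (setting $\delta=\|x-\bar x\|$) recovers Definition~\ref{calmmm} with radius function $\tilde\tau$. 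I expect the main obstacle to be in the last implication: one must ensure that the inner-calm selection $y_x$ not only lies in the original ball $\mathbb{B}_{\tau(\|x-\bar x\|)}(\bar y)$ (needed so that $f(x,y_x)=V_\tau(x)$) but simultaneously lies in the smaller calm ball $\mathbb{B}_{\tilde\tau(\|x-\bar x\|)}(\bar y)$, which is exactly what the inner-calmness estimate $\|y_x-\bar y\|\le\kappa'\|x-\bar x\|$ guarantees. Once this observation is in place the four characterizations are seen to be fully interchangeable, and the proof can be concluded by citing \cite[Proposition~3.1, Lemma~3.1, Proposition~3.2]{Ma2023} for the technical lemmas on attainment and measurability that underlie the passage between $V_\epsilon$ and $V_\tau$.
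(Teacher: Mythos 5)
Your proof is correct and follows essentially the route the paper intends (the paper itself only cites \cite[Proposition 3.1, Lemma 3.1, Proposition 3.2]{Ma2023}, whose argument is exactly this cyclic chain of implications built on comparing the balls $\mathbb{B}_{\tau(\delta)}(\bar y)$ and $\mathbb{B}_{\kappa\delta}(\bar y)$ and on the attainment/nonemptiness of the localized argmax). The only cosmetic gaps are the trivial case $x=\bar x$ in (ii)$\Rightarrow$(iii), where $V_\tau(\bar x)=f(\bar x,\bar y)$ directly, and the routine shrinking of $\delta_0$ in (iv)$\Rightarrow$(i) so that the local minimax estimate and the inner-calmness estimate hold simultaneously.
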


Clearly, $V_{\tau}(\bar{x}) = f(\bar{x}, \bar{y})$ and $S_{\tau}(\bar{x}) = \{\bar{y}\}$. Based on Proposition \ref{prop3.1}, it follows that by imposing $\tau(\delta) \to 0$ as $\delta \downarrow 0$ in Definition \ref{calmmm}, the concept of calm local minimax points corresponds to considering the value function $V_{\tau}(x)$ with a radius function of the form $\tau(\delta) \equiv \kappa \delta$ for all $\delta > 0$. Specifically, $\left(\bar x, \bar y\right)$ is a calm local minimax point if and only if $\bar y$ is a local maximum point of $f\left(\bar x , \cdot\right)$  on $ Y(\bar x)$, and $\bar x$ is a local minimum point of $V_{\tau}(\cdot)$ on $X$ with the radius function being $\tau(\delta) \equiv \kappa\delta$ for any $\delta>0$. 

In the rest of this section, we show that existing discussions on optimality conditions for local minimax points in { \cite{DaiZh-2020}} are indeed discussing the calm local minimax points.

\begin{definition}[Jacobian uniqueness conditions \text{\label{jacobian} \cite[Definition 2.1]{DaiZh-2020}}]
Let $Y(x):=\{y \in \mathbb{R}^m|\varphi_i(x,y)=0, i=1,\dots, q_1; \varphi_i(x,y)=0,i=1,\dots, q_2\}$, $(\bar x,\bar y) \in X \times Y(\bar x)$, $f, \varphi$ be twice continuously differentiable around $(\bar x,\bar y)$.
 We say that Jacobian uniqueness conditions (without mentioning the multipliers)  of the maximization problem  $\max_{y\in Y(\bar x)} f(\bar x,y)$ are satisfied at 
 $\bar y$ 
 if  LICQ holds at $\bar y$, the Karush-Kuhn-Tucker condition holds at $\bar y$, the strict complementarity condition holds at $\bar y$, and the second-order sufficient optimality condition holds.
\end{definition}
By \cite[Lemma 2.1]{DaiZh-2020}, under the Jacobian uniqueness conditions, the solution mapping $S_{\epsilon_0}(x)$ is a single-valued map around the point $\bar x$ and  is twice continuously differentiable at $\bar x$. In fact not only the solution mapping but also the multiplier mapping a single-valued map around the point $\bar x$ and  is twice continuously differentiable. Hence  the Jacobian uniqueness condition is a very strong assumption. Under the Jacobian uniqueness condition, the concepts local minimax point and the calm local minimax point coincide.
\begin{proposition}\label{prop4.1}
Let $Y(x):=\{y \in \mathbb{R}^m|g(x,y)\leq 0, \varphi(x,y)=0\}$, $(\bar x,\bar y) \in X \times Y(\bar x)$, $f,g,\varphi$ be twice continuously differentiable around $(\bar x,\bar y)$, and $\left(\bar \beta_1, \bar \beta_2 \right) \in \mathbb{R}^{q_1} \times \mathbb{R}^{q_2}$. Suppose that the Jacobian uniqueness conditions hold at $(\bar x,\bar y,\bar \beta_1,\bar \beta_2)$. Then, the following concepts are equivalent.
\begin{itemize}
\item[(i)] $(\bar x,\bar y)$ is a calm local minimax point to problem \eqref{minimax}.


\item[(ii)] There exist $\delta_0>0, \epsilon_0>0$ such that for any $\delta\in(0, \delta_0]$ and any $x\in X\cap \mathbb{B}_{\delta}(\bar{x})$, $y\in Y(\bar x)\cap \mathbb{B}_{\delta}(\bar{y})$, we have
\begin{equation} \label{ii} f(\bar{x}, y) \leq f(\bar{x},\bar{y})
				\leq V_{\epsilon_0}(x)=\max_{y' \in  Y(x)\cap\mathbb{B}_{\epsilon_0}(\bar y)} f (x, y' ).
				\end{equation}

and  there exists a single-valued map $y(x)$ which is twice continuously differentiable at $\bar x$ such that $S_{\epsilon_0}(x):=\argmax_{y' \in  Y(x)\cap\mathbb{B}_{\epsilon_0}(\bar y)} f (x, y' )=\{y(x)\}$ for $x$ sufficiently close to  $\bar x$.

\end{itemize}
\end{proposition}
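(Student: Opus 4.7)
The plan is to reduce the equivalence to the interplay between the shrinking radius $\kappa\delta$ appearing in the calm local minimax definition and the fixed radius $\epsilon_0$ appearing in \eqref{ii}, exploiting \cite[Lemma 2.1]{DaiZh-2020}. That lemma, invoked in the excerpt immediately preceding the proposition, produces some $\epsilon_0>0$ together with a twice continuously differentiable single-valued selection $y(\cdot)$ satisfying $y(\bar x)=\bar y$ and $S_{\epsilon_0}(x)=\{y(x)\}$ on a neighborhood of $\bar x$; in particular $y$ is Lipschitz there with some modulus $L>0$. The second assertion of (ii) (single-valuedness and twice differentiability of $S_{\epsilon_0}$) is therefore automatic, so the proof reduces to showing that (i) is equivalent to the inequality chain in \eqref{ii} for a suitable pair $\delta_0,\epsilon_0$.

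For the direction (i)$\Rightarrow$(ii), I plan to invoke Proposition~\ref{prop3.1}(ii) to obtain constants $\delta_0,\kappa>0$ such that $f(\bar x,y)\leq f(\bar x,\bar y)\leq V_{\kappa\delta}(x)$ on the relevant balls. After shrinking $\delta_0$ so that $\kappa\delta_0\leq \epsilon_0$, the inclusion $Y(x)\cap\mathbb{B}_{\kappa\delta}(\bar y)\subseteq Y(x)\cap\mathbb{B}_{\epsilon_0}(\bar y)$ enlarges the feasible set of the inner maximization, giving $V_{\kappa\delta}(x)\leq V_{\epsilon_0}(x)$ and hence the desired chain in \eqref{ii}.

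For the converse direction (ii)$\Rightarrow$(i), I would set $\kappa:=L$ and shrink $\delta_0$ so that $\kappa\delta_0\leq \epsilon_0$. Then for any $x$ with $\|x-\bar x\|\leq \delta\leq \delta_0$ the Lipschitz bound $\|y(x)-\bar y\|\leq L\|x-\bar x\|\leq \kappa\delta$ places the unique maximizer $y(x)\in S_{\epsilon_0}(x)$ inside the smaller ball $\mathbb{B}_{\kappa\delta}(\bar y)$, so that
\[
V_{\epsilon_0}(x)=f(x,y(x))\leq V_{\kappa\delta}(x)\leq V_{\epsilon_0}(x),
\]
which forces $V_{\kappa\delta}(x)=V_{\epsilon_0}(x)$. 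Combined with \eqref{ii}, this yields $f(\bar x,y)\leq f(\bar x,\bar y)\leq V_{\kappa\delta}(x)$, and calmness of the local minimax point then follows from Proposition~\ref{prop3.1}(ii).

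The step I expect to be the main obstacle is justifying the identity $V_{\epsilon_0}(x)=V_{\kappa\delta}(x)$ on a neighborhood of $\bar x$, since without this coincidence one can only infer $V_{\kappa\delta}\leq V_{\epsilon_0}$ and the (ii)$\Rightarrow$(i) direction would break down. This is precisely where the Jacobian uniqueness hypothesis enters, via the Lipschitz continuity of the single-valued localized solution mapping $y(\cdot)$ supplied by \cite[Lemma 2.1]{DaiZh-2020}; everything else amounts to a routine comparison of feasible sets and an application of Proposition~\ref{prop3.1}.
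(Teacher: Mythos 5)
Your proposal is correct and follows essentially the same route as the paper: both directions hinge on \cite[Lemma 2.1]{DaiZh-2020} supplying the twice continuously differentiable (hence locally Lipschitz) selection $y(\cdot)$, and the key step in (ii)$\Rightarrow$(i) — showing the maximizer over $Y(x)\cap\mathbb{B}_{\epsilon_0}(\bar y)$ already lies in $\mathbb{B}_{\kappa\|x-\bar x\|}(\bar y)$ so that $V_{\epsilon_0}(x)=V_{\kappa\delta}(x)$ — is exactly the equality the paper exploits before applying Proposition~\ref{prop3.1}(ii). No substantive differences.
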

\begin{proof}

(i) $\Rightarrow$ (ii): \eqref{equiv1}  implies \eqref{ii} by fixing some $\delta>0$. By \cite[Lemma 2.1]{DaiZh-2020}, under the Jacobian uniqueness conditions, the solution mapping $S_{\epsilon_0}(x)$ has a single-valued twice continuously differentiable localization at $\bar x$. 

(ii) $\Rightarrow$ (i): Since under the Jacobian uniqueness condition, $S_{\epsilon_0}(x)$ has a single-valued twice continuously differentiable localization at $\bar x$,  $S_{\epsilon_0}(x)=\{y(x)\}$ for all $x$ sufficiently close to $\bar x$ and $y(x)$ is twice continuously differentiable at $\bar y$, there exist $\kappa >0$ and $\delta_1>0$ such that $\|y(x)-\bar y\|\leq \kappa \|x-\bar x\|$ for $x \in \mathbb{B}_{\delta_1}(\bar x)$. Let $\delta_0: =\min \{\delta_0,\delta_1,\epsilon_0/\kappa\}$. Then for any $\delta \in\left(0, \delta_0\right]$ and any $x\in X\cap \mathbb{B}_\delta(\bar x), y\in Y(\bar x)\cap \mathbb{B}_\delta(\bar y)$, we have
\begin{eqnarray*}
f(\bar{x}, y) & \leq  &  f(\bar{x}, \bar{y}) 
 \leq  \max_{y' \in  Y(x)\cap\mathbb{B}_{\epsilon_0}(\bar y)} f (x, y' ) = \max_{y' \in  Y(x)\cap\mathbb{B}_{\epsilon_0}(\bar y)\cap\mathbb{B}_{\kappa\|x-\bar x\|}(\bar y)} f (x, y' ) \\
 & \leq &\max_{y' \in  Y(x)\cap\mathbb{B}_{\kappa\delta}(\bar y)} f (x, y' ).
\end{eqnarray*}

By Proposition \ref{prop3.1}(ii), $(\bar x,\bar y)$ is a calm local minimax point.
 
\end{proof}

Note that the optimality conditions for local minimax points in \cite{DaiZh-2020} is  derived under the Jacobian uniqueness assumption and  the proof in \cite{DaiZh-2020} relies on the equivalent characterization given in Proposition~\ref{prop4.1}(ii).

\section{Optimality conditions for the minimax problem}\label{nonsmooth-section}

We begin with a first-order sensitivity analysis of the value function \( V_{\tau} \) defined in \eqref{localizedV}.

\begin{lemma}[first-order directional differentiability of the value function]\label{prop4.1}
Given $(\bar x,\bar y)\in \operatorname{gph}Y$. Suppose that $f$ is semidifferentiable at $(\bar x,\bar y)$, the set-valued mapping $Y$ is semidifferentiable at $\bar x$ for $\bar y$ and calm around $(\bar x,\bar y)$. Then, there exists $\kappa>0$ such that the value function $V_{\tau}$ defined in \eqref{localizedV} (with $\tau(\delta):=\kappa \delta$)
 is semidifferentiable (and thus directionally differentiable) at $\bar x$ and for any $u \in \mathbb{R}^n$,
$$V_{\tau}'(\bar x;u) = \max\limits_{h \in D Y(\bar x,\bar y)(u)} {\rm d} f(\bar x,\bar y)(u,h). $$
\end{lemma}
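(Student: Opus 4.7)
The plan is to prove semidifferentiability by establishing matching lower and upper bounds on the subderivative ${\rm d}V_\tau(\bar x)(u)$ and the superderivative ${\rm d}^+V_\tau(\bar x)(u)$. Note first that $V_\tau(\bar x)=f(\bar x,\bar y)$ (since $\tau(0)=0$ forces the max over the singleton $\{\bar y\}$), so I must analyze the quotient $\bigl(V_\tau(\bar x+tu')-f(\bar x,\bar y)\bigr)/t$ as $t\downarrow 0$ and $u'\to u$. I will take $\kappa$ to be the calmness modulus of $Y$ provided by Proposition 2.1(iii) (or any constant at least as large), so that the calm selection automatically lands inside $\mathbb{B}_{\tau(\|x-\bar x\|)}(\bar y)$.

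The lower bound step: fix $u\in\mathbb{R}^n$ and any $h\in DY(\bar x,\bar y)(u)$. For arbitrary $t_k\downarrow 0$ and $u_k\to u$, Proposition 2.1 (applied using semidifferentiability of $Y$ plus calmness) supplies $y_k\in Y(\bar x+t_k u_k)$ with $(y_k-\bar y)/t_k\to h$ and $\|y_k-\bar y\|\le \kappa\|t_k u_k\|=\tau(\|t_k u_k\|)$ for large $k$. Hence $y_k$ is feasible in the definition of $V_\tau(\bar x+t_k u_k)$, and semidifferentiability of $f$ at $(\bar x,\bar y)$ along $(u,h)$ gives
\[
\liminf_{k\to\infty}\frac{V_\tau(\bar x+t_k u_k)-f(\bar x,\bar y)}{t_k}\;\ge\;\lim_{k\to\infty}\frac{f(\bar x+t_ku_k,y_k)-f(\bar x,\bar y)}{t_k}={\rm d}f(\bar x,\bar y)(u,h).
\]
Taking the supremum over $h\in DY(\bar x,\bar y)(u)$ yields ${\rm d}V_\tau(\bar x)(u)\ge\sup_h{\rm d}f(\bar x,\bar y)(u,h)$.

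The upper bound step: for arbitrary $t_k\downarrow 0$ and $u_k\to u$, pick a maximizer $y_k$ in the definition of $V_\tau(\bar x+t_k u_k)$ (existence follows from continuity of $f$ at $(\bar x,\bar y)$, which is implied by semidifferentiability, together with compactness of $Y(\bar x+t_ku_k)\cap\mathbb{B}_{\tau(\|t_ku_k\|)}(\bar y)$). The feasibility constraint gives $\|(y_k-\bar y)/t_k\|\le \kappa\|u_k\|$, so the sequence $h_k:=(y_k-\bar y)/t_k$ is bounded and admits a convergent subsequence $h_k\to h$. By the definition of the graphical derivative (see \eqref{Graphical}), $h\in DY(\bar x,\bar y)(u)$, and semidifferentiability of $f$ gives
\[
\limsup_{k\to\infty}\frac{V_\tau(\bar x+t_k u_k)-f(\bar x,\bar y)}{t_k}=\lim_{k\to\infty}\frac{f(\bar x+t_ku_k,\bar y+t_kh_k)-f(\bar x,\bar y)}{t_k}={\rm d}f(\bar x,\bar y)(u,h),
\]
which is bounded above by $\sup_{h'\in DY(\bar x,\bar y)(u)}{\rm d}f(\bar x,\bar y)(u,h')$. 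Thus ${\rm d}^+V_\tau(\bar x)(u)\le\sup_{h}{\rm d}f(\bar x,\bar y)(u,h)$. Finally, since $DY(\bar x,\bar y)(u)$ is closed (as a section of a tangent cone) and bounded by $\kappa\|u\|$ under calmness, and $h\mapsto{\rm d}f(\bar x,\bar y)(u,h)$ is continuous by semidifferentiability of $f$, the supremum is attained, so the two bounds coincide and equal the claimed maximum; this gives semidifferentiability with the stated formula.

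The main technical obstacle is the careful calibration of the radius function: the constant $\kappa$ defining $\tau$ must simultaneously play two roles—as the calmness modulus of $Y$ (so that the recovery sequence $y_k$ in the lower bound lies inside $\mathbb{B}_{\tau(\|x_k-\bar x\|)}(\bar y)$) and as the a priori bound on $(y_k-\bar y)/t_k$ for the upper-bound maximizing sequence (needed to extract a convergent subsequence and identify its limit in $DY(\bar x,\bar y)(u)$). The flexibility in the statement (``there exists $\kappa>0$'') is precisely what allows this calibration; subtle points, such as handling the perturbation of $u$ to $u'$ in the semidifferentiability limit rather than a fixed direction $u$, are absorbed by using Proposition 2.1(iii) in the lower bound and by the full $\limsup$ formulation in the upper bound.
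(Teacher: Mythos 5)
Your proposal is correct and follows essentially the same route as the paper: the lower bound via the recovery sequences of Proposition 2.1(iii), the upper bound via maximizing sequences whose difference quotients are bounded by the calmness radius and hence subconverge to some $h\in DY(\bar x,\bar y)(u)$, and the identification of the two bounds. The only cosmetic difference is that the paper obtains attainment of the supremum by noting that the superderivative is realized as ${\rm d}f(\bar x,\bar y)(u,h)$ for a specific $h$, whereas you invoke closedness/boundedness of $DY(\bar x,\bar y)(u)$ and continuity of ${\rm d}f(\bar x,\bar y)(u,\cdot)$ — both are valid.
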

\begin{proof}
Let $u \in \mathbb{R}^n$ and $\tau(\delta):=\kappa \delta$ where $\kappa$ is given by Proposition \ref{prop2.1} (iii). There exist $t_k \downarrow 0, u_k \to u$ such that 
$${\rm d} V_{\tau} (\bar x)(u) = \lim\limits_{k \to \infty} \frac{V_{\tau}( \bar x+t_ku_k) - V_{\tau}(\bar x)}{t_k}.$$

Since $Y$ is semidifferentiable at $\bar x$ for $\bar y$ and calm around $(\bar x,\bar y)$, by Proposition \ref{prop2.1}, for any $h \in D Y(\bar x,\bar y)(u)$, there exists $h_k \to h$ such that $\bar y+t_kh_k \in Y(\bar x+t_ku_k)$ and $\|h_k\| \leq \kappa \|u_k\|$.

Then,
\begin{equation*}
\begin{aligned}
{\rm d} V_{\tau}(\bar x)(u)  & = \lim\limits_{k \to \infty} \frac{V_{\tau}\left(\bar{x}+t_ku_k\right)-V_{\tau}(\bar{x})}{t_k} \\
&\geq \liminf\limits_{k \to \infty} \frac{f\left(\bar{x}+t_k u_k, \bar{y}+t_k h_k\right)-f(\bar{x}, \bar{y})}{t_k} \\
&=  {\rm d} f(\bar x,\bar y)(u,h).
\end{aligned}
\end{equation*}
Thus, \begin{equation}\label{star}{\rm d} V_{\tau}(\bar x)(u)  \geq \sup_{h \in D Y(\bar x,\bar y)(u)} {\rm d} f(\bar x,\bar y)(u,h).\end{equation}

On the other hand, consider the sequence $t_k \downarrow 0, u_k \to u$ such that
$${\rm d}^+ V_{\tau} (\bar x)(u) = \lim\limits_{k \to \infty} \frac{V_{\tau}( \bar x+t_ku_k) - V_{\tau}(\bar x)}{t_k}.$$ 
Denote $x_k:= \bar x+t_ku_k$. Since $\tau(\delta)=\kappa \delta$, there exists a sequence
$$y_k \in S_{\tau}(x_k) = \argmax _{y \in Y(x_k)\cap \mathbb{B}_{\tau(\|x_k-\bar x\|)}(\bar{y})
	} f (x_k, y),$$
such that $y_k \in Y(x_k), \|y_k-\bar y\| \leq \kappa \|x_k-\bar x\|$ for sufficiently large $k$. Thus, by passing to a subsequence if necessary (without relabeling), there exists $h\in \mathbb{R}^m$ such that $h_k:=(y_k-\bar y)/t_k \to h$. By the definition of the graphical derivative, $h \in D Y(\bar x,\bar y)(u)$.
Hence we have
\begin{equation}\label{star2}
\begin{aligned}
{\rm d}^+ V_{\tau} (\bar x)(u)& =  \lim\limits_{k \to \infty} \frac{V_{\tau} (x_k)-V(\bar{x})}{t_k} \\
&= \lim\limits_{k \to \infty} \frac{f (x_k, y_k )-f(\bar{x}, \bar{y})}{t_k} \\
& = {\rm d} f(\bar x,\bar y)(u,h).   \qquad \mbox{ (by semidifferentiability of $f$)}  
\end{aligned}
\end{equation}

Thus, \begin{equation*}{\rm d}^+ V_{\tau}(\bar x)(u)  \leq \sup_{h \in D Y(\bar x,\bar y)(u)} {\rm d} f(\bar x,\bar y)(u,h).\end{equation*}

Then, combining with \eqref{star}, $V_{\tau}$ is directionally differentiable at $\bar x$ and 
\begin{equation*} V_{\tau}'(\bar x;u)  = \sup_{h \in D Y(\bar x,\bar y)(u)} {\rm d} f(\bar x,\bar y)(u,h) = \max_{h \in D Y(\bar x,\bar y)(u)} {\rm d} f(\bar x,\bar y)(u,h),\end{equation*}
where the second equality holds since \eqref{star2} holds for some $h \in D Y(\bar x,\bar y)(u)$.
\end{proof}

Now, we can give first-order optimality conditions as follows.  Even in the simple case where $Y(x)$ is independent of $x$, our results improved the corresponding results    (\cite[Theorem 4.1]{Ma2023}) in that  the subderivatives of $f$ does not need to satisfy the separation properties.

\begin{theorem}[first-order optimality conditions for the  couple-constrained minimax problem]\label{1st-non1}
Let $(\bar x, \bar y) \in X \times  Y(\bar x)$.
\begin{itemize}
\item[(a)] Suppose that either $Y(\bar x)$ is the whole space, or $f(\bar x,\cdot)$ is semidifferentiable at $\bar y$.  Suppose further that $f(x,\cdot)$ is continuous for any $x \in X$ near $\bar x$.
If $(\bar x, \bar y)$ is a calm local minimax point to the minimax problem \eqref{minimax}, then
\begin{eqnarray}\label{x-non1}
&& \sup_{h \in D Y(\bar x,\bar y)(u)} {\rm d}^+f(\bar{x}, \bar{y})(u,h) \geq 0 \quad \forall\, u \in T_X(\bar{x}),
\end{eqnarray}
\begin{equation}\label{y-non1}
{\rm d}_y^+ f(\bar{x}, \bar{y})(h) \leq 0\quad \forall\, h \in T_{Y(\bar x)}(\bar{y}).
\end{equation}
\item[(b)] Suppose that $f$ is semidifferentiable at $(\bar x,\bar y)$, the set-valued mapping $Y$ is semidifferentiable at $\bar x$ for $\bar y$ and calm around $(\bar x,\bar y)$, and
   \begin{align*}
  \sup_{h \in D Y(\bar x,\bar y)(u)} {\rm d}f(\bar{x}, \bar{y})(u,h) > 0 \quad &\forall\, u \in T_X(\bar{x}) \setminus \{0\}, \label{} \\
  {\rm d}_y f(\bar x,\bar y)(h)<0 \quad  &\forall\, h \in T_{ Y(\bar x)}(\bar y) \setminus \{0\}. \label{}
   \end{align*}
Then $(\bar x, \bar y)$ is a calm local minimax point to problem \eqref{minimax}.
\end{itemize}
\end{theorem}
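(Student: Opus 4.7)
My plan is to reduce both directions to the equivalent characterization of calm local minimax points from Proposition \ref{prop3.1}, combined with the first-order sensitivity result for the localized value function established in Lemma \ref{prop4.1}.

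For the necessary direction (a), Proposition \ref{prop3.1}(ii) provides constants $\kappa>0$ and $\delta_0>0$ with $f(\bar x,y)\leq f(\bar x,\bar y)\leq V_{\kappa\delta}(x)$ on the appropriate neighborhood. The inequality $f(\bar x,y)\leq f(\bar x,\bar y)$ says that $\bar y$ is a local maximizer of $f(\bar x,\cdot)$ on $Y(\bar x)$. To upgrade this to \eqref{y-non1}, I take $h\in T_{Y(\bar x)}(\bar y)$ together with a realizing sequence $t_k\downarrow 0,\ h_k\to h$ satisfying $\bar y+t_kh_k\in Y(\bar x)$. In the case $Y(\bar x)=\mathbb{R}^m$, every sequence $t_k\downarrow 0,\ h_k\to h$ eventually yields $\bar y+t_kh_k$ near $\bar y$, so the difference quotient is nonpositive along every sequence and the superderivative is $\leq 0$. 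In the semidifferentiable case, the superderivative coincides with the limit along any realizing sequence, again giving $\leq 0$. For \eqref{x-non1}, I fix $u\in T_X(\bar x)$ and a realizing sequence $x_k=\bar x+t_ku_k\in X$. Since $f(x_k,\cdot)$ is continuous and $Y(x_k)\cap\mathbb{B}_{\kappa\|x_k-\bar x\|}(\bar y)$ is compact, the localized maximum defining $V_{\kappa\|x_k-\bar x\|}(x_k)$ is attained at some $y_k$ with $\|y_k-\bar y\|\leq \kappa\|x_k-\bar x\|$. Setting $h_k:=(y_k-\bar y)/t_k$, the bound $\|h_k\|\leq \kappa\|u_k\|$ lets me extract a subsequential limit $h$, and the definition of the graphical derivative places $h\in DY(\bar x,\bar y)(u)$. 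The key inequality $V_{\kappa\|x_k-\bar x\|}(x_k)\geq f(\bar x,\bar y)$ rewrites as $[f(\bar x+t_ku_k,\bar y+t_kh_k)-f(\bar x,\bar y)]/t_k\geq 0$, and passing to the limsup yields ${\rm d}^+f(\bar x,\bar y)(u,h)\geq 0$, which gives \eqref{x-non1}.

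For the sufficient direction (b), the stronger hypotheses permit invoking Lemma \ref{prop4.1} with $\tau(\delta):=\kappa\delta$: $V_\tau$ is semidifferentiable at $\bar x$ with $V_\tau'(\bar x;u)=\max_{h\in DY(\bar x,\bar y)(u)}{\rm d}f(\bar x,\bar y)(u,h)$. The strict positivity assumption therefore forces $V_\tau'(\bar x;u)>0$ on $T_X(\bar x)\setminus\{0\}$. A standard contradiction by rescaling $u_k:=(x_k-\bar x)/\|x_k-\bar x\|$ shows $\bar x$ is a local minimizer of $V_\tau$ on $X$: otherwise a limit direction $u\in T_X(\bar x)$ of unit norm would witness $V_\tau'(\bar x;u)\leq 0$ by semidifferentiability. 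An identical normalization argument applied to $f(\bar x,\cdot)$ together with ${\rm d}_yf(\bar x,\bar y)(h)<0$ shows $\bar y$ is a local maximizer of $f(\bar x,\cdot)$ on $Y(\bar x)$. Finally, for $x\in\mathbb{B}_\delta(\bar x)$ the inclusion $Y(x)\cap\mathbb{B}_{\kappa\|x-\bar x\|}(\bar y)\subseteq Y(x)\cap\mathbb{B}_{\kappa\delta}(\bar y)$ yields $V_\tau(x)\leq V_{\kappa\delta}(x)$, so $f(\bar x,\bar y)\leq V_{\kappa\delta}(x)$ holds, and Proposition \ref{prop3.1}(ii) closes the argument.

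The main obstacle, I expect, is in the necessary direction: the superderivative ${\rm d}^+f(\bar x,\bar y)(u,h)$ is a limsup over \emph{all} sequences $(t,u',h')\to(0,u,h)$, whereas the calm local minimax property only directly furnishes specific realizing sequences. For \eqref{x-non1} this is handled cleanly because a lower bound along one sequence automatically serves as a lower bound for the limsup. For \eqref{y-non1} the asymmetry is essential: without either the unconstrained case $Y(\bar x)=\mathbb{R}^m$ or semidifferentiability of $f(\bar x,\cdot)$, the superderivative could strictly exceed the limit along realizing sequences and the inequality could fail. Moreover, extracting $h\in DY(\bar x,\bar y)(u)$ from the attained maximizer $y_k$ is the step that genuinely exploits calmness through the radius $\kappa\delta$, which is precisely why the same argument would break down for general (non-calm) local minimax points.
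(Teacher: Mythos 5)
Your proposal is correct and follows essentially the same route as the paper: for (a) it extracts a maximizer $y_k$ of the localized value function along a tangent sequence, uses the calm radius to bound $\|y_k-\bar y\|/t_k$ and obtain $h\in DY(\bar x,\bar y)(u)$, and passes to the limsup, while the $y$-inequality is handled exactly as in the two cases the paper delegates to \cite[Propositions 2.6(i) and 2.8(i)]{Ma2023}; for (b) it combines Lemma \ref{prop4.1} with the standard first-order sufficiency contradiction and Proposition \ref{prop3.1}, which is precisely the paper's one-line citation chain unpacked. The only difference is that you re-derive the cited auxiliary facts inline rather than quoting them, and your closing remarks correctly identify where calmness and the semidifferentiability/unconstrainedness hypotheses are actually used.
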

\begin{proof}

(a) First, since $\bar y$ is a local maximum point of $f(\bar x,\cdot)$ on $Y(\bar x)$, \eqref{y-non1} or equivalently, {${\rm d}_y(-f)(\bar x,\bar y)(h)\geq 0$} follows from
\cite[Proposition 2.6 (i)]{Ma2023} (when $Y(\bar x)$ is the whole space) or \cite[Proposition 2.8 (i)]{Ma2023} (when $f(\bar x,\cdot)$ is semidifferentiable at $\bar y$).
We now prove (\ref{x-non1}). For this purpose, we let $u \in T_X(\bar{x})$. Then there exist $t_k \downarrow 0, u_k \to u$ such that $x_k:= \bar x+t_ku_k \in X$.

 Then since $(\bar x,\bar y)$ is a calm local minimax point, there exist $\kappa >0$ and a sequence
$$y_k \in S_{\tau}(x_k) = \argmax _{y \in Y(x_k)\cap \mathbb{B}_{\tau(\|x_k-\bar x\|)}(\bar{y})
	} f (x_k, y),$$
where $\tau(\cdot)$ is the function defined in the definition of the calm local minimax point,
such that $y_k \in Y(x_k), \|y_k-\bar y\| \leq \tau(\|x_k-\bar x\|)$ for sufficiently large $k$. Thus, by passing to a subsequence if necessary (without relabeling), there exists $h\in \mathbb{R}^m$ such that $h_k:=(y_k-\bar y)/t_k \to h$. By the definition of the graphical derivative, we have $h \in D Y(\bar{x}, \bar{y})(u)$.
Hence we have
\begin{eqnarray*}
\begin{aligned}
0 & \leq \limsup\limits_{k \to \infty} \frac{f (x_k, y_k )-f(\bar{x}, \bar{y})}{t_k}  \qquad \mbox{ by (\ref{equiv1})}\\
& \leq \limsup\limits_{{t \downarrow 0} \atop {(u',h') \to (u,h)}} \frac{f (\bar{x}+tu', \bar{y}+th' )-f(\bar{x}, \bar{y})}{t} \\
& = {\rm d}^+ f(\bar x,\bar y)(u,h).
\end{aligned}
\end{eqnarray*}

(b) By \cite[Proposition 2.8 (ii)]{Ma2023}, Lemma \ref{prop4.1}, and Proposition \ref{prop3.1} (iii), $(\bar x, \bar y)$ is a calm local minimax point to problem \eqref{minimax}.
\end{proof}

Given $(\bar x,\bar y)\in \operatorname{gph}Y$. Suppose that $f$ is semidifferentiable at $(\bar x,\bar y)$, $Y$ is semidifferentiable at $\bar x$ for $\bar y$ and calm around $(\bar x,\bar y)$. Based on Lemma \ref{prop4.1}, we can define the critical cone for the minimization problem $\min_{x\in X} V_\tau (x)$ at $\bar x$ as
 $$C_{\min}(\bar x; \bar y):=T_{ X}(\bar x) \cap \{u|\max_{h'\in DY(\bar x,\bar y)(u)}{\rm d} f(\bar x,\bar y)(u,h') \leq  0\}.$$ Moreover by \eqref{x-non1}, if $\bar x$ is a local solution of the minimization problem $\min_{x\in X} V_\tau (x)$, then
 the critical cone for the minimization problem $\min_{x\in X} V_\tau (x)$ becomes
 $$C_{\min}(\bar x; \bar y)=T_{ X}(\bar x) \cap \{u|\max_{h'\in DY(\bar x,\bar y)(u)}{\rm d} f(\bar x,\bar y)(u,h') =  0\}.$$
Similarly, we define the critical cone for the maximization problem as
$$C_{\max}(\bar y;\bar x):=T_{ Y(\bar x)}(\bar y) \cap \{h'|{\rm d}_y f(\bar x,\bar y)(h') \geq  0\}.$$
Moreover by (\ref{y-non1}), if $\bar y$ is  an optimal solution for the maximization problem $\max_{y\in Y(\bar x)} f(\bar x, y)$, then 
$$C_{\max}(\bar y;\bar x)=T_{ Y(\bar x)}(\bar y) \cap \{h'|{\rm d}_y f(\bar x,\bar y)(h') =  0\}.$$
Define
$$C(\bar x,\bar y;u):=D Y(\bar x,\bar y)(u)\cap \{h'| {\rm d}f(\bar x,\bar y)(u,h')=0\}.$$

Next, we give  second-order optimality conditions for the minimax problem \eqref{minimax} for the general case. Even in the simple case where $Y(x)$ is independent of $x$, our results improved the corresponding results    (\cite[Theorem 4.2]{Ma2023}) in that  the subderivatives of $f$ does not need to satisfy the separation properties. Note that the strongly twice epi-differentiability assumption reduces to the twice epi-differentiability required in \cite[Theorem 4.2]{Ma2023}.


\begin{theorem}[Second-order optimality conditions for the couple-constrained minimax problem]\label{main1}
Let $(\bar x, \bar y)\in X \times Y(\bar x)$. Suppose that $f$ is twice semidifferentiable at $(\bar x, \bar y)$, $Y$ is semidifferentiable at $\bar x$ for $\bar y$ and calm around $(\bar x,\bar y)$. 


\begin{itemize}
\item[(a)] Let $(\bar x,\bar y)$ be a calm local minimax point to problem \eqref{minimax}. 
 Then the first-order necessary optimality conditions 
 \begin{align}
  \sup\limits_{h \in D Y(\bar x,\bar y)(u)} {\rm d} f(\bar x,\bar y)(u,h) \geq 0 \quad &\text { for all } u\in T_{X}(\bar x), \label{firstordercond11}\\
{\rm d}_y f(\bar x,\bar y)(h)\leq 0 \quad  &\text { for all } h \in T_{ Y(\bar x)}(\bar y)\label{firstordercond21}
   \end{align}
hold. 
The second-order necessary condition for the maximum problem $\max_{y\in Y(\bar x)} f(\bar x, y)$ holds at $\bar y$, i.e., for any $h\in C_{\max}(\bar y;\bar x)$, we have
\begin{equation*}\label{necelower1}
{\rm d}^2_{yy} f(\bar x,\bar y)(h) -{\rm d}^2 \delta_{Y(\bar x)}(\bar y;{\rm d}_yf(\bar x,\bar y))(h) \leq 0.
\end{equation*}
 Moreover, suppose that for any $u\in C_{\min}(\bar x; \bar y)$, there exist $$-v^u\in N_X(\bar x)\cap \{u\}^\perp, (\xi_1^u,\xi_2^u) \in N_{\operatorname{gph}Y}(\bar x,\bar y)$$ such that  $\langle (v^u,0)+(\xi_1^u,\xi_2^u), (u',h') \rangle = {\rm d} f(\bar x,\bar y)(u',h')$ for any $(u',h') \in \mathbb{R}^n \times \mathbb{R}^m$, that the value ${\rm d}^2 \delta_X(\bar x;-v^u)(u)$ is finite and ${\rm d}^2 \delta_{\operatorname{gph}Y}((\bar x,\bar y);{\rm d}f(\bar x,\bar y))(u,h)$ is finite for any $h \in C(\bar x,\bar y;u)$. Then 
 for any $u\in C_{\min}(\bar x; \bar y)$, there exists
 $h\in C(\bar x,\bar y;u)$ 
  such that
 \begin{equation*}\label{eqn4.111}
{\rm d}^2 f(\bar x,\bar y)(u,h) + {\rm d}^2 \delta_X(\bar x;-v^u)(u) -{\rm d}^2 \delta_{\operatorname{gph}Y}((\bar x,\bar y);{\rm d}f(\bar x,\bar y))(u,h)  \geq 0.
\end{equation*}

\item[(b)] Suppose that the first-order necessary optimality conditions (\ref{firstordercond11})-(\ref{firstordercond21})  hold and the second-order sufficient condition for  problem $\max_{y\in Y(\bar x)} f(\bar x, y)$ holds at $\bar y$, i.e., for any $h\in T_{ Y(\bar x)}(\bar y) \cap \{h'|{\rm d}_y f(\bar x,\bar y)(h') =  0\} \setminus \{0\}$,
\begin{equation}\label{sufflower}
{\rm d}^2_{yy} f(\bar x,\bar y)(h) -{\rm d}^2 \delta_{Y(\bar x)}(\bar y;{\rm d}_yf(\bar x,\bar y))(h)<0.
\end{equation}
Moreover, suppose  that for any $$u\in  T_{ X}(\bar x) \cap \{u|\max_{h'\in DY(\bar x,\bar y)(u)}{\rm d} f(\bar x,\bar y)(u,h') =  0\},$$ there exist $-v^u\in N_X(\bar x)\cap \{u\}^\perp, (\xi_1^u,\xi_2^u) \in N_{\operatorname{gph}Y}(\bar x,\bar y)$ such that  $$\langle (v^u,0)+(\xi_1^u,\xi_2^u), (u',h') \rangle = {\rm d} f(\bar x,\bar y)(u',h') \qquad \forall (u',h') \in \mathbb{R}^n \times \mathbb{R}^m,$$  that the value ${\rm d}^2 \delta_X(\bar x;-v^u)(u)$ is finite and ${\rm d}^2 \delta_{\operatorname{gph}Y}((\bar x,\bar y);{\rm d}f(\bar x,\bar y))(u,h)$ is finite for any $h \in C(\bar x,\bar y;u)$ and  assume that \( \delta_{\operatorname{gph} Y} \) is strongly twice epi-differentiable at \( (\bar{x}, \bar{y}) \) for any  \( (\xi_1^u, \xi_2^u) \) satisfying the aforementioned assumptions.
If for any $u\in  T_{ X}(\bar x) \cap \{u|\max_{h'\in DY(\bar x,\bar y)(u)}{\rm d} f(\bar x,\bar y)(u,h') =  0\} \setminus \{0\}$, 
\begin{equation}\label{cxlower3-non-new1}
\sup_{h \in C(\bar x,\bar y;u)}  \left\{ {\rm d}^2 f(\bar x,\bar y)(u,h) + {\rm d}^2 \delta_X(\bar x;-v^u)(u) -{\rm d}^2 \delta_{\operatorname{gph}Y}((\bar x,\bar y);{\rm d}f(\bar x,\bar y))(u,h) \right\} >0,
\end{equation}
then $(\bar x,\bar y)$ is a calm local minimax point to problem \eqref{minimax} and  the  following second-order growth condition holds: there exist $\delta_0>0$,   $\varepsilon>0, \mu > 0, \kappa>0$
 such that  for any $x \in X \cap \mathbb{B}_{\delta_0}(\bar x)$ and $y \in Y(\bar x)\cap \mathbb{B}_{\delta_0}(\bar y)$, we have
\begin{equation*}
f(\bar x, y)+\varepsilon\|y-\bar y\|^2  \leq f(\bar x, \bar y) \leq  \max _{y^{\prime}  \in  Y(x)\cap \mathbb{B}_{\kappa\|x-\bar x\|}(\bar y)} f(x, y^{\prime}) - \mu \|x-\bar x\|^2.
\end{equation*}
\end{itemize}
\end{theorem}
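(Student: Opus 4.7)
The plan is to exploit the reformulation from Proposition~\ref{prop3.1}: $(\bar x, \bar y)$ is a calm local minimax point if and only if $\bar y$ is a local maximizer of $f(\bar x, \cdot)$ over $Y(\bar x)$ and $\bar x$ is a local minimizer of the value function $V_\tau$ over $X$ for the radius $\tau(\delta) = \kappa\delta$. This splits Theorem~\ref{main1} into separate inner-max and outer-min second-order analyses, connected through the directional derivative of $V_\tau$ supplied by Lemma~\ref{prop4.1}.

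For the necessary part~(a), the first-order conclusions follow directly from Theorem~\ref{1st-non1}(a). The inner second-order condition follows by applying the classical second-order necessary condition to $\bar y$ as a local minimizer of $-f(\bar x, \cdot)+\delta_{Y(\bar x)}$, using twice semidifferentiability of $f(\bar x,\cdot)$ together with the sign identity~\eqref{twice-epi}. For the outer direction, fix $u \in C_{\min}(\bar x;\bar y)$; since $u \in T_X(\bar x)$, pick $t_k \downarrow 0$ and $u_k \to u$ with $x_k := \bar x + t_k u_k \in X$, and use the calm local minimax property combined with Proposition~\ref{prop2.1}(iii) to pick $y_k \in S_\tau(x_k)$ satisfying $\|y_k - \bar y\| \leq \kappa t_k \|u_k\|$. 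Then $h_k := (y_k-\bar y)/t_k$ is bounded, and after extracting a subsequence one obtains $h_k \to h \in DY(\bar x, \bar y)(u)$. The chain $f(\bar x, \bar y) \leq V_\tau(x_k) = f(x_k, y_k)$ together with semidifferentiability of $f$ and $u \in C_{\min}$ then pins $h$ inside $C(\bar x, \bar y; u)$.

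To derive the second-order inequality, I would expand $0 \leq f(x_k,y_k) - f(\bar x,\bar y)$ using twice semidifferentiability of $f$, substitute the multiplier representation $\langle (v^u,0)+(\xi_1^u,\xi_2^u),(u',h')\rangle = {\rm d} f(\bar x,\bar y)(u',h')$, and divide by $\tfrac{1}{2} t_k^2$. The linear-in-$1/t_k$ terms split as $\tfrac{2\langle v^u,u_k\rangle}{t_k}$ and $\tfrac{2\langle (\xi_1^u,\xi_2^u),(u_k,h_k)\rangle}{t_k}$, which are controlled from below in liminf (via $x_k \in X$ and $(x_k,y_k) \in \operatorname{gph} Y$) by ${\rm d}^2 \delta_X(\bar x;-v^u)(u)$ and ${\rm d}^2 \delta_{\operatorname{gph}Y}((\bar x,\bar y);(\xi_1^u,\xi_2^u))(u,h)$ respectively. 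Passing to a further subsequence along which both ratios converge (prevented from escaping to $\pm\infty$ by the finiteness hypotheses), and then converting the representation at $(\xi_1^u,\xi_2^u)$ into the stated one at ${\rm d} f(\bar x,\bar y)$ via the relation $-v^u \perp u$, which annihilates the extra $v^u$-contribution at leading order, yields the inequality claimed in the theorem.

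For the sufficient part~(b), argue by contradiction on both sides of the minimax inequality. Quadratic growth of $-f(\bar x,\cdot)$ on $Y(\bar x)$ near $\bar y$ follows from~\eqref{sufflower} by the standard second-order sufficient argument for the inner maximization. For the outer growth, suppose that no $\mu > 0$ works; then there exists a sequence $x_k \in X$ with $x_k \to \bar x$ and $V_\tau(x_k) - f(\bar x,\bar y) < k^{-1}\|x_k-\bar x\|^2$. Set $t_k := \|x_k-\bar x\|$ and $u_k := (x_k-\bar x)/t_k$, extract a limit direction $u$, and choose $y_k \in S_\tau(x_k)$ to produce $h \in DY(\bar x,\bar y)(u)$; the asymptotic gap and the first-order analysis force $u$ and $h$ to lie in the relevant critical cones. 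Strong twice epi-differentiability of $\delta_{\operatorname{gph}Y}$ guarantees that the liminf defining ${\rm d}^2 \delta_{\operatorname{gph}Y}((\bar x,\bar y);{\rm d} f(\bar x,\bar y))(u,h)$ is attained along a matching sequence, so the limit of the rescaled expansion from part~(a) contradicts~\eqref{cxlower3-non-new1}; Proposition~\ref{prop3.1}(iv) then yields the calm local minimax conclusion. The main obstacle is the sequential bookkeeping of part~(a): aligning the two multiplier-decomposed linear terms with their corresponding second subderivatives of indicator functions while correctly replacing $(\xi_1^u,\xi_2^u)$ by ${\rm d} f(\bar x,\bar y)$ using $-v^u \perp u$; in part~(b), the strong twice epi-differentiability hypothesis plays exactly the role of turning the defining liminf into a genuine limit along the contradicting sequence.
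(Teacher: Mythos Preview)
Your overall strategy matches the paper's: split via Proposition~\ref{prop3.1}, handle the inner max by \cite[Proposition~2.9]{Ma2023}, and for the outer min extract a direction $h$ from maximizers $y_k\in S_\tau(x_k)$ in part~(a) while arguing by contradiction in part~(b). However, there are two genuine technical gaps where the sequence choices are wrong and the argument would not go through.

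\textbf{Part (a).} You pick $t_k\downarrow 0$, $u_k\to u$ with $x_k:=\bar x+t_ku_k\in X$ merely from the definition of $T_X(\bar x)$ and then try to control $\tfrac{2\langle v^u,u_k\rangle}{t_k}$ by passing to a subsequence, asserting that finiteness of ${\rm d}^2\delta_X(\bar x;-v^u)(u)$ prevents escape to $\pm\infty$. This is false: finiteness of a liminf does not bound the ratio along an arbitrary feasible sequence. Worse, even if the ratio converged to some value $A$, you would only know $A\ge {\rm d}^2\delta_X(\bar x;-v^u)(u)$ (subsequential limit $\ge$ liminf), whereas the chain $0\le {\rm d}^2 f + A + B$ with $B\le -{\rm d}^2\delta_{\operatorname{gph}Y}$ requires $A\le {\rm d}^2\delta_X$ to reach the claimed inequality. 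The paper's fix is to choose $(t_k,u_k)$ from the outset so that the limit $\lim_k \tfrac{2\langle v^u,u_k\rangle}{t_k}$ \emph{equals} ${\rm d}^2\delta_X(\bar x;-v^u)(u)$; this is precisely where the finiteness hypothesis is used, and it makes $A$ hit the target value exactly. The $\operatorname{gph}Y$ term is then bounded above via limsup as you indicate.

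\textbf{Part (b).} You misidentify the role of strong twice epi-differentiability. Hypothesis~\eqref{cxlower3-non-new1} is a \emph{supremum} over $h\in C(\bar x,\bar y;u)$, so the contradiction must allow $h$ to be chosen freely. You instead extract $h$ from $y_k\in S_\tau(x_k)$, which produces one specific direction that need not witness the positivity in~\eqref{cxlower3-non-new1}; and the ``matching sequence'' promised by strong twice epi-differentiability for that $h$ is not the sequence $(u_k,h_k)$ you already have, so the expansion does not collapse as you suggest. The paper proceeds the other way around: given the contradicting sequence $x_k\in X$ (so $(t_k,u_k)$ is fixed), for an \emph{arbitrary} $h\in C(\bar x,\bar y;u)$ strong twice epi-differentiability supplies $h_k\to h$ with $\bar y+t_kh_k\in Y(\bar x+t_ku_k)$ along which ${\rm d}^2\delta_{\operatorname{gph}Y}$ is attained. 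Since $y_k':=\bar y+t_kh_k$ is feasible, $V_\delta(x_k)\ge f(x_k,y_k')$, and expanding this lower bound yields ${\rm d}^2(V_\delta+\delta_X)(\bar x;0)(u)\ge {\rm d}^2 f(\bar x,\bar y)(u,h)+{\rm d}^2\delta_X(\bar x;-v^u)(u)-{\rm d}^2\delta_{\operatorname{gph}Y}((\bar x,\bar y);{\rm d}f(\bar x,\bar y))(u,h)$ for every such $h$, hence for the sup, contradicting the assumed failure of quadratic growth.
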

\begin{proof}

(a) First, by \cite[Proposition 2.9 (i)]{Ma2023}, we have the first- and second-order optimality conditions for the maximization problem.

Second, since $(\bar x,\bar y)$ is a calm local minimax point to problem \eqref{minimax}, by Proposition \ref{prop3.1}, there exist a $\delta_0 >0$ and a function $\tau: \mathbb{R}_{+} \rightarrow \mathbb{R}_{+}$ which is calm at $0$, such that for any $x \in X\cap \mathbb{B}_{\delta_0} (\bar x)$, we have
\begin{equation}
 f(\bar{x}, \bar{y}) \leq \max _{y^{\prime}  \in  Y(x)\cap \mathbb{B}_{\tau(\|x-\bar x\|)}(\bar y)} f\left(x, y^{\prime}\right) .\label{eqn4.211}
\end{equation}
Pick  $u\in C_{\min}(\bar x; \bar y)$ and $v^u$ given in the assumption.  By definition of the second subderivative, there exist $t_k\downarrow 0$, $u_k\to u$ such that $x_k:=\bar x+t_ku_k \in X$ and
\begin{equation}\label{finitelim}
{\rm d}^2 \delta_X(\bar x;-v^u)(u) = \lim\limits_{k\to \infty}\frac{t_k \langle v^u,u_k \rangle}{\frac{1}{2}t_k^2}.\end{equation}

 By the assumptions, we have $-v^u\in N_X(\bar x)\cap \{u\}^\perp, (\xi_1^u,\xi_2^u) \in N_{\operatorname{gph}Y}(\bar x,\bar y)$ such that   $\langle (v^u,0)+(\xi_1^u,\xi_2^u), (u',h') \rangle = {\rm d} f(\bar x,\bar y)(u',h')$ for any $(u',h') \in \mathbb{R}^n \times \mathbb{R}^m$, and   the  limit (\ref{finitelim})  is finite.

Since $\tau $ is calm,  there exist $\kappa>0$ and
\begin{equation*}
y_k \in \argmax _{y^{\prime}  \in  Y(x_k)\cap \mathbb{B}_{\tau(\|x_k-\bar x\|)}(\bar y)} f\left(x_k, y^{\prime}\right),
\end{equation*}
 such that $y_k \in Y(x_k), \|y_k-\bar y\| \leq \kappa \|x_k-\bar x\|$. Thus, by passing to a subsequence if necessary (without relabeling), there exists $h\in \mathbb{R}^m$ such that $h_k:=(y_k-\bar y)/t_k \to h$. By the definition of the graphical derivative, we have $h\in D Y(\bar x,\bar y)(u)$. 

Thus,
\begin{eqnarray*}\label{}
\begin{aligned}
0 & \leq \limsup\limits_{k \to \infty} \frac{f\left(\bar{x}+t_k u_k, \bar{y}+t_kh_k\right)-f(\bar{x}, \bar{y})}{\frac{1}{2}t_k^2} \quad \mbox{ by } \eqref{eqn4.211}\\
& =\limsup\limits_{k \to \infty} \frac{f\left(\bar{x}+t_k u_k, \bar{y}+t_kh_k\right)-f(\bar{x}, \bar{y})-t_k{\rm d}f(\bar{x}, \bar{y})(u_k,h_k)+t_k{\rm d}f(\bar{x}, \bar{y})(u_k,h_k)}{\frac{1}{2}t_k^2}  \\
& = {\rm d}^2 f(\bar x,\bar y)(u,h) + \lim\limits_{k \to \infty} \frac{t_k \langle v^u,u_k \rangle}{\frac{1}{2}t_k^2} + \limsup\limits_{k \to \infty} \frac{t_k \langle (\xi_1^u,\xi_2^u),(u_k,h_k) \rangle}{\frac{1}{2}t_k^2} \\
& \leq {\rm d}^2 f(\bar x,\bar y)(u,h) + {\rm d}^2 \delta_X(\bar x;-v^u)(u) + \limsup\limits_{{t \downarrow 0,(u',h') \to (u,h)} \atop {\bar y+th'\in Y(\bar x+tu')}} \frac{t \langle (\xi_1^u,\xi_2^u),(u',h') \rangle}{\frac{1}{2}t^2} \\
& = {\rm d}^2 f(\bar x,\bar y)(u,h) + {\rm d}^2 \delta_X(\bar x;-v^u)(u) - \liminf\limits_{{t \downarrow 0,(u',h') \to (u,h)} \atop {\bar y+th'\in Y(\bar x+tu')}} \frac{-t \langle (\xi_1^u,\xi_2^u),(u',h') \rangle}{\frac{1}{2}t^2} \\
& = {\rm d}^2 f(\bar x,\bar y)(u,h) + {\rm d}^2 \delta_X(\bar x;-v^u)(u) -{\rm d}^2 \delta_{\operatorname{gph}Y}((\bar x,\bar y);(\xi_1^u,\xi_2^u))(u,h),
\end{aligned}
\end{eqnarray*}
where the second equality follows from the assumption that
$\langle (v^u,0)+(\xi_1^u,\xi_2^u), (u_k,h_k) \rangle = {\rm d} f(\bar x,\bar y)(u_k,h_k).$

It now remains to show that $h\in C(\bar x,\bar y,u)$. Since   $\sup\limits_{h' \in D Y(\bar x,\bar y)(u)} {\rm d} f(\bar x,\bar y)(u,h') = 0$, we have ${\rm d} f(\bar x,\bar y)(u,h) \leq 0$. On the other hand,
\begin{eqnarray*}\label{}
\begin{aligned}
0  & \leq  \lim\limits_{k \to \infty} \frac{f(x_k,y_k)-f(\bar x,\bar y)}{t_k} = {\rm d} f(\bar x,\bar y)(u,h).
\end{aligned}
\end{eqnarray*}
Thus, ${\rm d} f(\bar x,\bar y)(u,h) = 0$. Moreover by the assumption, we have
$${\rm d} f(\bar x,\bar y)(u,h)=\langle v^u, u\rangle +\langle (\xi_1^u,\xi_2^u),(u,h) \rangle=\langle (\xi_1^u,\xi_2^u),(u,h) \rangle.$$
Therefore $h\in C(\bar x,\bar y,u)$.

(b) Since $f$ is twice semidifferentiable at $(\bar x,\bar y)$, by (\ref{eqn2.2new}) and (\ref{twice-epi}), we have
$$-{\rm d}_y f(\bar x,\bar y)(h)={\rm d}_y (-f)(\bar x,\bar y)(h), \quad -{\rm d}_{yy}^2 f(\bar x,\bar y)(h)={\rm d}_{yy}^2 (-f)(\bar x,\bar y)(h).$$ By \cite[Proposition 2.9]{Ma2023}, the second-order sufficient condition for the maximum problem implies that $\bar y$ is a local maximizer of $f(\bar x,\cdot)$ on $ Y(\bar x)$ and the second-order growth condition holds. Thus, there exist $\delta_0>0, \varepsilon>0$ such that for any $\delta \in (0,\delta_0], y\in Y(\bar x)$ satisfying $\|y-\bar{y}\| \leq \delta$, we have  $f(\bar x,y) +\varepsilon\|y-\bar y\|^2\leq f(\bar x,\bar y)$. Let $\tau(\delta):=\delta$, then $\tau(\delta) \rightarrow 0$ as $\delta \downarrow 0$.
Since
$V_\delta(x)=\max_{y  \in  Y(x)\cap \mathbb{B}_\delta(\bar y)}f(x,y),$
we have
\begin{equation}\label{eqn4.13} V_\delta( x)\geq f(x,y) \ \ \forall (x,y)\in \mathbb{B}_\delta(\bar x,\bar y)\cap (X\times Y(x)), \qquad \mbox{ and } V_\delta(\bar x)=f(\bar x,\bar y).\end{equation}
We break the rest proof for (b) into two steps.

{\bf Step 1:} We show that for any fixed $u \in C_{\min}(\bar x; \bar y)$, with $-v^u$ and $(\xi_1^u, \xi_2^u)$ as the vectors given in the assumptions, the following holds for any $\delta \in (0, \delta_0]$:
\begin{equation}\label{lowerbd2}
	\begin{aligned}
		&{\rm d}^2 (V_{\delta}+\delta_X)(\bar x;0)(u) \\
		\geq& \sup_{h \in C_{y}(\bar x,\bar y,u) } \left\{ {\rm d}^2 f(\bar x,\bar y)(u,h) + {\rm d}^2 \delta_X(\bar x;-v^u)(u) -{\rm d}^2 \delta_{\operatorname{gph}Y}((\bar x,\bar y);(\xi_1^u,\xi_2^u))(u,h) \right\}.
	\end{aligned}
\end{equation}
Since $u \in T_X(\bar x) $, by definition of the second subderivative, there exist $t_k\downarrow 0$, $u_k\to u$ such that  $\bar x+t_ku_k \in X$ and
\begin{equation}\label{main-1}
	{\rm d}^2 (V_{\delta}+\delta_X)(\bar x;0)(u)
	= \lim\limits_{k \to \infty} \frac{V_{\delta}(\bar{x}+t_ku_k)-V_{\delta}(\bar{x})}{\frac{1}{2}t_k^2}.
\end{equation}

Since $\delta_{\operatorname{gph} Y}$ is strongly twice epi-differentiable at $(\bar x,\bar y)$ for $(\xi_1^u,\xi_2^u)$, for any $h \in C(\bar x,\bar y,u)$,  there exists a sequence $ h_k\rightarrow h$ such that 
$$ {\rm d}^2 \delta_{\operatorname{gph} Y} ((\bar x,\bar y); {\rm d}f(\bar x,\bar y))(u,h)=\lim_{k\rightarrow \infty}\frac{\delta_{\operatorname{gph}Y}(\bar x+t_ku_k,\bar y+t_kh_k)  - t_k \langle (\xi_1^u,\xi_2^u), (u_k,h_k)\rangle}{\frac{1}{2}{t_k}^2} .$$
Since ${\rm d}^2 \delta_{\operatorname{gph} Y} ((\bar x,\bar y); {\rm d}f(\bar x,\bar y))(u,h)$ is finite by the assumption, we have $\bar y+t_kh_k \in Y(\bar x+t_ku_k)$. Then
\begin{equation}\label{eqn4.16}-{\rm d}^2 \delta_{\operatorname{gph} Y}((\bar x,\bar y);{\rm d}f(\bar x,\bar y))(u,h)=\lim\limits_{k \to \infty}\frac{t_k\langle (\xi_1^u,\xi_2^u), (u_k,h_k)\rangle}{\frac{1}{2}{t_k}^2}.\end{equation}
Hence we have
\begin{eqnarray*}\label{}
	\begin{aligned}
		& \quad\ {\rm d}^2 (V_{\delta}+\delta_X)(\bar x;0)(u)  = \liminf\limits_{k \to \infty} \frac{V_{\delta}\left(\bar{x}+t_ku_k\right)-V_{\delta}(\bar{x})}{\frac{1}{2} t_k^2}  \quad \mbox{ by } \eqref{main-1}\\
		&\geq \liminf\limits_{k \to \infty} \frac{f\left(\bar{x}+t_k u_k, \bar{y}+t_k h_k\right)-f(\bar{x}, \bar{y})}{\frac{1}{2} t_k^2}\quad \mbox{ by } \eqref{eqn4.13}\\
		&= \liminf\limits_{k \to \infty} \frac{f\left(\bar{x}+t_k u_k, \bar{y}+t_k h_k\right)-f(\bar{x}, \bar{y})-t_k{\rm d}f(\bar{x}, \bar{y})(u_k,h_k)}{\frac{1}{2} t_k^2} \\
		& \quad\quad\quad\quad + \frac{t_k \langle v^u,u_k \rangle}{\frac{1}{2}t_k^2} +\frac{t_k \langle (\xi_1^u,\xi_2^u), (u_k,h_k)\rangle}{\frac{1}{2}t_k^2}  \\
		&\geq {\rm d}^2 f(\bar x,\bar y)(u,h) + \liminf_{k \to \infty} \frac{t_k \langle v^u,u_k \rangle}{\frac{1}{2}{t_k}^2} +  \liminf_{k \to \infty}\frac{t_k \langle (\xi_1^u,\xi_2^u), (u_k,h_k)\rangle}{\frac{1}{2}{t_k}^2}    \\
&\geq {\rm d}^2 f(\bar x,\bar y)(u,h) + {\rm d}^2 \delta_X(\bar x;-v^u)(u) -{\rm d}^2 \delta_{\operatorname{gph}Y}((\bar x,\bar y);{\rm d}f(\bar x,\bar y))(u,h),
	\end{aligned}
\end{eqnarray*}
where the second inequality holds since ${\rm d}^2 \delta_{\operatorname{gph} Y}((\bar x,\bar y);{\rm d}f(\bar x,\bar y))(u,h)$ is finite.
Thus (\ref{lowerbd2}) holds.

{\bf Step 2:} We show that for any $\delta \in (0,\delta_0]$ and $x \in X$ satisfying $\|x-\bar x\| \leq \delta$, we have
\begin{equation}\label{second-order-growth} \max _{y^{\prime}  \in  Y(x)\cap \mathbb{B}_\delta(\bar y)} f\left(x, y^{\prime}\right) - f(\bar x,\bar y) \geq \beta \|x-\bar x\|^2\end{equation}
for some $\beta > 0$.

To the contrary, suppose that for some $\delta \in (0,\delta_0]$ and $x_k \in X$  with $\|x_k-\bar x\| \leq \delta$,
\begin{equation}\label{contradiction2}
	\max _{y^{\prime}  \in  Y(x_k)\cap \mathbb{B}_\delta(\bar y)} f\left(x_k, y^{\prime}\right)-f(\bar x,\bar y) \leq o(t_k^2),
\end{equation}
where $t_k:= \|x_k-\bar x\|$. Let $u_k:=(x_k-\bar x)/\|x_k-\bar x\|$, we have $t_k\downarrow 0$ and $\|u_k\|=1$. By passing to a subsequence if necessary, we may assume that $u_k \to u$ with $\|u\|=1$. We have $u \in T_{X}(\bar x) \setminus \{0\}$.

The assumed first-order optimality condition gives us $\sup\limits_{h \in D Y(\bar x,\bar y)(u)} {\rm d} f(\bar x,\bar y)(u,h) \geq 0$.
If $\sup\limits_{h \in D Y(\bar x,\bar y)(u)} {\rm d} f(\bar x,\bar y)(u,h) >0$,
then there exists $h \in D Y(\bar x,\bar y)(u)$ such that ${\rm d} f(\bar x,\bar y)(u,h) >0$ and there exists $h_k \to h$ such that $y_k:=\bar y+t_kh_k \in Y(\bar x+t_ku_k)$ and $\|h_k\| \leq \kappa \|u_k\|$ for any $k$ (by the semidifferentiability of $Y$). Then,
\begin{eqnarray*}\max _{y^{\prime}  \in  Y(x_k)\cap \mathbb{B}_{\delta}(\bar y)} f\left(x_k, y^{\prime}\right)-f(\bar x,\bar y) & \geq & f(x_k,y_k)-f(\bar x,\bar y)\\
& \geq & t_k {\rm d} f(\bar x,\bar y)(u,h) +o(t_k) > o(t_k) \geq o(t_k^2),\end{eqnarray*}
which is a contradiction to \eqref{contradiction2}.

If $\sup\limits_{h \in D Y(\bar x,\bar y)(u)} {\rm d} f(\bar x,\bar y)(u,h) = 0$, by \eqref{lowerbd2} and (\ref{main-1}), we have
\begin{equation}\label{theta}
	\begin{aligned}
		\max _{y^{\prime}  \in  Y(x_k)\cap \mathbb{B}_{\delta}(\bar y)} f\left(x_k, y^{\prime}\right)-f(\bar x,\bar y) & = V_{\delta}(x_k) - V_{\delta}(\bar x)  \geq \frac{1}{2}t_k^2 \theta(\bar x,\bar y, u)+o(t_k^2),
	\end{aligned}
\end{equation}
where 
\begin{eqnarray*}
\lefteqn{\theta(\bar x,\bar y, u):=}\\
&&  \sup_{h \in C(\bar x,\bar y,u)} \left\{ {\rm d}^2 f(\bar x,\bar y)(u,h) + {\rm d}^2 \delta_X(\bar x;-v^u)(u) -{\rm d}^2 \delta_{\operatorname{gph}Y}((\bar x,\bar y);{\rm d}f(\bar x,\bar y))(u,h) \right\}.\end{eqnarray*}
It follows from \eqref{cxlower3-non-new1} that $\theta(\bar x,\bar y, u)>0$. Hence we have a contradiction to \eqref{contradiction2} and consequently (\ref{second-order-growth}) holds.

Combining with the fact that $\bar y$ is a  maximizer of $f(\bar x,\cdot)$ on $ Y\cap \mathbb{B}_\delta(\bar y)$, it follows that
$$f(\bar x, y) \leq f(\bar x,\bar y)\leq \max _{y^{\prime}  \in  Y(x)\cap \mathbb{B}_\delta(\bar y)} f\left(x, y^{\prime}\right) \quad \forall (x,y) \in \mathbb{B}_\delta(\bar x,\bar y) \cap (X\times Y(x)).$$
Thus, $(\bar x, \bar y)$ is a calm local minimax point to problem \eqref{minimax}.
\end{proof}

In what follows, we show that the strongly twice epi-differentiability assumption can be replaced by the twice directional differentiability of  the localized value function $V_\delta$.

\begin{corollary}[Sufficient optimality conditions without the strongly twice epidifferentiability]\label{sufficient2}
	Let $(\bar x, \bar y)\in X \times Y(\bar x)$. Suppose that $f$ is twice semidifferentiable at $(\bar x, \bar y)$, $Y$ is semidifferentiable at $\bar x$ for $\bar y$ and calm around $(\bar x,\bar y)$. Suppose that the first-order necessary optimality conditions \eqref{firstordercond11}-\eqref{firstordercond21}  hold and the second-order sufficient condition for  problem $\max_{y\in Y(\bar x)} f(\bar x, y)$ \eqref{sufflower} holds at $\bar y$.
		If there exists $\delta_0$ such that  for any $\delta \in (0,\delta_0]$, the value function $V_{\delta}$ is Lipschitz continuous at $\bar x$,  and is twice directional differentiable at $\bar x$ in directions  $u\in  T_{ X}(\bar x) \cap \{u|\max_{h'\in DY(\bar x,\bar y)(u)}{\rm d} f(\bar x,\bar y)(u,h') =  0\}$, and for any  $u\in  T_{ X}(\bar x) \cap \{u|\max_{h'\in DY(\bar x,\bar y)(u)}{\rm d} f(\bar x,\bar y)(u,h') =  0\} \setminus \{0\}$, there exists $v^u$ such that $V^{\prime}_{\delta}(\bar x;u') = \langle v^u, u' \rangle$ for all $u'$, and
		\begin{equation}\label{cor4.1suf}
			V^{''}_{\delta}(\bar x;u) + {\rm d}^2 \delta_X(\bar x;-v^u)(u) > 0.
			\end{equation}
		Then $(\bar x,\bar y)$ is a calm local minimax point to problem \eqref{minimax} and  the  second-order growth condition holds.
\end{corollary}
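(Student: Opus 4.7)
The plan is to follow the contradiction scheme in Step 2 of the proof of Theorem~\ref{main1}(b), replacing the lower bound \eqref{lowerbd2} (which was derived from the strongly twice epi-differentiability of $\delta_{\operatorname{gph}Y}$) by a direct Taylor-style lower bound on $V_{\delta}(x_k)-V_{\delta}(\bar x)$ obtained from the Lipschitz continuity and the twice directional differentiability of $V_{\delta}$ together with the linearity of $V'_{\delta}(\bar x;\cdot)=\langle v^u,\cdot\rangle$. The left inequality of the calm local minimax condition, together with the quadratic growth in $y$, follows from \eqref{sufflower} through \cite[Proposition~2.9]{Ma2023} exactly as in Theorem~\ref{main1}(b); so only the right-hand second-order growth on the outer variable needs proof.

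For that outer growth, I would argue by contradiction: assume that for some $\delta\in(0,\delta_0]$ there exist $x_k\in X\cap \mathbb{B}_\delta(\bar x)$ with $t_k:=\|x_k-\bar x\|\downarrow 0$ and $V_{\delta}(x_k)-V_{\delta}(\bar x)\le o(t_k^2)$. Set $u_k:=(x_k-\bar x)/t_k$, pass to a subsequence so that $u_k\to u$ with $\|u\|=1$ and $u\in T_X(\bar x)\setminus\{0\}$. If $u$ is not critical, i.e.\ $\sup_{h\in DY(\bar x,\bar y)(u)}{\rm d}f(\bar x,\bar y)(u,h)>0$, I would use the semidifferentiability and calmness of $Y$ (Proposition~\ref{prop2.1}) to produce $h_k\to h$ with $\bar y+t_kh_k\in Y(x_k)$ and obtain the first-order contradiction $V_{\delta}(x_k)-V_{\delta}(\bar x)\ge t_k{\rm d}f(\bar x,\bar y)(u,h)+o(t_k)$, exactly as in Theorem~\ref{main1}(b). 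Otherwise $u$ lies in the critical set on which $V_{\delta}$ is assumed twice directionally differentiable with $V'_{\delta}(\bar x;\cdot)=\langle v^u,\cdot\rangle$; using Lipschitz continuity to pass from $u$ to $u_k$ in the first-order remainder, together with the twice directional differentiability along $u$, I would show
\begin{equation*}
\liminf_{k\to\infty}\frac{V_{\delta}(x_k)-V_{\delta}(\bar x)-t_k\langle v^u,u_k\rangle}{\tfrac12 t_k^2}\;\ge\; V''_{\delta}(\bar x;u).
\end{equation*}
Since $x_k=\bar x+t_ku_k\in X$, the definition of the second subderivative of an indicator gives
\begin{equation*}
\liminf_{k\to\infty}\frac{t_k\langle v^u,u_k\rangle}{\tfrac12 t_k^2}\;\ge\;{\rm d}^2\delta_X(\bar x;-v^u)(u),
\end{equation*}
so adding the two yields
\begin{equation*}
\liminf_{k\to\infty}\frac{V_{\delta}(x_k)-V_{\delta}(\bar x)}{\tfrac12 t_k^2}\;\ge\; V''_{\delta}(\bar x;u)+{\rm d}^2\delta_X(\bar x;-v^u)(u)\;>\;0
\end{equation*}
by \eqref{cor4.1suf}, contradicting $V_{\delta}(x_k)-V_{\delta}(\bar x)\le o(t_k^2)$. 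Combining this outer quadratic growth with the inner quadratic growth and invoking Proposition~\ref{prop3.1}(iii) with $\tau(\delta)\equiv\kappa\delta$ finishes the proof.

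The principal technical obstacle is justifying the second-order expansion along the \emph{varying} sequence $u_k\to u$ rather than only along the fixed critical direction $u$. Twice directional differentiability supplies the expansion only when $u_k\equiv u$, while the Lipschitz hypothesis controls $|V_{\delta}(\bar x+t_ku_k)-V_{\delta}(\bar x+t_ku)|$ at rate $O(t_k\|u_k-u\|)$, which is merely $o(t_k)$ and not automatically $o(t_k^2)$. The key step is therefore to absorb the discrepancy $\langle v^u,u_k-u\rangle$ into the first-order term on the left-hand side (which is possible precisely because the directional derivative $V'_{\delta}(\bar x;\cdot)$ is the same linear functional $\langle v^u,\cdot\rangle$ in every direction), leaving a pure second-order remainder in the direction $u$ that is governed by $V''_{\delta}(\bar x;u)$; this is the role played by the linearity assumption on $V'_{\delta}(\bar x;\cdot)$ in the corollary and the reason it replaces strong twice epi-differentiability.
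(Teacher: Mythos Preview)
Your proposal matches the paper's proof essentially line by line: the $y$-side is handled via \cite[Proposition 2.9]{Ma2023}, the outer growth is argued by the same contradiction with $x_k\in X$, $t_k=\|x_k-\bar x\|\downarrow 0$, $u_k\to u\in T_X(\bar x)\setminus\{0\}$, the non-critical case is disposed of using the semidifferentiability and calmness of $Y$ exactly as you describe, and in the critical case the paper performs the same splitting $V_\delta(\bar x+t u')-V_\delta(\bar x)=[\text{second-order remainder}]+tV'_\delta(\bar x;u')=[\text{second-order remainder}]+t\langle v^u,u'\rangle$ and bounds the linear piece below by ${\rm d}^2\delta_X(\bar x;-v^u)(u)$. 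The only cosmetic difference is that the paper packages the critical-case estimate as a lower bound on ${\rm d}^2(V_\delta+\delta_X)(\bar x;0)(u)$ taken along an achieving sequence $(t_j,u_j)$, rather than working directly with the contradiction sequence $(t_k,u_k)$; since the former is by definition the infimum over all admissible sequences this is equivalent, and regarding the second-order passage from $u$ to varying $u_j$ that you flag in your final paragraph, the paper does not supply a separate argument but simply writes the equality with $V''_\delta(\bar x;u)$ at that step, using the linearity $V'_\delta(\bar x;\cdot)=\langle v^u,\cdot\rangle$ precisely as you propose.
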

\begin{proof}
The maximization with respect to $\bar y$ can be established in a manner similar to Theorem \ref{main1} (b), by using \eqref{firstordercond21} and \eqref{sufflower}. 
	Next, we consider the minimization with respect to $\bar x$. 
	Analogous to Lemma \ref{prop4.1}, one can show that  for any $u \in \mathbb{R}^n$,
	$$V^{\prime}_{\delta}(\bar x;u) = \sup\limits_{h \in D Y(\bar x,\bar y)(u)} {\rm d} f(\bar x,\bar y)(u,h).$$

Next, we show that for any $\delta \in (0,\delta_0]$ and $x \in X$ satisfying $\|x-\bar x\| \leq \delta$, we have
\begin{equation}\label{second-order-growth3} \max _{y^{\prime}  \in  Y(x)\cap \mathbb{B}_\delta(\bar y)} f\left(x, y^{\prime}\right) - f(\bar x,\bar y) \geq \beta \|x-\bar x\|^2\end{equation}
for some $\beta > 0$.

To the contrary, suppose that for some $\delta \in (0,\delta_0]$ and $x_k \in X$  with $\|x_k-\bar x\| \leq \delta$,
\begin{equation}\label{contradiction3}
	\max _{y^{\prime}  \in  Y(x_k)\cap \mathbb{B}_\delta(\bar y)} f\left(x_k, y^{\prime}\right)-f(\bar x,\bar y) \leq o(t_k^2),
\end{equation}
where $t_k:= \|x_k-\bar x\|$. Let $u_k:=(x_k-\bar x)/\|x_k-\bar x\|$, we have $t_k\downarrow 0$ and $\|u_k\|=1$. By passing to a subsequence if necessary, we may assume that $u_k \to u$ with $\|u\|=1$. We have $u \in T_{X}(\bar x) \setminus \{0\}$.

The assumed first-order optimality condition gives us $\sup\limits_{h \in D Y(\bar x,\bar y)(u)} {\rm d} f(\bar x,\bar y)(u,h) \geq 0$.
If $\sup\limits_{h \in D Y(\bar x,\bar y)(u)} {\rm d} f(\bar x,\bar y)(u,h) >0$,
then there exists $h \in D Y(\bar x,\bar y)(u)$ such that ${\rm d} f(\bar x,\bar y)(u,h) >0$ and there exists $h_k \to h$ such that $y_k:=\bar y+t_kh_k \in Y(\bar x+t_ku_k)$ and $\|h_k\| \leq \kappa \|u_k\|$ for any $k$ (by the semidifferentiability of $Y$). Then,
\begin{eqnarray*}\max _{y^{\prime}  \in  Y(x_k)\cap \mathbb{B}_{\delta}(\bar y)} f\left(x_k, y^{\prime}\right)-f(\bar x,\bar y) & \geq & f(x_k,y_k)-f(\bar x,\bar y)\\
&  \geq & t_k {\rm d} f(\bar x,\bar y)(u,h) +o(t_k) > o(t_k) \geq o(t_k^2),\end{eqnarray*}
which is a contradiction to \eqref{contradiction3}.

If $\sup\limits_{h \in D Y(\bar x,\bar y)(u)} {\rm d} f(\bar x,\bar y)(u,h) = 0$, since $V_{\delta}$ is Lipschitz continuous at $\bar x$ and twice directionally differentiable at $\bar x$ in direction $u \in T_X(\bar x) $, by definition of the second subderivative, there exist $t_j\downarrow 0$, $u_j\to u$ such that  $\bar x+t_ju_j \in X$ and
\begin{eqnarray*}\label{}
	\begin{aligned}
		& \quad\ {\rm d}^2 (V_{\delta}+\delta_X)(\bar x;0)(u)  = \liminf\limits_{j \to \infty} \frac{V_{\delta}\left(\bar{x}+t_ju_j\right)-V_{\delta}(\bar{x})}{\frac{1}{2} t_j^2}  \\
		&= \liminf\limits_{j \to \infty} \left\{ \frac{V_{\delta}\left(\bar{x}+t_ju_j\right)-V_{\delta}(\bar{x})-t_jV^{\prime}_{\delta}(\bar x;u_j)}{\frac{1}{2} t_j^2} + \frac{t_jV^{\prime}_{\delta}(\bar x;u_j)}{\frac{1}{2} t_j^2} \right\} \\
		&=V^{''}_{\delta}(\bar x;u) + \liminf_{j \to \infty} \frac{t_jV^{\prime}_{\delta}(\bar x;u_j)}{\frac{1}{2}{t_j}^2}    \\
		&\geq V^{''}_{\delta}(\bar x;u) + {\rm d}^2 \delta_X(\bar x;-v^u)(u) \\
		& >0 \quad \mbox{ by } \eqref{cor4.1suf}.\\
	\end{aligned}
\end{eqnarray*}
Then,
\begin{equation*}
	\begin{aligned}
		\max _{y^{\prime}  \in  Y(x_k)\cap \mathbb{B}_{\delta}(\bar y)} f\left(x_k, y^{\prime}\right)-f(\bar x,\bar y) & = V_{\delta}(x_k) - V_{\delta}(\bar x) \\
		&  \geq \frac{1}{2}t_k^2 \left( V^{''}_{\delta}(\bar x;u) + {\rm d}^2 \delta_X(\bar x;-v^u)(u) \right)+o(t_k^2) > o(t_k^2).
	\end{aligned}
\end{equation*}
Hence we have a contradiction to \eqref{contradiction3} and consequently (\ref{second-order-growth3}) holds.
\end{proof}

Sufficient conditions for twice directional differentiability of the value function were discussed in the literature; cf.\ \cite{Auslender90,Bondarevsky16,Van2017}. We will discuss the details in the next section.

\section{Set constrained Systems}\label{setconstrained}
In this section we consider the minimax problem:
\begin{equation}\label{set-constrained}
  \min_{x\in X}\max_{y\in  Y(x)} f(x,y),
  \end{equation}where  the constraints are defined by the following set-constrained systems: \begin{eqnarray*}
X:= \{ x\in \mathbb{R}^n | \phi(x)\in C\},\quad
Y(x):= \{ y\in \mathbb{R}^m | \varphi (x,y) \in D\},
\end{eqnarray*}
where  $\phi:\mathbb{R}^n \rightarrow \mathbb{R}^{p}$ and $\varphi:\mathbb{R}^n \times \mathbb{R}^m  \to \mathbb{R}^q$,  $C\subseteq \mathbb{R}^{p}$ and $D\subseteq \mathbb{R}^{q}$ are closed and convex.

Denote the Lagrangian function of the minimax problem \eqref{set-constrained} by
\begin{eqnarray*}
L(x,y,\alpha,\beta) := f(x,y) + \phi(x)^T\alpha - \varphi(x,y)^T\beta,
\end{eqnarray*}
and the Lagrangian function of the maximization problem $\max_{y\in Y(\bar x)} f(\bar x,y)$ by 
\begin{eqnarray*}
L_{\max}(x,y,\beta) := f(x,y) - \varphi(x,y)^T\beta.
\end{eqnarray*}

Define the set of multipliers for the maximization problem $\max_{y\in Y(\bar x)} f(\bar x,y)$ by
$$\Lambda_{\max}(\bar y;\bar x):=\{\beta \in N_D(\varphi(\bar x,\bar y)) |  \nabla_y f(\bar x,\bar y)-\nabla_y \varphi(\bar x,\bar y)^T\beta=0 \}.$$

Define the projection  of the linearization cone to ${\rm gph}  Y$ at $(\bar x,\bar y)$ to space $\mathbb{R}^m$ in direction $u$ as   $$\mathbb{L}(\bar x,\bar y;u) :=\{h'\in \mathbb{R}^m|\nabla \varphi(\bar x,\bar y)(u,h')\in T_D(\varphi(\bar x,\bar y))\}.$$
By \cite[Proposition 4.2]{ABM21}, when MSCQ holds for system $\varphi(\bar x,y)\in D$ at $\bar y$, we have
$$\mathbb{L}(\bar x,\bar y;u) = D Y(\bar x,\bar y)(u).$$

\begin{lemma}\label{lemma4.5.1}
Suppose that $f$ and $\varphi$ are smooth and $Y$ satisfies the Robinson stability (RS) property at $(\bar x,\bar y)$. 
Suppose that  the regularity condition
\begin{equation}\label{regular29}
0=\nabla_y \varphi(\bar{x},\bar y)^T \beta, \beta \in N_D(\varphi(\bar{x},\bar y)) \cap\{\nabla \varphi(\bar{x},\bar y)(u, h)\}^{\perp} \Longrightarrow \beta=0
\end{equation}
holds for any $u \in T_{ X}(\bar x), h \in \mathbb{L}(\bar x,\bar y;u)$. Then the strong duality 
\begin{equation}\label{strongd}
\max\limits_{h \in \mathbb{L}(\bar x,\bar y;u)} \nabla f(\bar x,\bar y)^T(u,h) = \min\limits_{\beta \in \Lambda_{\max}(\bar y; \bar x)} \nabla_x f(\bar x,\bar y)^Tu - (\nabla_x \varphi(\bar x,\bar y)^T\beta) ^Tu
\end{equation} holds and the maximum with respect to $h$ can be attained.
\end{lemma}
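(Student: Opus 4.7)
The plan is to read the left-hand side of \eqref{strongd} as the optimal value of a conic linear program in $h$, and then derive strong duality from a Fritz--John argument with \eqref{regular29} acting as a directional constraint qualification.

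Since $D$ is closed convex, $T_D(\varphi(\bar x,\bar y))$ is a closed convex cone and by Proposition~\ref{prop_polar} its polar equals $N_D(\varphi(\bar x,\bar y))$. Splitting $\nabla f(\bar x,\bar y)^T(u,h)=\nabla_x f(\bar x,\bar y)^T u+\nabla_y f(\bar x,\bar y)^T h$, the LHS reduces, up to the constant $\nabla_x f^T u$, to the conic linear program
\begin{equation*}
\sup_h \nabla_y f(\bar x,\bar y)^T h \qquad\text{s.t.}\qquad \nabla_y\varphi(\bar x,\bar y)\, h\in T_D(\varphi(\bar x,\bar y))-\nabla_x\varphi(\bar x,\bar y)\, u.
\end{equation*}
Weak duality is then a one-line computation: for any primal-feasible $h$ and any $\beta\in\Lambda_{\max}(\bar y;\bar x)$, the identity $\nabla_y\varphi^T\beta=\nabla_y f$ combined with $\beta\in N_D$ and $\nabla\varphi(u,h)\in T_D$ yields
\begin{equation*}
\nabla_y f^T h=\beta^T\nabla\varphi(u,h)-(\nabla_x\varphi^T\beta)^T u\leq -(\nabla_x\varphi^T\beta)^T u,
\end{equation*}
so $\mathrm{LHS}\leq\mathrm{RHS}$.

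For the reverse inequality and the claimed attainment, I would first secure a primal maximizer $\bar h\in\mathbb{L}(\bar x,\bar y;u)$ by LP theory (a feasible conic LP with finite optimum is attained), and then apply a Fritz--John necessary condition at $\bar h$ to produce $(\lambda,\beta)\ne 0$ with $\lambda\ge 0$, $\beta\in N_D(\varphi(\bar x,\bar y))\cap\{\nabla\varphi(\bar x,\bar y)(u,\bar h)\}^\perp$, and $\lambda\nabla_y f=\nabla_y\varphi^T\beta$. Condition \eqref{regular29} applied at $(u,\bar h)$ rules out $\lambda=0$ (otherwise $\beta=0$, contradicting nontriviality), so normalizing $\lambda=1$ gives $\bar\beta\in\Lambda_{\max}(\bar y;\bar x)$. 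The complementarity $\bar\beta^T\nabla\varphi(u,\bar h)=0$ then yields $\nabla f^T(u,\bar h)=\nabla_x f^T u-(\nabla_x\varphi^T\bar\beta)^T u$, matching the dual value at $\bar\beta$; combined with weak duality this gives equality in \eqref{strongd} and attainment on both sides. The Robinson stability hypothesis enters only through Proposition~\ref{prop2.2} to identify $\mathbb{L}(\bar x,\bar y;u)$ with the graphical derivative $DY(\bar x,\bar y)(u)$, so the LP above is the intrinsic first-order linearization of the inner problem in direction $u$.

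The principal obstacle will be ensuring that the primal supremum is finite, since the feasible set is typically an unbounded polyhedral-type cone. My plan is a recession-direction argument: if the sup were infinite, a normalized ascending sequence would produce a limit direction $d$ with $\nabla_y\varphi\, d\in T_D(\varphi(\bar x,\bar y))$ and $\nabla_y f^T d>0$; evaluating \eqref{regular29} along the feasible ray $\bar h+\lambda d$ and exploiting the cone structure of $N_D\cap\{\cdot\}^\perp$ together with homogeneity of $T_D$ should propagate the constraint qualification in the recession direction and contradict dual infeasibility, thereby pinning the primal value to a finite (and attained) quantity so that the Fritz--John step applies.
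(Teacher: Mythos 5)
Your weak-duality computation and the Fritz--John/KKT step at a primal maximizer are correct, and together they reproduce the substance of what the paper outsources to \cite[Theorem 2.1]{BaiYe2023}: the paper's proof consists of showing that \eqref{regular29} is exactly Robinson's CQ for the linearized conic system $\nabla\varphi(\bar x,\bar y)(u,h)\in T_D(\varphi(\bar x,\bar y))$ (via $N_{\mathcal K}(d)=\mathcal K^{\circ}\cap\{d\}^{\perp}$ and tangent--normal polarity) and then citing that theorem. So the dual half of your argument is fine. The gaps are in the primal half, i.e.\ finiteness and attainment.

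First, the assertion that ``a feasible conic LP with finite optimum is attained'' is false unless $T_D(\varphi(\bar x,\bar y))$ is polyhedral; here $D$ is only a closed convex set, so the tangent cone can be, e.g., a second-order cone, for which finite conic-LP values are notoriously not always attained. Second, and more seriously, the recession argument you propose for finiteness cannot work: \eqref{regular29} is a condition on dual elements $\beta\in N_D(\varphi(\bar x,\bar y))\cap\{\nabla\varphi(\bar x,\bar y)(u,h)\}^{\perp}$ and says nothing about primal recession directions $d$ with $\nabla_y\varphi(\bar x,\bar y)d\in T_D(\varphi(\bar x,\bar y))$ and $\nabla_yf(\bar x,\bar y)^Td>0$. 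Indeed \eqref{regular29} is vacuously satisfied when $N_D(\varphi(\bar x,\bar y))=\{0\}$, yet the primal supremum is then $+\infty$ whenever $\nabla_yf(\bar x,\bar y)\neq0$ (and $\Lambda_{\max}(\bar y;\bar x)=\emptyset$, so there is no dual feasibility to contradict). Both finiteness and attainment must instead be extracted from the Robinson stability hypothesis, which you use only to identify $\mathbb{L}(\bar x,\bar y;u)$ with $DY(\bar x,\bar y)(u)$. The paper uses RS for more: RS makes $Y$ Lipschitz-like, hence calm, so by Proposition \ref{prop2.1}(iii) every $h\in DY(\bar x,\bar y)(u)$ satisfies $\|h\|\le\kappa\|u\|$; equivalently, the maximum in \eqref{strongd} is the directional derivative $V_\tau'(\bar x;u)$ of the localized value function from Lemma \ref{prop4.1}, whose maximizing sequences $h_k=(y_k-\bar y)/t_k$ are bounded by construction and therefore converge to an attaining $h$. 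Replacing your recession step with this compactness of $\mathbb{L}(\bar x,\bar y;u)=DY(\bar x,\bar y)(u)$ under RS repairs the proof; the rest of your argument then goes through.
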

\begin{proof}
Given any $u\in T_X(\bar x)$. We now show that the regularity condition \eqref{regular29} is equivalent to Robinson's CQ for the conic linear system $\nabla \varphi(\bar{x}, \bar y)( u, h) \in T_D(\varphi(\bar x,\bar y))$ holds at each $ h \in \mathbb{L}(\bar x,\bar y;u)$, i.e.,
$$
0=\nabla_y \varphi(\bar{x}, \bar y)^T \beta, \beta \in N_{T_D(\varphi(\bar{x}, \bar y))}(\nabla \varphi(\bar{x}, \bar y)( u, h)) \Longrightarrow \beta=0 .
$$
Recall that for any closed convex cone $\mathcal{K}$ and any $d \in \mathcal{K}, N_{\mathcal{K}}(d)=\mathcal{K}^o \cap\{d\}^{\perp}$; see e.g. \cite[Corollary 23.5.4]{Ro1970}. Since $D$ is convex, the tangent cone $T_D(\varphi(\bar{x}, \bar y))$ is closed and convex.
$$
\begin{aligned}
N_{T_D(\varphi(\bar{x}, \bar y))}(\nabla \varphi(\bar{x}, \bar y)( u, h)) & =\left\{T_D(\varphi(\bar{x}, \bar y))\right\}^{\circ} \cap\{\nabla \varphi(\bar{x}, \bar y)(u, h)\}^{\perp} \\
& =N_D(\varphi(\bar{x}, \bar y)) \cap\{\nabla \varphi(\bar{x}, \bar y)(  u, h)\}^{\perp},
\end{aligned}
$$
where the second equality follows from tangent-normal polarity in Proposition \ref{prop_polar}. 

Then, by \cite[Theorem 2.1]{BaiYe2023}, the strong duality \eqref{strongd} holds 
and the maximum with respect to $h$ can be attained (since the maximum is the directional derivative of the value function, see Lemma \ref{prop4.1} and Proposition \ref{prop2.2}).
\end{proof}

Note that the regularity condition \eqref{regular29} is equivalent to the first-order sufficient condition for metric subregularity (FOSCMS) in the direction $\varphi(\bar{x},\bar y)(u,h)$ when $D$ is convex \cite[Remark 4.1]{BaiYe2023}. Moreover, for inequalities and equalities systems, the FOSCMS is implied by the MFCQ.

Now, we can give first-order optimality conditions for the minimax problem  (\ref{set-constrained}).
  \begin{theorem}[First-order necessary conditions for set-constrained systems]\label{set-first}Let $(\bar x, \bar y)$ be a calm local minimax point to the minimax problem \eqref{set-constrained} where $f,\phi, \varphi$ are smooth. Suppose that the MSCQ holds for the system $\phi(x)\in C$ at $\bar x$, $\varphi(\bar x,y)\in D$ at $\bar y$, respectively. Suppose that $Y$ satisfies the Robinson stability (RS) property at $(\bar x,\bar y)$.  Suppose that the regularity condition \eqref{regular29} holds for any $u \in T_{ X}(\bar x), h \in \mathbb{L}(\bar x,\bar y;u)$. Then there exist $\alpha \in N_C(\phi(\bar x)), \beta \in N_D(\varphi(\bar x,\bar y))$,  such that
\begin{equation*}
\nabla_{(x,y)} L(\bar x,\bar y,\alpha,\beta) =0.
\end{equation*}
   \end{theorem}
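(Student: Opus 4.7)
The plan is to invoke Theorem~\ref{1st-non1} to obtain the primal first-order necessary conditions, then convert them into dual multiplier form using the strong duality of Lemma~\ref{lemma4.5.1} and the standard normal-cone upper bound available under MSCQ.

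First I would verify the hypotheses of Theorem~\ref{1st-non1}(a). Smoothness of $f$ gives semidifferentiability at $(\bar x,\bar y)$ with ${\rm d}f(\bar x,\bar y)(u,h)=\nabla f(\bar x,\bar y)^T(u,h)$. The RS property on $Y$ at $(\bar x,\bar y)$, together with convexity (hence geometric derivability) of $D$, yields via Proposition~\ref{prop2.2} that $Y$ is semidifferentiable at $\bar x$ for $\bar y$ and Lipschitz-like (hence calm) around $(\bar x,\bar y)$; the proof of that proposition also supplies $D Y(\bar x,\bar y)(u)=\mathbb{L}(\bar x,\bar y;u)$. Theorem~\ref{1st-non1}(a) then delivers the two primal necessary conditions
\begin{equation*}
\sup_{h\in\mathbb{L}(\bar x,\bar y;u)}\nabla f(\bar x,\bar y)^T(u,h)\ge 0\ \ \forall u\in T_X(\bar x),\qquad \nabla_y f(\bar x,\bar y)^T h\le 0\ \ \forall h\in T_{Y(\bar x)}(\bar y).
\end{equation*}

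Next I would extract $\beta$ from the inner condition. The second primal inequality says $\nabla_y f(\bar x,\bar y)\in T_{Y(\bar x)}(\bar y)^{\circ}=\widehat N_{Y(\bar x)}(\bar y)$. The MSCQ for $\varphi(\bar x,\cdot)\in D$ at $\bar y$, combined with convexity of $D$, yields the standard normal-cone chain rule $\widehat N_{Y(\bar x)}(\bar y)\subseteq N_{Y(\bar x)}(\bar y)\subseteq\nabla_y\varphi(\bar x,\bar y)^T N_D(\varphi(\bar x,\bar y))$, so there exists $\beta\in N_D(\varphi(\bar x,\bar y))$ with $\nabla_y f(\bar x,\bar y)=\nabla_y\varphi(\bar x,\bar y)^T\beta$, i.e.\ $\beta\in\Lambda_{\max}(\bar y;\bar x)$ and $\nabla_y L(\bar x,\bar y,\alpha,\beta)=0$ for every $\alpha$. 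Then I would extract $\alpha$ via strong duality. Under the regularity condition \eqref{regular29}, Lemma~\ref{lemma4.5.1} recasts the first primal inequality as
\begin{equation*}
\min_{\beta'\in\Lambda_{\max}(\bar y;\bar x)}\bigl(\nabla_x f(\bar x,\bar y)-\nabla_x\varphi(\bar x,\bar y)^T\beta'\bigr)^T u\ \ge\ 0 \quad\forall u\in T_X(\bar x).
\end{equation*}
The crucial observation is that because this \emph{minimum} is nonnegative, \emph{every} feasible $\beta'$ satisfies the same linear inequality on $T_X(\bar x)$; in particular the $\beta$ already fixed above gives $-(\nabla_x f(\bar x,\bar y)-\nabla_x\varphi(\bar x,\bar y)^T\beta)\in T_X(\bar x)^{\circ}=\widehat N_X(\bar x)$. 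Invoking the MSCQ for $\phi(x)\in C$ at $\bar x$ and the same normal-cone upper bound then produces $\alpha\in N_C(\phi(\bar x))$ with $-(\nabla_x f-\nabla_x\varphi^T\beta)=\nabla\phi(\bar x)^T\alpha$, which is exactly $\nabla_x L(\bar x,\bar y,\alpha,\beta)=0$.

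The principal obstacle I anticipate is not any single calculation but the compatibility of the two multiplier extractions: a priori the $\beta$ coming from the $y$-condition and the $\beta'$ arising in the dual of the $x$-condition could differ. Strong duality in Lemma~\ref{lemma4.5.1} is what resolves this, converting the existential "sup over $h$" in the primal $x$-condition into a universal "min over $\beta'$"; once that is in place, any single $\beta\in\Lambda_{\max}(\bar y;\bar x)$ simultaneously annihilates $\nabla_y L$ and, via MSCQ on $X$, determines a compatible $\alpha$ with $\nabla_x L=0$.
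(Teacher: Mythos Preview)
Your proposal is correct and follows essentially the same route as the paper: invoke Theorem~\ref{1st-non1}(a) for the primal conditions, use the polarity relation and the MSCQ normal-cone formula to obtain $\beta\in\Lambda_{\max}(\bar y;\bar x)$ from the $y$-condition, apply the strong duality of Lemma~\ref{lemma4.5.1} to rewrite the $x$-condition as a minimum over $\Lambda_{\max}$, and then note that nonnegativity of the minimum means any fixed $\beta\in\Lambda_{\max}$ yields $-(\nabla_x f-\nabla_x\varphi^T\beta)\in\widehat N_X(\bar x)$, from which MSCQ on $X$ produces $\alpha$. Your explicit discussion of the compatibility of the two $\beta$'s is exactly the point the paper handles by writing ``for any $\beta\in\Lambda_{\max}(\bar y;\bar x)$'' after the strong duality step.
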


\begin{proof}
 Using the MSCQ, the RS, and the convexity of $C$ and $D$, we get from \cite[Proposition 4.2]{ABM21} that
$$\widehat N_X(\bar x) = N_X(\bar x) = \nabla \phi(\bar x)^T N_C(\phi(\bar x)),$$
$$\widehat N_{Y(\bar x)}(\bar y) = N_{Y(\bar x)}(\bar y) = \nabla_y \varphi(\bar x,\bar y)^T N_D(\varphi(\bar x,\bar y)),$$
\begin{equation}\label{normaly}\widehat{N}_{\mathrm{gph} Y}(\bar{x}, \bar{y}) = N_{\mathrm{gph} Y}(\bar{x}, \bar{y}) = \nabla \varphi(\bar x,\bar y)^T N_D(\varphi(\bar x,\bar y)).\end{equation}

By Theorem \ref{1st-non1} (a), 
\begin{eqnarray*}
&& \sup_{h \in D Y(\bar x,\bar y)(u)} \nabla f(\bar{x}, \bar{y})^T(u,h) \geq 0 \quad \forall\, u \in T_X(\bar{x}),
\end{eqnarray*}
\begin{equation*}
\nabla_y f(\bar{x}, \bar{y})^Th \leq 0\quad \forall\, h \in T_{Y(\bar x)}(\bar{y}).
\end{equation*}
By \eqref{normalcone-equ}, $0 \in -\nabla_y f(\bar{x}, \bar{y}) + \widehat N_{Y(\bar x)}(\bar y)$. Thus, the multiplier set $\Lambda_{\max}(\bar y;\bar x)$ is nonempty.

By the strong duality \eqref{strongd}, we have 
$$\min\limits_{\beta \in \Lambda_{\max}(\bar y; \bar x)} \nabla_x f(x,y)^Tu - (\nabla_x \varphi(x,y)^T\beta) ^Tu \geq 0 \quad \forall\, u \in T_X(\bar{x}).$$
Thus, by \eqref{normalcone-equ}, for any $\beta \in \Lambda_{\max}(\bar y;\bar x)$, we have
$0 \in \nabla_x f(x,y) - \nabla_x \varphi(x,y)^T\beta + \widehat N_{X}(\bar x)$.
Then, we have the desired results.
\end{proof}

Next, we consider the special case where the constraint sets $X$ and $Y(x)$ involving only equalities and inequalities. Let $C=\mathbb{R}^{p_1}_- \times \{0\}^{p_2}$ and $D=\mathbb{R}^{q_1}_- \times \{0\}^{q_2}$. In this case,
the set
$$\mathbb{L}(\bar x,\bar y;u) :=\{h'|\nabla \varphi_i(\bar x,\bar y)^T (u,h') \leq 0, i \in I_{\varphi}(\bar x,\bar y), \nabla \varphi_j(\bar x,\bar y)^T (u,h') = 0, j=1,...,q_2\},$$
where  $I_\varphi(\bar x,\bar y):=\{i=1,...,q_1\mid\varphi_i(\bar x,\bar y)=0\}$.
If MSCQ holds for $\varphi(\bar x,y)\in D$ at $\bar y$, then the critical cone for the maximization problem $\max_{y\in Y(\bar x)} f(\bar x,y)$ becomes
 $$C_{\max}(\bar y;\bar x) = \mathbb{L}(\bar x,\bar y;u) 
	\cap \{ h' \in \mathbb{R}^m \mid \nabla_y f(\bar x,\bar y)^Th'\geq  0 \}.$$
	And when $\bar y$ is an optimal solution, 
	$$C_{\max}(\bar y;\bar x) = \mathbb{L}(\bar x,\bar y;u) 
	\cap \{ h' \in \mathbb{R}^m \mid \nabla_y f(\bar x,\bar y)^T h'=  0 \}.$$
If MSCQ holds for $\phi(x)\in C$ at $\bar x$, then the critical cone for the minimization problem $\min_{x\in X} V_{\tau}(x)$ becomes
\begin{eqnarray*}
	C_{\min}(\bar x,\bar y)
	&=& \mathbb{L}_X(\bar x)\cap \{ u|\sup\limits_{h \in \mathbb{L}(\bar x,\bar y;u)} \nabla f(\bar x,\bar y)^T(u,h) \leq  0\},
\end{eqnarray*} where 
$$\mathbb{L}_X(\bar x)=\left\{ u \in \mathbb{R}^n \;\middle |\;\begin{array}{l}
		\nabla \phi_i(\bar x)^T u \leq 0, \; i \in I_\phi(\bar x),\\
		\nabla \phi_j(\bar x)^T u = 0, \; j=1,\dots,p_2,\end{array}  \right \},$$
		where the index set $I_\phi(\bar x):=\{i=1,...,p_1 \mid \phi_i(\bar x)=0\}$, and when $\bar x$ is an optimal solution,
$$ C_{\min}(\bar x,\bar y)
	= \mathbb{L}_X(\bar x)\cap \{ u|\sup\limits_{h \in \mathbb{L}(\bar x,\bar y;u)} \nabla f(\bar x,\bar y)^T(u,h) =  0\}.$$

If MSCQ holds for $\varphi(\bar x,y)\in D$ at $\bar y$, then the set $C(\bar x,\bar y;u)$ becomes
$$
	C(\bar x,\bar y;u)= \mathbb{L}(\bar x,\bar y;u) 
	\cap \{ h' \in \mathbb{R}^m \mid \nabla f(\bar x,\bar y)^T (u,h') = 0 \}.
$$

The set of multipliers for the minimax problem \eqref{set-constrained} becomes
\begin{eqnarray*}
 \Lambda(\bar x,\bar y) &=&  \{ (\alpha,\beta):=((\alpha_1,\alpha_2),(\beta_1,\beta_2)) \in (\mathbb{R}^{p_1}_+ \times \mathbb{R}^{p_2}) \times (\mathbb{R}^{q_1}_+ \times \mathbb{R}^{q_2}) \mid   \\
& & \quad \nabla f(\bar x,\bar y)+(\nabla \phi(\bar x)^T\alpha, 0)- \nabla \varphi(\bar x,\bar y)^T\beta=0, \alpha_1 \perp \phi_{\leq}(\bar x) ,\beta_1 \perp \varphi_{\leq}(\bar x,\bar y)  \}.
\end{eqnarray*}
Moreover, the set of multipliers
\begin{eqnarray*}
\Lambda_{\max}(\bar y; \bar x)&=& \left \{ \beta:=(\beta_1,\beta_2) \in \mathbb{R}^{q_1}_+ \times \mathbb{R}^{q_2} \mid \nabla_y f(\bar x,\bar y)- \nabla_y \varphi(\bar x,\bar y)^T\beta=0,  \beta_1 \perp \varphi_{\leq}(\bar x,\bar y) \right \},
\end{eqnarray*}
where $\phi_{\leq}(\bar x):=(\phi_1(\bar x), ... ,\phi_{p_1}(\bar x))^T$ and $\varphi_{\leq}(\bar x,\bar y):=(\varphi_1(\bar x,\bar y), ... ,\varphi_{q_1}(\bar x,\bar y))^T$.

	When reducing to inequalities systems, by Proposition \ref{prop2.3}, Theorem \ref{set-first} becomes the following one.

\begin{corollary}[inequalities and equalities systems (first-order)]\label{coro5.1}
Let $(\bar x, \bar y)$ be a calm local minimax point to the minimax problem \eqref{set-constrained} with  $C=\mathbb{R}^{p_1}_- \times \{0\}^{p_2}$ and $D=\mathbb{R}^{q_1}_- \times \{0\}^{q_2}$ where $f,\phi, \varphi$ are smooth. Suppose that  the MSCQ holds for the system $\phi(x)\in C$ at $\bar x$ (e.g., $\phi$ is affine or MFCQ holds at $\bar x$) and one of the following assumptions hold:  (i) MFCQ holds at $(\bar x,\bar y)$;  (ii) RCRCQ holds  at $(\bar x,\bar y)$; (iii) $\varphi(x, y)=a(x) + By + c$, where $a: \mathbb{R}^n \to \mathbb{R}^p$ is continuous, $B \in \mathbb{R}^{p \times m}$ and $c \in \mathbb{R}^p$ and $Y(x)$ is nonempty near $\bar x$. Then there exist 
$\alpha:=(\alpha_1,\alpha_2) \in \mathbb{R}^{p_1}_+ \times \mathbb{R}^{p_2},\beta:=(\beta_1,\beta_2) \in \mathbb{R}^{q_1}_+ \times \mathbb{R}^{q_2}$,  such that $\alpha_1 \perp \phi_{\leq}(\bar x), \beta_1 \perp \varphi_{\leq}(\bar x,\bar y)$ and 
\begin{equation*}
\nabla_{(x,y)} L(\bar x,\bar y,\alpha,\beta) =0.
\end{equation*} 
\end{corollary}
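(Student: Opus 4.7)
The plan is to derive this corollary as a specialization of Theorem~\ref{set-first} to the polyhedral cones $C = \mathbb{R}^{p_1}_- \times \{0\}^{p_2}$ and $D = \mathbb{R}^{q_1}_- \times \{0\}^{q_2}$. I would verify each of the four hypotheses of Theorem~\ref{set-first} under the stated assumptions, then read off the KKT form of the conclusion from the explicit structure of the normal cones to $C$ and $D$.

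First, by Proposition~\ref{prop2.3}, each of the three alternatives (i), (ii), (iii) implies that $Y$ satisfies the Robinson stability property at $(\bar x,\bar y)$, giving the third hypothesis of Theorem~\ref{set-first}. Proposition~\ref{prop2.2} then delivers the Lipschitz-like property (and hence calmness) and semidifferentiability of $Y$ at $\bar x$ for $\bar y$. Specializing the RS inequality to $x = \bar x$ yields MSCQ for $\varphi(\bar x,y)\in D$ at $\bar y$, which together with the assumed MSCQ for $\phi(x)\in C$ at $\bar x$ supplies both MSCQ hypotheses.

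Second, I would verify the regularity condition \eqref{regular29}. Under (i) MFCQ at $(\bar x,\bar y)$, the implication is immediate since any $\beta \in N_D(\varphi(\bar x,\bar y))$ with $\nabla_y \varphi(\bar x,\bar y)^T\beta = 0$ is already forced to be zero by the standard MFCQ argument (so the extra intersection with $\{\nabla\varphi(\bar x,\bar y)(u,h)\}^{\perp}$ is irrelevant). Under (iii) the affine case, the conic linear system defining $\mathbb{L}(\bar x,\bar y;u)$ is polyhedral and $\varphi(x,y)\in D$ is a polyhedral inclusion, so Robinson's CQ, which by Lemma~\ref{lemma4.5.1} is equivalent to \eqref{regular29}, holds automatically via polyhedral metric regularity. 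Under (ii) RCRCQ, I would invoke the directional constant-rank structure: since the rank of $\{\nabla_y\varphi_i: i\in I_0 \cup K\}$ is locally constant for every $K \subseteq I(\bar x,\bar y)$, the kernel description of $\nabla_y \varphi(\bar x,\bar y)^T$ restricted to the active-index selections behaves rigidly along $(u,h)$ with $h \in \mathbb{L}(\bar x,\bar y;u)$, and this pins the multiplier set down to zero in the implication of \eqref{regular29}.

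Once these hypotheses are verified, Theorem~\ref{set-first} produces $\alpha \in N_C(\phi(\bar x))$ and $\beta \in N_D(\varphi(\bar x,\bar y))$ satisfying $\nabla_{(x,y)} L(\bar x,\bar y,\alpha,\beta) = 0$. Writing out the normal-cone formulas for the polyhedral cones gives $\alpha=(\alpha_1,\alpha_2) \in \mathbb{R}^{p_1}_+ \times \mathbb{R}^{p_2}$ with $\alpha_1 \perp \phi_{\leq}(\bar x)$ and $\beta=(\beta_1,\beta_2) \in \mathbb{R}^{q_1}_+ \times \mathbb{R}^{q_2}$ with $\beta_1 \perp \varphi_{\leq}(\bar x,\bar y)$, which is exactly the stated conclusion. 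The main obstacle is the RCRCQ case of step two: MFCQ and affinity make \eqref{regular29} essentially automatic, but RCRCQ requires a careful directional argument exploiting locally constant rank along critical directions $(u,h)$, and this is where the proof will demand the most care.
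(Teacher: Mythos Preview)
Your overall plan---specialize Theorem~\ref{set-first} via Proposition~\ref{prop2.3}---matches the paper, and your MFCQ verification of~\eqref{regular29} is fine. But there is a real gap in cases~(ii) and~(iii): condition~\eqref{regular29} can simply \emph{fail} under RCRCQ or affinity. For a concrete counterexample, take two identical equality constraints $\varphi_1(x,y)=\varphi_2(x,y)=y_1$: RCRCQ holds (rank constantly $1$), yet $\beta=(1,-1)$ lies in $N_D(\varphi(\bar x,\bar y))$, satisfies $\nabla_y\varphi(\bar x,\bar y)^T\beta=0$, and is orthogonal to any $\nabla\varphi(\bar x,\bar y)(u,h)$. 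So your ``constant rank pins the multiplier to zero'' heuristic does not survive. Likewise, your affine argument conflates metric \emph{sub}regularity with metric regularity: polyhedrality delivers Hoffman's error bound (MSCQ), not Robinson's CQ, so \eqref{regular29} is not automatic there either.

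The correct fix is not to rescue~\eqref{regular29} but to bypass it. That condition enters the proof of Theorem~\ref{set-first} only through Lemma~\ref{lemma4.5.1}, where it secures the strong duality~\eqref{strongd}. When $D=\mathbb{R}^{q_1}_-\times\{0\}^{q_2}$, the left side of~\eqref{strongd} is a linear program in $h$ over the polyhedron $\mathbb{L}(\bar x,\bar y;u)$, and its LP dual is exactly the right side; since the primal is feasible ($\mathbb{L}\neq\emptyset$ by semidifferentiability of $Y$) and the dual is feasible once $\Lambda_{\max}(\bar y;\bar x)\neq\emptyset$ (which follows from~\eqref{y-non1}), LP strong duality yields~\eqref{strongd} directly, with no need for~\eqref{regular29}. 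This is precisely the route the paper takes in step~(ii) of the proof of Theorem~\ref{second-dual}. With~\eqref{strongd} in hand, the rest of the argument for Theorem~\ref{set-first} goes through verbatim, and your normal-cone unpacking at the end is correct.
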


{ Compared to \cite[Theorems 3.1 and 3.3]{DaiZh-2020}, we have derived the same optimality condition under much weaker constraint qualification. } In particular, we do not need to assume the Jacobian uniqueness condition.

Now, to give second-order sufficient optimality conditions for inequalities and equalities systems, by  Corollary \ref{sufficient2}, we need to study the second-order directional differentiability of the localized value function $V_\delta$.

We say that the strong second-order sufficient condition in direction $u$ (SSOSC$_u$) holds at $(\bar x,\bar y)$ if $\Lambda_{\max}(\bar x,\bar y) \neq \emptyset$ and
$$\inf_{\beta \in \Lambda^2_{\operatorname{max}}(\bar x,\bar y,u)} h^T \nabla^2_{yy} L_{\max}(\bar y,\beta;\bar x)h <0 \quad \forall h \in C_{\max}(\bar y;\bar x) \setminus \{0\},$$
where 
$$\Lambda^2_{\operatorname{\max}}(\bar x,\bar y,u):=\argmin_{\beta \in \Lambda_{\max} (\bar y;\bar x)} \left \{ \nabla_x f(\bar x,\bar y)^T u-(\nabla_x \varphi (\bar x,\bar y)^T \beta)^T u\right \}$$ is the solution set of the right-hand side of \eqref{strongd} and
$C_{\max}(\bar y;\bar x)$ is the critical cone for the maximization problem $\max_{y\in Y(\bar x)} f(\bar x,y)$.
When SSOSC$_u$  holds at $(\bar x, \bar y)$, $\bar y$ is a strict local maximizer and hence for $\delta>0$ small enough, the solution set is a singleton $S_\delta(\bar x)=\{\bar y\}$.

\begin{proposition}\label{V2diff1}
          {  
    Assume that  $f,\phi, \varphi$ are twice continuously differentiable. Suppose that  MFCQ  holds for the system $\varphi( \bar x,y)\in D$ at  $ \bar y$ and  SSOSC$_u$ holds at $(\bar x,\bar y)$ for any $$u \in \mathbb{L}_X(\bar x) \cap \{u|\max_{h'\in \mathbb{L}(\bar x,\bar y;u)}{\rm d}f(\bar x,\bar y)(u,h')=0\} \setminus \{0\}.$$  Let $\delta>0$ be small enough such that $\bar y$ is the only maximizer in $S_\delta(\bar x)$. 
	 Then, 	$V_{\delta}$ is Lipschitz continuous and 
	 twice directional differentiable} at $\bar x$ and  for   any $u \in C_{\min}(\bar x,\bar y) \setminus \{0\}$,
	$$V^{''}_{\delta}(\bar x;u) \geq \max_{h \in \mathbb{L}^2(\bar x,\bar y;u)}\min_{\beta \in \Lambda^2_{\operatorname{max}}(\bar x,\bar y,u)} \nabla^2_{(x,y)} L_{\operatorname{max}}(\bar x,\bar y,\beta)((u,h),(u,h)).$$
\end{proposition}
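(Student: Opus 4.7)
The plan is to combine Lipschitz-like stability of $Y$ with classical second-order sensitivity analysis of parametric nonlinear programs (cf.\ \cite{Auslender90,Bondarevsky16,Van2017}), after first reducing the localized value function $V_\delta$ to the unconstrained-in-$\mathbb{B}_\delta(\bar y)$ picture.

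First I would establish Lipschitz continuity and remove the localization. MFCQ at $(\bar x,\bar y)$ yields, via Proposition \ref{prop2.3}(i) and Proposition \ref{prop2.2}, that $Y$ has the Robinson stability and is Lipschitz-like around $(\bar x,\bar y)$. Combined with the uniqueness of $\bar y$ as the sole maximizer in $S_\delta(\bar x)$ (guaranteed by SSOSC$_u$ together with smallness of $\delta$), a standard upper-semicontinuity argument for the solution map implies that every $y(x)\in S_\delta(x)$ lies in the interior of $\mathbb{B}_\delta(\bar y)$ for $x$ sufficiently close to $\bar x$. Consequently the localization is inactive and $V_\delta(x)=\max_{y\in Y(x)}f(x,y)$ near $\bar x$; local Lipschitz continuity of $V_\delta$ then follows from the Lipschitz-like property of $Y$ and the $C^2$ regularity of $f$ via the standard marginal-function argument.

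Next I would invoke the Bonnans--Shapiro-type second-order sensitivity theorem applied to the parametric problem $\max_y f(x,y)$ s.t.\ $\varphi(x,y)\in D$ along the ray $x=\bar x+tu$. Under MFCQ (providing boundedness of $\Lambda_{\max}(\bar y;\bar x)$ and Lipschitz stability of feasibility) and SSOSC$_u$ (providing uniqueness of $\bar y$ for perturbed problems and the quadratic growth required for matching upper and lower second-order bounds), this theorem produces twice directional differentiability of the value function with the min--max formula over $(h,\beta)\in \mathbb{L}^2(\bar x,\bar y;u)\times \Lambda^2_{\max}(\bar x,\bar y,u)$, which in particular yields the asserted lower bound.

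To justify the lower bound concretely, I would argue as follows. Fix any $h\in \mathbb{L}^2(\bar x,\bar y;u)$. Using the second-order tangent set to $D$ at $\varphi(\bar x,\bar y)$ in direction $\nabla\varphi(\bar x,\bar y)(u,h)$, which is nonempty under MFCQ, construct a feasible arc $y_t=\bar y+th+\tfrac{1}{2}t^2 z+o(t^2)\in Y(\bar x+tu)$. Then $V_\delta(\bar x+tu)\geq f(\bar x+tu,y_t)$, and a second-order Taylor expansion of $f$, combined with the Lagrangian identity $\nabla_y f(\bar x,\bar y)=\nabla_y\varphi(\bar x,\bar y)^T\beta$ available for every $\beta\in\Lambda_{\max}(\bar y;\bar x)$, cancels the linear contribution of $z$ and gives
$$f(\bar x+tu,y_t)=f(\bar x,\bar y)+t\nabla f(\bar x,\bar y)^T(u,h)+\tfrac{1}{2}t^2\nabla^2_{(x,y)}L_{\max}(\bar x,\bar y,\beta)((u,h),(u,h))+o(t^2).$$
Strong duality (Lemma \ref{lemma4.5.1}) identifies $\nabla f(\bar x,\bar y)^T(u,h)=V'_\delta(\bar x;u)$ for every $\beta\in\Lambda^2_{\max}(\bar x,\bar y,u)$; dividing by $\tfrac{1}{2}t^2$ and passing to the liminf produces the lower bound for each such $(h,\beta)$, and taking the infimum over $\beta\in\Lambda^2_{\max}(\bar x,\bar y,u)$ followed by the supremum over $h\in\mathbb{L}^2(\bar x,\bar y;u)$ yields the claim. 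The main obstacle is the rigorous construction of the second-order feasible arc $y_t$---equivalently, a lift of the second-order inner tangent to $D$ into $\operatorname{gph} Y$---which is the technical heart of the classical second-order sensitivity analysis and relies crucially on the MFCQ and $C^2$ smoothness of $\varphi$.
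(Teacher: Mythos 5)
Your overall route is the same as the paper's: view $V_\delta$ as the optimal value of the parametric program $\max_y f(x,y)$ s.t.\ $\varphi(x,y)\in D$ with the ball localization, get Lipschitz continuity from MFCQ plus uniqueness of the maximizer, and then invoke classical second-order sensitivity theory (the same circle of results \cite{Auslender90,Bondarevsky16,Van2017}; the paper cites \cite[Theorem 4.2]{Bondarevsky16}) for twice directional differentiability and the lower bound, whose underlying mechanism is exactly the second-order feasible-arc argument you sketch (and for the polyhedral $D$ relevant here the support term of the second-order tangent set can be taken to vanish, so your cancellation goes through).

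There is, however, one genuine gap in your first step: from ``every $y(x)\in S_\delta(x)$ lies in the interior of $\mathbb{B}_\delta(\bar y)$'' you conclude $V_\delta(x)=\max_{y\in Y(x)}f(x,y)$ near $\bar x$. That implication is false in general: interiority of the \emph{localized} maximizers does not rule out points of $Y(x)$ outside the ball where $f(x,\cdot)$ is larger, and $Y(x)$ need not be bounded, so the unlocalized value function may be strictly larger (or even $+\infty$) and need not be Lipschitz. Since your Lipschitz-continuity argument leans on this identity (and Lipschitz-likeness of $Y$ alone does not yield Lipschitz continuity of a marginal function without some compactness of the feasible set), this step as written does not go through. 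The paper's fix is to \emph{keep} the localization inside the constraint system, rewriting $\|y-\bar y\|\le\delta$ as the smooth constraint $\|y-\bar y\|^2\le\delta^2$, observing that it is inactive at $\bar y$ so MFCQ for the augmented system is equivalent to MFCQ for $\varphi(\bar x,y)\in D$ at $\bar y$, and then applying \cite[Theorem 5.1]{Gauvin1982} (which uses the uniform boundedness of the localized feasible region together with MFCQ at the unique maximizer) to get Lipschitz continuity of $V_\delta$, before applying \cite[Theorem 4.2]{Bondarevsky16}. With that replacement for your localization-removal step, the rest of your plan coincides with the paper's proof.
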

\begin{proof}  By definition, given $\bar x$, $V_\delta(\bar x)$ is the maximum value of the following parametric program:
$$\max_y f(\bar x,y) \qquad s.t.\ \  \varphi(\bar x,y)\in D, \|y-\bar y\|\leq \delta.$$
To apply the sensitivity results, we reformulate the nonsmooth inequality constraint  $\|y-\bar y\|\leq \delta$ as a smooth inequality constraint $\|y-\bar y\|^2\leq \delta^2$. Since  the inequality constraint $\|y-\bar y\|^2\leq \delta^2$ is inactive at $\bar y$,     MFCQ holds for the system $\varphi(\bar x,y)\in D, \|y-\bar y\|^2\leq \delta^2 $ at $\bar  y$ if and only if it  holds for the system $\varphi(\bar x,y)\in D$ at $\bar y$. Since the feasible region is uniformly bounded and MFCQ holds at $\bar y$ which is the only solution in $S_\delta(\bar x)$  by  \cite[Theorem 5.1]{Gauvin1982}, $V_\delta$ is Lipschitz continuous at $\bar x$.  By \cite[Theorem 4.2]{Bondarevsky16}, $V_\delta(x)$ is  twice directional differentiable at $\bar x$ and the lower bound follows.
\end{proof}

Now, we can give second-order optimality conditions for the minimax problem  (\ref{set-constrained}) with inequalities and equalities systems.

 \begin{theorem}[inequalities and equalities systems (second-order)]\label{second-dual}Assume that  $f,\phi, \varphi$ are twice continuously differentiable.         Let $(\bar x,\bar y) \in X \times  Y(\bar x)$, $C=\mathbb{R}^{p_1}_- \times \{0\}^{p_2}$ and $D=\mathbb{R}^{q_1}_- \times \{0\}^{q_2}$. Suppose that the MSCQ holds for the system $\phi(x)\in C$ at $\bar x$, and that MFCQ holds for the system $\varphi(\bar x,y)\in D$ at $\bar y$.
  \begin{itemize}
 \item[(a)] Suppose that  $(\bar x,\bar y)$ is a calm local minimax point to problem \eqref{set-constrained}. Then the following second-order necessary optimality conditions for the maximization hold:
 for any $h \in C_{\max}(\bar y;\bar x)$, there exists a multiplier $\beta \in \Lambda_{\max}(\bar y; \bar x)$ such that
\begin{equation*}
h^T \nabla^2_{yy} L_{\max}(\bar y,\beta;\bar x) h  \leq 0,
\end{equation*}
and for any $u \in C_{\min}(\bar x,\bar y)$, there exist $h \in C(\bar x,\bar y;u)$ { such that for any multiplier $\beta \in \Lambda^2_{\operatorname{max}}(\bar x,\bar y;u)$,  there exists $\alpha$ such that $(\alpha, \beta)\in \Lambda(\bar x,\bar y)$ and}
\begin{equation}\label{withh}
\nabla^2_{(x,y)} L(\bar x,\bar y,\alpha,\beta)((u,h),(u,h))  \geq 0.
\end{equation}

  \item[(b)] 
For any 
$ u\in \mathbb{L}_X(\bar x)\cap \{ u|\sup\limits_{h \in \mathbb{L}(\bar x,\bar y;u)} \nabla f(\bar x,\bar y)^T(u,h) =  0\} \setminus \{0\}, $  suppose that SSOSC$_u$ holds at $(\bar x,\bar y)$ and  there exists $h \in C(\bar x,\bar y;u)$ such that for any multiplier $\beta \in \Lambda^2_{\operatorname{max}}(\bar x,\bar y;u)$,  there exists $\alpha$ such that $(\alpha, \beta)\in \Lambda(\bar x,\bar y)$ and
\begin{equation}\label{withhsuf}
\nabla^2_{(x,y)} L(\bar x,\bar y,\alpha,\beta)((u,h),(u,h)) > 0.
\end{equation} 
Then $(\bar x,\bar y)$ is a calm local minimax point to problem \eqref{set-constrained} {with the second-order growth condition.
}
\end{itemize}
   \end{theorem}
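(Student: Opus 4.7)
The strategy is to specialize the abstract second-order conditions of Theorem \ref{main1} to the smooth polyhedral setting and to translate every second-subderivative object into Lagrangian form. Under MFCQ for $\varphi(\bar x,\cdot)\in D$ at $\bar y$, Proposition \ref{prop2.3}(i) yields the Robinson stability property, which by Proposition \ref{prop2.2} gives semidifferentiability and Lipschitz-likeness of $Y$ at $(\bar x,\bar y)$. Under MSCQ on $\phi$ and MFCQ on $\varphi$, the proof of Theorem \ref{set-first} records the normal-cone reductions $N_X(\bar x)=\nabla\phi(\bar x)^T N_C(\phi(\bar x))$ and $N_{\operatorname{gph} Y}(\bar x,\bar y)=\nabla\varphi(\bar x,\bar y)^T N_D(\varphi(\bar x,\bar y))$. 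Because $C$ and $D$ are polyhedral, the standard second-subderivative chain rule for preimages gives, on the appropriate critical cones,
$$ {\rm d}^2\delta_X(\bar x;\nabla\phi(\bar x)^T\alpha)(u)=\alpha^T\nabla^2\phi(\bar x)(u,u),\qquad {\rm d}^2\delta_{\operatorname{gph}Y}((\bar x,\bar y);\nabla\varphi(\bar x,\bar y)^T\beta)(u,h)=\beta^T\nabla^2\varphi(\bar x,\bar y)((u,h),(u,h)), $$
with the sigma terms vanishing by polyhedrality of $C$ and $D$. Strong duality (Lemma \ref{lemma4.5.1}) links $u\in C_{\min}(\bar x,\bar y)$ to the multiplier set $\Lambda^2_{\max}(\bar x,\bar y;u)$ of dual minimizers.

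For part (a), I apply Theorem \ref{main1}(a). The second-order necessary condition on the inner maximization $\max_{y\in Y(\bar x)}f(\bar x,y)$ is the classical KKT-type condition under MFCQ. For the joint condition \eqref{withh}, fix $u\in C_{\min}(\bar x,\bar y)$, choose $\beta\in\Lambda^2_{\max}(\bar x,\bar y;u)$ via strong duality, and set $v^u:=\nabla_x L_{\max}(\bar x,\bar y,\beta)$ and $\xi^u:=\nabla\varphi(\bar x,\bar y)^T\beta$. Then $\langle(v^u,0)+\xi^u,(u',h')\rangle=\nabla f(\bar x,\bar y)^T(u',h')$ by the $y$-stationarity; the first-order KKT condition (Corollary \ref{coro5.1}) supplies $\alpha\in N_C(\phi(\bar x))$ with $-v^u=\nabla\phi(\bar x)^T\alpha$ and $(\alpha,\beta)\in\Lambda(\bar x,\bar y)$; and $-v^u\in\{u\}^\perp$ holds because $\beta$ attains the dual optimum, which equals zero. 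Substituting the polyhedral second-subderivative formulas above into Theorem \ref{main1}(a) yields \eqref{withh} for some $h\in C(\bar x,\bar y;u)$.

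For part (b), I apply Theorem \ref{main1}(b). The first-order conditions are assumed, and SSOSC$_u$ implies the second-order sufficient condition for the inner max \eqref{sufflower}. The same construction of $v^u$ and $\xi^u$ as in (a), performed for each $\beta\in\Lambda^2_{\max}(\bar x,\bar y;u)$, combined with the polyhedral reductions, identifies the abstract strict inequality \eqref{cxlower3-non-new1} with $\nabla^2_{(x,y)}L(\bar x,\bar y,\alpha,\beta)((u,h),(u,h))>0$, which is exactly the hypothesis \eqref{withhsuf}. Strong twice epi-differentiability of $\delta_{\operatorname{gph}Y}$ at $(\bar x,\bar y)$ for every admissible multiplier direction $\xi^u$ holds because $D$ is polyhedral and MFCQ is in force: for any $t_k\downarrow 0,u_k\to u$ one constructs $h_k\to h$ with $\bar y+t_kh_k\in Y(\bar x+t_ku_k)$ realizing the second-subderivative limit, using RS together with the piecewise-linear structure of $N_D$. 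Theorem \ref{main1}(b) then delivers the calm local minimax property and the second-order growth condition.

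\textbf{Main obstacle.} The most delicate step is the $\beta$-to-$\alpha$ bookkeeping combined with the strong-twice-epi-differentiability verification in part (b). One must show that the flexibility allowed in the hypothesis, namely choosing $\alpha$ as a function of $\beta$, is enough to produce the strict inequality in \eqref{cxlower3-non-new1} uniformly across all $\beta\in\Lambda^2_{\max}(\bar x,\bar y;u)$, while simultaneously constructing companion sequences $h_k$ that are valid for the \emph{strong} (as opposed to plain) variant of twice epi-differentiability of $\delta_{\operatorname{gph}Y}$. Both steps rely essentially on the polyhedral structure of $C$ and $D$ together with MFCQ, which together make the required sigma-free reductions and uniform approximation property available.
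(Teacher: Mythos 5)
Your part (a) follows the paper's route essentially verbatim: reduce to Theorem \ref{main1}(a), use strong duality to pick $\tilde\beta\in\Lambda^2_{\max}(\bar x,\bar y;u)$ and a primal maximizer $\tilde h$, set $v^u=-\nabla\phi(\bar x)^T\tilde\alpha$ and $\xi^u=\nabla\varphi(\bar x,\bar y)^T\tilde\beta$, and convert the second subderivatives of $\delta_X$ and $\delta_{\operatorname{gph}Y}$ into the Lagrangian curvature terms via the polyhedral (sigma-free) formulas. That part is correct and matches the paper.

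Part (b) is where there is a genuine gap. You route the sufficiency through Theorem \ref{main1}(b), whose hypotheses include strong twice epi-differentiability of $\delta_{\operatorname{gph}Y}$ at $(\bar x,\bar y)$ for the multiplier directions $(\xi_1^u,\xi_2^u)$, and you dispose of this by asserting that it "holds because $D$ is polyhedral and MFCQ is in force," sketching only that one "constructs $h_k\to h$ ... using RS together with the piecewise-linear structure of $N_D$." This is precisely the step that cannot be waved away: the strong variant (Definition \ref{strongepi}) demands that for \emph{every} prescribed sequence $t_k\downarrow 0$, $u_k\to u$ one can find $h_k\to h$ with $\bar y+t_kh_k\in Y(\bar x+t_ku_k)$ for which the quotient $-2\langle(\xi_1^u,\xi_2^u),(u_k,h_k)\rangle/t_k$ attains the (already finite) second subderivative value; an RS-based metric projection only controls $\operatorname{dist}(\bar y+t_kh_k',Y(\bar x+t_ku_k))$ to first order and does not by itself guarantee that the correction is $o(t_k^2)$ in the direction of $(\xi_1^u,\xi_2^u)$, which is what the second-order quotient requires. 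The paper avoids this issue entirely: its proof of (b) invokes Corollary \ref{sufficient2}, which replaces strong twice epi-differentiability by twice directional differentiability of the localized value function $V_\delta$, and then Proposition \ref{V2diff1} supplies that differentiability (Lipschitz continuity via Gauvin--Dubeau under MFCQ, second-order directional differentiability and the lower bound $V''_{\delta}(\bar x;u)\geq \max_{h}\min_{\beta\in\Lambda^2_{\max}}\nabla^2_{(x,y)}L_{\max}(\cdots)$ via classical sensitivity results), after which \eqref{withhsuf} yields \eqref{cor4.1suf} because $\nabla^2_{(x,y)}L=\nabla^2_{(x,y)}L_{\max}+\alpha^T\nabla^2\phi(\bar x)(u,u)$ and the last term equals ${\rm d}^2\delta_X(\bar x;-v^u)(u)$. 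To repair your argument you would either have to actually prove strong twice epi-differentiability of $\delta_{\operatorname{gph}Y}$ in this setting (a nontrivial parabolic second-order approximation statement), or switch to the value-function route as the paper does. The reduction of SSOSC$_u$ to \eqref{sufflower} is fine, since $\Lambda^2_{\max}(\bar x,\bar y,u)\subseteq\Lambda_{\max}(\bar y;\bar x)$.
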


\begin{proof}

 (i) Since $f$ is twice continuously differentiable, it is obvious that $f$ is twice semidifferentiable. The mapping $Y$ is semidifferentiable at $\bar x$ for $\bar y$ and calm around $(\bar x,\bar y)$ by Proposition \ref{prop2.3}.

 (ii) We show that under the MSCQ and MFCQ assumptions, the nonemptyness of the multiplier set $\Lambda(\bar x,\bar y)$ is equivalent to the first-order optimality conditions \eqref{firstordercond11} and \eqref{firstordercond21}.


Suppose that $\Lambda(\bar x,\bar y)$ is nonempty. For any $(\alpha,\beta) \in \Lambda(\bar x,\bar y)$, with \cite[Proposition 4.2]{ABM21}, we have  
\begin{equation}\label{xnormal}
	-\nabla_x f(\bar x,\bar y)+\nabla_x \varphi(\bar x,\bar y)^T\beta=\nabla_x \phi(\bar x)^T\alpha \in N_{X}(\bar x).
	\end{equation}
By \eqref{normalcone-equ}, this is equivalent to 
\begin{align} \langle -\nabla_x f(\bar x,\bar y)+\nabla_x \varphi(\bar x,\bar y)^T\beta,u \rangle \leq 0 \quad  \text { for all } u \in T_{ X}(\bar x). \label{u2}
   \end{align}

By the linear programming duality theorem,
\begin{equation}\label{strongdua} \max\limits_{h \in \mathbb{L}(\bar x,\bar y;u)} \nabla f(\bar x,\bar y)^T(u,h) = \min\limits_{\beta \in \Lambda_{\max}(\bar y; \bar x)} \nabla_x f(x,y)^Tu - (\nabla_x \varphi(x,y)^T\beta) ^Tu.
	\end{equation}
and the maximum with respect to $h$ can be attained.

Thus, for each $u \in \mathbb{R}^n$, there exist $\tilde h \in \mathbb{L}(\bar x,\bar y;u)$ and $\tilde \beta \in \Lambda_{\max}(\bar y; \bar x)$ such that 
\begin{equation}\label{=}
-\nabla_y f(\bar x,\bar y)^T\tilde h =  (\nabla_x \varphi(\bar x,\bar y)^T\tilde \beta)^Tu.
\end{equation}
Since \eqref{u2} holds for any $\beta \in \Lambda_{\max}(\bar y; \bar x)$, we can plug $\tilde \beta$ into \eqref{u2} and then use the fact in \eqref{=}. Then, we have \eqref{firstordercond11}. Similarly, we can obtain \eqref{firstordercond21} using $\beta \in \Lambda_{\max}(\bar y; \bar x)$.

We still need to show that \eqref{firstordercond11} together with \eqref{firstordercond21} imply $\Lambda(\bar x,\bar y) \neq \emptyset$. This follows directly from the proof of Theorem \ref{set-first}.

(iii) We show that for any $u\in C_{\min}(\bar x; \bar y)$,  there exist $-v^u\in N_X(\bar x)\cap \{u\}^\perp, (\xi_1^u,\xi_2^u) \in N_{\operatorname{gph}Y}(\bar x,\bar y)$ such that $\langle (v^u,0)+(\xi_1^u,\xi_2^u), (u',h') \rangle =\nabla  f(\bar x,\bar y)^T(u',h')$ for any $(u',h') \in \mathbb{R}^n \times \mathbb{R}^m$,  and that the value ${\rm d}^2 \delta_X(\bar x;-v^u)(u)$ is finite and ${\rm d}^2 \delta_{\operatorname{gph}Y}((\bar x,\bar y);\nabla  f(\bar x,\bar y))(u,h)$ is finite for any $h \in C(\bar x,\bar y,u)$.

For each $u\in C_{\min}(\bar x; \bar y)$, let $\tilde{\beta}$ be a solution to the right-hand side of \eqref{strongdua}, $\tilde h$  be a solution to the left-hand side of \eqref{strongdua}, and $(\tilde \alpha, \tilde \beta )\in \Lambda(\bar x,\bar y)$. Then, 
\begin{equation}\label{1st-smooth}
	\nabla f(\bar x,\bar y) + (\nabla \phi (\bar x)^T \tilde \alpha,0) -\nabla \varphi(\bar x,\bar y)^T \tilde \beta =0,
\end{equation}
\begin{equation}\label{V1=0}
0=\nabla f(\bar x,\bar y)^T(u,\tilde h) =  \nabla_x f(x,y)^Tu -  (\nabla_x \varphi(\bar x,\bar y)^T\tilde \beta)^Tu,
\end{equation}
where \eqref{1st-smooth} holds by the definition of $\Lambda(\bar x,\bar y)$ and \eqref{V1=0} holds since $u\in C_{\min}(\bar x; \bar y)$.

Let $v^u:=-\nabla \phi(\bar{x})^T\tilde \alpha$, $(\xi_1^u,\xi_2^u):=(\nabla_x \varphi(\bar x,\bar y)^T\tilde \beta ,\nabla_y \varphi(\bar x,\bar y)^T\tilde \beta)$. Then,
$$-v^u \in \{u\}^{\perp}, \langle (\xi_1^u,\xi_2^u),(u,\tilde h) \rangle =0 \quad \mbox{ by } \eqref{1st-smooth},\eqref{V1=0},$$
$$-v^u=\nabla \phi(\bar{x})^T\tilde \alpha   \in N_{X}(\bar x)  \quad \mbox{ by } \eqref{xnormal},$$
$$(\xi_1^u,\xi_2^u) = \nabla \varphi(\bar x,\bar y)^T\tilde \beta \in N_{\operatorname{gph} Y}(\bar x,\bar y) \quad \mbox{ by } \eqref{normaly},$$
$$ \langle (v^u,0) +(\xi_1^u,\xi_2^u), (u',h') \rangle=\nabla f(\bar x,\bar y)^T(u',h') \ \forall u',h'    \quad \mbox{ by } \eqref{1st-smooth}.$$
Moreover, by \cite[Theorem 3.3]{ABM21}, the subderivatives are finite. 

(iv) Using \cite[Proposition 2.3, Proposition 2.5 (ii)(iii)]{Ma2023},
\begin{equation*}
\begin{aligned}
-{\rm d}^2 \delta_{Y(\bar x)}(\bar y;\nabla_yf(\bar x,\bar y))(h) & = & -\max_{\beta \in \Lambda_{\max}(\bar y; \bar x)} \{ \langle \beta, \nabla_{yy}^2 \varphi(\bar x,\bar y)(h,h) \rangle \} \\
& = &\min_{\beta \in \Lambda_{\max}(\bar y; \bar x)} \{- \langle \beta, \nabla_{yy}^2 \varphi(\bar x,\bar y)(h,h) \rangle \}.
\end{aligned}
\end{equation*}
For any $u$, we have, by \cite[Proposition 2.3, Proposition 2.7 (ii)(iii)]{Ma2023}, 
\begin{equation*}
{\rm d}^2 \delta_{X}(\bar x;\nabla \phi(\bar{x})^T\tilde \alpha)(u) = \langle \tilde \alpha, \nabla^2 \phi(\bar x)(u,u) \rangle ,
\end{equation*}
\begin{equation*}
-{\rm d}^2 \delta_{\operatorname{gph}Y}((\bar x,\bar y);(\nabla_x \varphi(\bar x,\bar y)^T\tilde \beta ,\nabla_y \varphi(\bar x,\bar y)^T\tilde \beta))(u,\tilde h) =-\langle \tilde \beta, \nabla^2 \varphi(\bar x,\bar y)(( u,\tilde h),(u,\tilde h)) \rangle .
\end{equation*}
Since $\tilde h \in \mathbb{L}(\bar x,\bar y;u)$ and $0=\nabla f(\bar x,\bar y)^T(u,\tilde h)$, we have $\tilde h \in C(\bar x,\bar y;u)$.

With all the above discussions and Theorem \ref{main1} (a), we have the necessary optimality conditions.

(v) 
 By Corollary \ref{sufficient2} and Proposition \ref{V2diff1}, we obtain the sufficient optimality condition.

\end{proof}

The proof for Theorem \ref{second-dual} shows that the second-order necessary condition with respect to $x$ can be equivalently stated as follows:
	for any $u \in C_{\min}(\bar x,\bar y)$, there exists $h \in C(\bar x,\bar y;u)$ such that, 
	for any multiplier $\beta \in \Lambda^2_{\operatorname{max}}(\bar x,\bar y;u)$, 
	there exists $\alpha$ with $(\alpha,\beta) \in \Lambda(\bar x,\bar y)$ and \eqref{withh} holds. 
	Thus, in Theorem \ref{second-dual}, we have established no-gap second-order sufficient and necessary optimality conditions.

We now compare our second-order optimality conditions with the one obtained by Dai and Zhang in \cite{DaiZh-2020}. The following result offers an intuitive characterization of the direction $h$ that arises in the optimality conditions.
\begin{corollary}   Let  $C=\mathbb{R}^{p_1}_- \times \{0\}^{p_2}$, $D=\mathbb{R}^{q_1}_- \times \{0\}^{q_2}$ and $(\bar x,\bar y) \in X \times  Y(\bar x)$.  Suppose that $\nabla_x f(\bar x,\bar y) \in \nabla \phi(\bar x)^T N_C(\phi(\bar x))$ and the MSCQ holds for the system $\phi(x)\in C$ at $\bar x$, that { MFCQ holds for the system $\varphi(\bar x,y)\in D$ at $\bar y$, and that for any $u \in C_{\min}(\bar x,\bar y)$, there exists a multiplier $\beta \in \Lambda^2_{\max}(\bar x,\bar y;u)$ such that $\nabla_{yy}^2 L_{\max}(\bar y,\beta;\bar x) \prec 0.$} Let $h^{*}$ be defined in \eqref{equa4.18}.
	\begin{itemize}
		\item[(i)] Let $\left(\bar x,\bar y\right)$ be a local minimax point of problem \eqref{set-constrained}. Then for any $u \in C_{\min}(\bar x,\bar y)$, there exists $\alpha$ such that $(\alpha, \beta)\in \Lambda(\bar x,\bar y)$ and
		\begin{equation}\label{withouthn}
			\nabla^2_{(x,y)} L(\bar x,\bar y,\alpha,\beta)((u,h^*),(u,h^*)) \geq 0.
		\end{equation}
		\item[(ii)] If for any $u \in C_{\min}(\bar x,\bar y) \setminus \{0\}$, there exists $\alpha$ such that $(\alpha, \beta)\in \Lambda(\bar x,\bar y)$ and
		\begin{equation}\label{withouths}
			\nabla^2_{(x,y)} L(\bar x,\bar y,\alpha,\beta)((u,h^*),(u,h^*)) > 0.
		\end{equation}
		Then, $(\bar x,\bar y)$ is a local minimax point to problem \eqref{set-constrained} with the second-order growth condition. 
	\end{itemize}
\end{corollary}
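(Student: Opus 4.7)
The plan is to deduce both statements from Theorem \ref{second-dual} by showing that, under the negative definiteness assumption on $\nabla_{yy}^2 L_{\max}$, the explicitly given direction $h^*$ from \eqref{equa4.18} is the essentially unique element of $C(\bar x,\bar y;u)$ that realizes the existential quantifier appearing in part (a), and is also an admissible choice for the strict inequality in part (b).

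First, I would verify that the hypotheses of Theorem \ref{second-dual} are in force. The MSCQ on $\phi$ at $\bar x$ and MFCQ on $\varphi(\bar x,\cdot)$ at $\bar y$ are assumed outright. The assumption $\nabla_x f(\bar x,\bar y)\in \nabla\phi(\bar x)^T N_C(\phi(\bar x))$ combined with the nonemptiness of $\Lambda_{\max}(\bar y;\bar x)$ (which follows from MFCQ once $\bar y$ satisfies the KKT conditions of the inner problem) implies via the proof of Theorem \ref{set-first} that $\Lambda(\bar x,\bar y)\neq\emptyset$, hence the first-order necessary optimality conditions \eqref{firstordercond11}--\eqref{firstordercond21} hold. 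The hypothesis that some $\beta\in\Lambda^2_{\max}(\bar x,\bar y;u)$ satisfies $\nabla_{yy}^2 L_{\max}(\bar y,\beta;\bar x)\prec 0$ immediately yields SSOSC$_u$, since $h^T\nabla_{yy}^2 L_{\max}(\bar y,\beta;\bar x)h<0$ for every nonzero $h$, in particular on $C_{\max}(\bar y;\bar x)\setminus\{0\}$.

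Next, I would analyze the structure of the critical direction. Under MFCQ and the negative definiteness of $\nabla_{yy}^2 L_{\max}(\bar y,\beta;\bar x)$ for some $\beta\in\Lambda^2_{\max}(\bar x,\bar y;u)$, a standard linearization of the inner KKT system in direction $u$ is uniquely solvable in $h$; the resulting $h^*$, recorded in \eqref{equa4.18}, lies in $\mathbb{L}(\bar x,\bar y;u)$ and attains the maximum in $\max_{h\in \mathbb{L}(\bar x,\bar y;u)}\nabla f(\bar x,\bar y)^T(u,h)$, by strong duality (Lemma \ref{lemma4.5.1}) and the strict concavity in $h$ induced by the negative-definite Hessian. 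Since the maximum equals $0$ whenever $u\in C_{\min}(\bar x,\bar y)$, we obtain $h^*\in C(\bar x,\bar y;u)$. For part (i), Theorem \ref{second-dual}(a) produces some $h\in C(\bar x,\bar y;u)$ satisfying \eqref{withh}; the uniqueness of the critical direction forces $h=h^*$, proving \eqref{withouthn}. For part (ii), the hypothesis \eqref{withouths} supplies the strict second-order inequality required by Theorem \ref{second-dual}(b) at the admissible choice $h=h^*\in C(\bar x,\bar y;u)$; invoking the sufficient part of Theorem \ref{second-dual} then delivers the calm local minimax property with the second-order growth condition.

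The main obstacle I foresee is verifying that $h^*$ is unambiguously the existential $h$ of Theorem \ref{second-dual}(a), since under MFCQ the multiplier set $\Lambda_{\max}(\bar y;\bar x)$ need not be a singleton. The argument must therefore use that the negative definite $\beta$ lies in $\Lambda^2_{\max}(\bar x,\bar y;u)$, that the linearization of the active KKT system at this $\beta$ pins down $h^*$, and that this $h^*$ is in fact independent of the representative $\beta\in\Lambda^2_{\max}(\bar x,\bar y;u)$ because any two such representatives give the same projection onto the range space of the active Jacobian. This independence, together with the strong duality identity $\nabla f(\bar x,\bar y)^T(u,h^*)=\nabla_x f(\bar x,\bar y)^T u-(\nabla_x\varphi(\bar x,\bar y)^T\beta)^T u=0$ on $C_{\min}(\bar x,\bar y)$, is what allows the abstract $h$ from Theorem \ref{second-dual} to be identified constructively with $h^*$, thereby converting the existence statement into the explicit form \eqref{withouthn}--\eqref{withouths}.
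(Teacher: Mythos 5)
Your overall strategy (reduce both parts to Theorem \ref{second-dual}, check SSOSC$_u$ from the negative-definiteness hypothesis, and show $h^*\in C(\bar x,\bar y;u)$) is the right one and matches the paper's, but there are two genuine gaps.

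First, the corollary is stated for \emph{local} minimax points while Theorem \ref{second-dual} is about \emph{calm} local minimax points. For part (i) you need to pass from the assumed local minimax point to a calm one before Theorem \ref{second-dual}(a) applies; the paper does this by citing \cite[Lemma 5.1]{Ma2023}, which asserts that the two notions coincide under the negative-definiteness assumption $\nabla_{yy}^2 L_{\max}(\bar y,\beta;\bar x)\prec 0$. Your proposal never addresses this mismatch (for part (ii) it is harmless since calm implies local, but for part (i) it is essential).

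Second, and more substantively, you mischaracterize $h^*$. You describe it as the solution of a linearized inner KKT system, the (essentially unique) maximizer of the \emph{linear} map $h\mapsto\nabla f(\bar x,\bar y)^T(u,h)$ over $\mathbb{L}(\bar x,\bar y;u)$, and you then argue that ``the uniqueness of the critical direction forces $h=h^*$.'' This fails: $C(\bar x,\bar y;u)$ is exactly the argmax set of that linear map on $\mathbb{L}(\bar x,\bar y;u)$ and is in general a nontrivial polyhedral set, so every one of its elements attains the first-order maximum and no uniqueness is available; moreover, negative definiteness of $\nabla_{yy}^2 L_{\max}$ cannot induce ``strict concavity'' of a function that is linear in $h$. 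The formula \eqref{equa4.18} visibly involves $\nabla_{yy}^2 L^{-1}$, $\nabla_{xy}^2 L$ and multipliers $\lambda,\eta,\zeta$ attached to the constraints defining $C(\bar x,\bar y;u)$: it is the KKT point of the \emph{second-order} problem $\max_{h\in C(\bar x,\bar y;u)}\nabla^2_{(x,y)}L(\bar x,\bar y,\alpha,\beta)((u,h),(u,h))$, whose maximum is attained precisely because $\nabla_{yy}^2 L\prec 0$ makes the quadratic strictly concave in $h$. With this identification, part (i) follows not from uniqueness but from maximality: Theorem \ref{second-dual}(a) supplies \emph{some} $h\in C(\bar x,\bar y;u)$ with $\nabla^2_{(x,y)}L((u,h),(u,h))\ge 0$, and since $h^*$ maximizes that quadratic over $C(\bar x,\bar y;u)$, the inequality transfers to $h^*$. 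Your treatment of part (ii) (using $h^*\in C(\bar x,\bar y;u)$ as the witness for the strict inequality in Theorem \ref{second-dual}(b)) is fine once $h^*$ is correctly identified.
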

\begin{proof}
By \cite[Lemma 5.1]{Ma2023}, a local minimax point is equivalent to a calm local minimax point under the negative definiteness assumption.  
{ Note that, under this assumption, SSOSC$_u$ holds at $(\bar x,\bar y)$ for any $u \in \mathbb{L}_X(\bar x) \cap \{u \mid \max_{h' \in \mathbb{L}(\bar x,\bar y;u)} {\rm d}f(\bar x,\bar y)(u,h') = 0\} \setminus \{0\}$.}

 Since $\nabla_{yy}^2 L(\bar x,\bar y,\alpha,\beta)=\nabla_{yy}^2 L_{\max}(\bar y,\beta;\bar x) \prec 0$ and  $C(\bar x,\bar y;u)$ is nonempty for any $u \in C_{\min}(\bar x,\bar y)$, the supremum
\begin{equation*}
	\max_{h\in C(\bar x,\bar y;u) }\nabla^2_{(x,y)} L(\bar x,\bar y,\alpha,\beta)((u,h),(u,h))
\end{equation*}
can be attained and the optimal point, by the KKT condition, is 
\begin{equation}\label{equa4.18}
	\begin{aligned}
		h^* &:= -\nabla_{yy}^2 L(\bar x,\bar y,\alpha,\beta)^{-1} 
		\Bigl[ \nabla_{xy}^2L(\bar x,\bar y,\alpha,\beta)^Tu  \\
		&\quad\quad\quad\quad\quad\quad\quad\quad\quad\quad\quad -\sum_{i=1}^{q_1}\lambda_i \nabla_y \varphi_i (\bar x,\bar y) 
		-\sum_{j=1}^{q_2} \eta_j  \nabla_y \varphi_j(\bar x,\bar y) 
		-\zeta \nabla_y f(\bar x,\bar y)\Bigr] \\
		&= -\nabla_{yy}^2 L(\bar x,\bar y,\alpha,\beta)^{-1} 
		\Bigl[\nabla_{xy}^2L(\bar x,\bar y,\alpha,\beta)^Tu \\
		&\quad\quad\quad\quad\quad\quad\quad\quad\quad\quad\quad -\sum_{i=1}^{q_1}(\lambda_i+\zeta \beta_i)\nabla_y \varphi_i (\bar x,\bar y) 
		-\sum_{j=1}^{q_2}(\eta_j +\zeta \beta_j)\nabla_y \varphi_j(\bar x,\bar y)\Bigr] \\
		&= -\nabla_{yy}^2 L(\bar x,\bar y,\alpha,\beta)^{-1}\nabla_{xy}^2L(\bar x,\bar y,\alpha,\beta)^Tu + \nabla_{yy}^2 L(\bar x,\bar y,\alpha,\beta)^{-1} \Xi \nabla_y \varphi (\bar x,\bar y),
	\end{aligned}
\end{equation}
where $\lambda \in \mathbb{R}^{q_1}_+, \eta \in \mathbb{R}^{q_2}, \zeta\in \mathbb{R}, \lambda_i \nabla \varphi_i(\bar x,\bar y) (u,h^*)=0,i=1,...,q_1$, and
$$\Xi:=(\lambda+\zeta \beta_1,\eta +\zeta \beta_2)\in \mathbb{R}^{q_1+q_2}.$$
Note that we used the fact $\nabla_y f(\bar x,\bar y)- \nabla_y \varphi(\bar x,\bar y)^T\beta=0$ to replace the term $\nabla_y f(\bar x,\bar y)$ in $h^*$.

\end{proof}

Plug \eqref{equa4.18} into $\nabla^2_{(x,y)} L(\bar x,\bar y,\alpha,\beta)((u,h^*),(u,h^*))$, we get
\begin{eqnarray*}
\begin{aligned}
&\nabla^2_{(x,y)} L(\bar x,\bar y,\alpha,\beta)((u,h^*),(u,h^*))\\
& =u^T [\nabla_{xx}^2 L -\nabla_{xy}^2 L(\nabla_{yy}^2 L)^{-1}\nabla_{yx}^2 L](\bar x,\bar y,\alpha,\beta) u \\
& - \Xi\nabla_y \varphi (\bar x,\bar y)^T\nabla_{yy}^2 L(\bar x,\bar y,\alpha,\beta)^{-1}\nabla_{yx}^2 L(\bar x,\bar y,\alpha,\beta)u\\
&+ \left[\Xi\nabla_y \varphi (\bar x,\bar y)^T + u^T \nabla_{xy}^2 L(\bar x,\bar y,\alpha,\beta) \right] \nabla_{yy}^2 L(\bar x,\bar y,\alpha,\beta)^{-1} \Xi \nabla_y \varphi (\bar x,\bar y) .
\end{aligned}
\end{eqnarray*}

Using the above representation, the second-order optimality conditions in \eqref{withh} and \eqref{withhsuf} can be reformulated without involving \(h\); see \eqref{withouthn} and \eqref{withouths}. This formulation is more explicit than that in \cite[Theorem 3.1, equation (3.7)  and Theorem 3.2]{DaiZh-2020}, as it avoids computing the inverse of the block matrix \(K(x)\) appearing therein.

{ In \cite[Corollary 5.1]{Ma2023}, for the decoupled constraint case, an upper estimate of   $\nabla^2_{(x,y)} L(\bar x,\bar y,\alpha,\beta)((u,h^*),(u,h^*))$ is obtained by relaxing the feasibility of \(h\) to the whole space. Since this only provides an upper bound, explicit second-order sufficient conditions without involving \(h\) are not derived there. In contrast, our result above provides an exact and explicit form of both the necessary and sufficient optimality conditions.}

\section{Concluding Remarks}\label{concl}
In this paper, we extend the concept of calm local minimax points to coupled-constrained minimax problems. We derive both first- and second-order necessary and sufficient optimality conditions for such points in the nonsmooth, nonconvex–nonconcave setting and in the smooth setting.

\end{document}